\documentclass[12pt]{article}%
\usepackage{amsfonts}
\usepackage{amsmath}
\usepackage{amssymb}
\usepackage{graphicx}%
\setcounter{MaxMatrixCols}{30}
\providecommand{\U}[1]{\protect\rule{.1in}{.1in}}
\textwidth=16.5cm
\textheight=22cm
\topmargin=-1cm
\oddsidemargin=0.3cm
\newtheorem{theorem}{Theorem}[section]

\newtheorem{corollary}[theorem]{Corollary}

\newtheorem{definition}[theorem]{Definition}
\newtheorem{example}[theorem]{Example}

\newtheorem{lemma}[theorem]{Lemma}

\newtheorem{proposition}[theorem]{Proposition}
\newtheorem{question}[theorem]{Question}
\newtheorem{remark}[theorem]{Remark}

\newenvironment{proof}[1][Proof]{\noindent\textbf{#1.} }{\ \rule{0.5em}{0.5em}}
\begin{document}

\title{Kernels of block Hankel operators and independency of vector-valued functions modulo Nevanlinna class}
\author{Dong-O Kang}
\maketitle

\begin{abstract}
For a  matrix-valued function $\Phi\in L^2_{M_{n\times m}}$, it is well-known that the kernel of a block Hankel operator $H_\Phi$ is an invariant subspace for the shift operator. Thus, if the kernel is nontrivial, then $\ker H_\Phi= \Theta H^2_{\mathbb C^r}$ for a natural number $r$ and an $m\times r$ matrix inner function $\Theta$ by Beurling-Lax-Halmos Theorem.
It will be shown that the size of the matrix inner function $\Theta$ associated with the kernel of a block Hankel operator $H_\Phi$ is closely related with a certain independency of the columns of $\Phi$, which is defined in this paper. As an important application of this result, the shape of shift invariant, or, backward shift invariant subspaces of $H^2_{\mathbb C^n}$ generated by finite elements will be studied.
\end{abstract}

\medskip
Keywords. block Hankel operator, kernel, inner function, independence modulo Nevanlinna class

\section{Introduction}

  Kernels of Toeplitz operators on the Hardy space have been studied since 1980's and some of the nice results have been generalized to the block Toeplitz case in 2010's (see \cite{BCD}, \cite{Ch}, \cite{CMP}, \cite{CP1}, \cite{CP2}, \cite{Dy1}, \cite{Dy2}, \cite{Ha}, \cite{Na} and \cite{Sa}). On the other hand, for Hankel operators, M. B. Abrahamse showed in 1976 that the kernel of a Hankel operator on the Hardy space is nontrivial if and only if its symbol function is the quotient of two analytic functions\cite{Ab}. But, there has not been enough research on kernels of block Hankel operators, which is the counterpart of block Toeplitz operators.

It is well known that kernels of block Hankel operators on vector-valued  Hardy space $H^2_{\mathbb C^n}$ are invariant for the shift operator and Beurling-Lax-Halmos Theorem says that an invariant subspace of $H^2_{\mathbb C^n}$ for the shift operator has the form $\Theta H^2_{\mathbb C^r}$ for some natural number $r$ and an $n\times r$ matrix inner function $\Theta$. The main purpose of this paper is to answer the following question:

\bigskip

For a block Hankel operator $H_\Phi$ with a matrix function $\Phi$, how is the size of the matrix inner function $\Theta$ such that $\ker H_\Phi=\Theta H^2_{\mathbb C^r}$ determined?

\bigskip

 We will give a proper answer to the above question and as applications, we will discuss finitely generated shift-invariant subspaces and backward shift-invariant subspaces. The shape of GCD(greatest common inner divisor) and LCM(least common multiple) of matrix inner functions will be discussed as well.
It turns out that the size of the inner function in the above question is determined by a certain independency of the column vectors of the symbol function $\Phi$, which is newly introduced in this paper.

Section 2 is the main part of the paper. \textit{Independency modulo Nevanlinna class} of a set of vector-valued $L^2$-functions on the unit circle will be defined and then it will be shown that the shape of the kernel of a block Hankel operator is closely related to this independency of the column vectors of the symbol matrix function.

In section 3, some results on the preservation of independency modulo Nevanlinna class under matrix multiplication is presented.

In section 4, some concrete examples will be given on kernels of block Hankel operators. These examples show the connection between the block Hankel kernels and independency of the column vectors of its symbol functions modulo Nevanlinna class.

In section 5, as interesting applications of the main result, the shape of finitely generated invariant subspaces of vector-valued Hardy spaces for the shift operator $S$ and its adjoint $S^*$ will be discussed.

\bigskip

Let $\mathbb D$ denote the unit disk in the complex plane and let $\mathbb T=\partial \mathbb D$ be the unit circle.
For $p >0$, $L^p\equiv L^p(\mathbb T)$ denotes the set of all measurable functions $f$ defined on $\mathbb T$ satisfying $$\int_{\mathbb T} |f|^p dm < \infty,$$ where $m$ is the normalized Lebesgue measure on the unit circle. For $p\ge 0$, $H^p\equiv  H^p(\mathbb T)$ denotes the set of all functions $f$ in $L^p$ such that $$\hat{f}(k):= \int_{\mathbb T} z^{-k}f d m =0 \hbox{ for } k < 0.$$ If $p=2$, then $L^2$ and $H^2$ are Hilbert spaces, where the inner product is defined by $$\langle \varphi, \psi \rangle:= \int_{\mathbb T}\varphi \overline \psi dm \hbox{ for } \varphi,\psi \in L^2.$$
For $p=\infty$, $L^\infty\equiv L^\infty(\mathbb T)$ is the set of all essentially bounded measurable functions on $\mathbb T$ and $H^\infty:=L^\infty \cap H^2$.
A function $\varphi$ on the unit circle is said to be of \textit{bounded type}, or, in the Nevanlinna class $\mathcal N$ if there are functions $f,g \in H^\infty$ such that $\varphi =\frac{f}{g}$.
Let's introduce several more notations.

\begin{itemize}
  \item $\mathcal N^p$ : the set of all bounded type functions in $L^p$, that is, $\mathcal N^p = \mathcal N \cap L^p$, where, $0 < p \le \infty$
  \item $L^p_{\mathbb{C}^n}$ and $H^p_{\mathbb{C}^n}$: direct sum of $n$ copies of $L^p$ and $H^p$, respectively
  \item $\mathcal N_{\mathbb C^n}$  and $\mathcal N^p_{\mathbb C^n}$ : direct sum of $n$ copies of $\mathcal N$ and $\mathcal N^p$, respectively
  \item $M_{n\times m}$ : the set of all $n\times m$ matrices with entries in $\mathbb C$
  \item  $L^p_{M_{n\times m}}$,$H^p_{M_{n\times m}}$ and $\mathcal N^p_{M_{n\times m}}$ : the set of all $n\times m$ matrix-valued functions each of whose entries is in $L^p$, $H^p$ and $\mathcal N^p$, respectively
\end{itemize}
   It is well-known that the Nevanlinna class $\mathcal N$ includes $H^p$ for each $p >0$ and each bounded type function $\frac{f}{g}\in L^p$ can be represented as $\frac{f}{g}= \frac{h}{\theta}$, where $\theta$ is an inner function and $h\in H^p$. A function $\varphi \in L^1(\mathbb T)$ defined on the unit circle $\mathbb T$ has a unique harmonic extension onto the unit disk $\mathbb D$. Hence, $\varphi \in L^1(\mathbb T)$  is considered as defined also on the unit disk $\mathbb D$ as its harmonic extension. Therefore, the evaluation of $\varphi \in L^1$ at a point $\alpha \in \mathbb D$ is naturally defined. It is well-known that the radial limit of $\varphi$ coincides with the function value on the unit circle, that is, $\lim_ {\lambda \rightarrow 1^-}\varphi(\lambda\zeta)= \varphi(\zeta)$ for almost all $\zeta \in \mathbb T.$
 In this paper, an element in $L^p_{\mathbb{C}^n}$ will be considered to be a column vector $\left(
                                             \begin{array}{c}
                                               a_1 \\
                                               \vdots \\
                                               a_n \\
                                             \end{array}
                                           \right)
 $ $(a_i \in L^p)$, and it will be denoted by $(a_1, \cdots, a_n)^t$ just to save space, where $(\cdot)^t$ denotes the transpose of a row vector or a matrix.
For a matrix function $\Phi \in L^p_{M_{n\times m}}$, its adjoint $\Phi^*\in L^p_{M_{m\times n}}$ is defined by $$\Phi^*(\zeta)= \Phi(\zeta)^* \hbox{ for } \zeta \in \mathbb T,$$ where, $\Phi(\zeta)^*\in M_{m\times n}$ denotes the adjoint matrix of $\Phi(\zeta) \in M_{n\times m}$. The identity matrix is denoted by $I$ and for $\varphi \in L^p$, $I_\varphi$ denotes the square matrix function whose diagonal entries are $\varphi$ and other entries are $0$. A good property $I_\varphi$ that we frequently make use of in this paper is \begin{equation}\label{commutativity of diagonal}
  I_\varphi A = A I_\varphi
\end{equation} for every $n\times m$ matrix function $A$. Remark in equation (\ref{commutativity of diagonal}) that $I_\varphi$ on the left hand side is an $n\times n$ matrix function and $I_\varphi$ on the right hand side is an $m \times m$ matrix function. $I_\varphi A$ will sometimes be denoted simply by $\varphi A$. A matrix-valued function $\Theta \in H^\infty_{M_{n\times m}}$ is called \textit{inner} if $$\Theta(z)^*\Theta(z)=I \hbox{ for almost all } z\in\mathbb{T},$$ in other words, $\Theta(z)$ is an isometry-valued function almost everywhere on $\mathbb T$. We note that for an $n\times m$ matrix function to be inner, $n \ge m$ should be satisfied. Let $\mathcal{P}_+(\mathbb C ^n)$ denote the set of all analytic polynomials in $H^2_{\mathbb C^n}$. A matrix-valued function $F\in H^2_{M_{n\times m}}$ is called \textit{outer} if $$\hbox{cl}[ F \mathcal{P}_+(\mathbb C^m)] = H^2_{\mathbb C^n}.$$ For an $n \times m$ matrix function to be outer, it is known that $m\ge n$.

To define block Hankel operators we need some preparation. For $\varphi \in L^1$, define the function $P\varphi$ on the unit disk $\mathbb D$ by
 \begin{equation}\label{def of projection}
   P\varphi(z):= \int_{\mathbb T}\frac{\varphi(\zeta)}{1-\overline \zeta z}dm(\zeta),
 \end{equation} for $z\in \mathbb D$, where $m$ is the normalized Lebesgue measure on the unit circle $\mathbb T$. The operator $P$ in (\ref{def of projection}) can be defined also for a vector-valued function $\varphi = \left(
                                                                                \begin{array}{c}
                                                                                  \varphi_1\\
                                                                                  \vdots\\
                                                                                  \varphi_n\\
                                                                                \end{array}
                                                                              \right) \in L^1_{\mathbb C^n}
 $ by the same equation. In this case, it can also be seen as coordinatewise action, that is, $$ P\varphi=\left(
                          \begin{array}{c}
                            P\varphi_1 \\
                            \vdots \\
                            P \varphi_n \\
                          \end{array}
                        \right).
 $$ Note that in (\ref{def of projection}), the function $P\varphi(z)$ was defined for $z\in \mathbb D$, but, it can also be understood as a function on the unit circle $\mathbb T$ using the radial limit process.
If $P$ is restricted to $L^2_{\mathbb C^n}$, then it is the orthogonal projection of $L^2_{\mathbb C^n}$ onto $H^2_{\mathbb C^n}$.

For a matrix-valued function $\Phi \in L^2_{M_{n\times m}}$, the block Hankel operator $H_\Phi$ is defined by
$$
H_\Phi f = J (I-P) (\Phi f)\quad (f\in H^2_{\mathbb{C}^m}),
$$
where the domain of $H_\Phi$ is $\mathcal {D} (H_\Phi)=\{f\in H^2_{\mathbb C^n}| (I-P) (\Phi f) \in \overline {zH^2_{\mathbb C^n}} \}$ and $J$
denotes the operator on $L^2_{\mathbb{C}^n}$ defined by $J(g)(z)= I_{\overline{z}}
g(\overline{z})$ for $g \in L^2_{\mathbb{C}^n}$. It is known that $\mathcal {D} (H_\Phi)$ is a dense subset of $H^2_{\mathbb C^n}$ containing $H^\infty_{\mathbb C^n}$. It is easy to check that if $\Phi \in  L^\infty_{\mathbb C^n}$, then $H_\Phi$ is a bounded operaor and is defined on the whole space $H^2_{\mathbb C^n}$, that is, $\mathcal {D} (H_\Phi)= H^2_{\mathbb C^n}$. If $\Phi$ is a function in $L^2_{M_{n\times m}}$ that is not essentially bounded, then $H_\Phi$ may possibly become an unbounded operator. It is known that the norm of $H_\Phi$ is determined by
$$
||H_\Phi||= \inf\{||\Phi + F||_{\infty}\} : F \in H^2_{M_{n\times m}}\},
$$
where $||\cdot||_{\infty}$ denotes the essential supremum of the matrix norms of the matrix-valued functions\cite{Pe}.

\bigskip

For an analytic matrix function $\Phi\in H^2_{M_{n\times r}}$,
we say that $\Delta\in H^\infty_{M_{n\times m}}$ is a {\it left inner
divisor} of $\Phi$ if $\Delta$
is an inner matrix function such that $\Phi=\Delta A$ for
some $A \in H^{2}_{M_{m\times r}}$ ($m\le n$).

A \textit{shift operator} $S$ on $ H^2_{\mathbb C^n} $ is defined by $$ Sf=I_z f,$$
for $f\in H^2_{\mathbb C^n}$. If we write $f=\left(
                                               \begin{array}{c}
                                                 f_1 \\
                                                 \vdots \\
                                                 f_n \\
                                               \end{array}
                                             \right),
$ where, $f_i\in H^2$, then $Sf = \left(
                                               \begin{array}{c}
                                                 zf_1 \\
                                                 \vdots \\
                                                 zf_n \\
                                               \end{array}
                                             \right)$.
Here, the natural number $n$ is called the multiplicity of the shift operator $S$.
\medskip

On the shape of invariant subspaces for the shift operator $S$, the following theorem is very well-known.
\medskip

\noindent {\bf Beurling-Lax-Halmos Theorem.}\label{beur}
\ \
{\it A
nonzero subspace $M$ of $H^2_{\mathbb C^n}$ is invariant for the
shift operator $S$ on $H^2_{\mathbb C^n}$ if and only if $M=\Theta
H^2_{\mathbb C^m}$, where $m$ is a natural number and $\Theta$ is an inner matrix function in
$H^{\infty}_{M_{n\times m}}\label{H^inftyMnm}$ \hbox{\rm ($m\le
n$)}. \  Furthermore, $\Theta$ is unique up to a unitary constant
right factor; that is, if $M=\Delta H^2_{\mathbb{C}^r}$ {\rm (}for
$\Delta$ an inner function in $H^{\infty}_{M_{n\times r}}${\rm )}, then
$m=r$ and $\Theta=\Delta W$, where $W$ is a unitary matrix mapping
$\mathbb C^m$ onto $\mathbb C^m$. \  }

\bigskip

 It is well known that for $\Phi\in L^2_{M_{n\times m}}$, the Hankel operator $H_{\Phi}$ satisfies
\begin{equation}\label{hankel equation}
H_\Phi S=S^*H_\Phi.
\end{equation}
Note in this equation that the multiplicity of $S$ is $m$ on the left hand side and $n$ on the right hand side.
From equation (\ref{hankel equation}), we find that the kernel
of a Hankel operator $H_\Phi$ is an invariant subspace for the
shift operator $S$ on $H^2_{\mathbb C^m}$.   Thus, if
$\hbox{ker}\,{H_{\Phi}} \ne (0)$, then by Beurling-Lax-Halmos
Theorem,
$$
\hbox{ker}\, H_\Phi=\Theta H^2_{\mathbb{C}^r}
$$
for some natural number $r \,(\,\le m\,)$ and an inner matrix function $\Theta \in H^2_{M_{m\times r}}$.  Our concern is how this natural number $r$ is determined by the matrix symbol function $\Phi$.

\medskip

\section{Independency modulo Nevanlinna class and shape of kernels of block Hankel operators}

In 1970s, M. B. Abrahamse showed \cite{Ab} that for $\varphi\in L^\infty$,
\begin{equation}\label{kernel of Hankel-scalar}
  \ker H_\varphi \ne (0) \Longleftrightarrow \varphi \hbox{ is of bounded type}.
\end{equation}
 In 2006, (\ref{kernel of Hankel-scalar}) was partially generalized to the case of matrix-valued symbol case in \cite{GHR}.
\begin{proposition}(\cite{GHR})
   Let $\Phi \in L^\infty_{M_{n\times n}}$, then $\ker H_\Phi =\Theta H^2_{\mathbb C^n}$ for a square matrix inner function $\Theta\in H^\infty_{M_{n\times n}}$ if and only if each entry function of $\Phi$ is of bounded type.
\end{proposition}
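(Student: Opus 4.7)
The plan is to work from the defining property that $f\in\ker H_\Phi$ if and only if $\Phi f\in H^2_{\mathbb C^n}$, which translates the inclusion $\Theta H^2_{\mathbb C^r}\subseteq\ker H_\Phi$ into the membership $\Phi\Theta\in H^\infty_{M_{n\times r}}$ (boundedness is automatic from $\Phi\in L^\infty$ and $\Theta$ inner).

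For the ``bounded type $\Rightarrow$ square inner'' direction I will begin by writing each entry as $\varphi_{ij}=\overline{\theta_{ij}}f_{ij}$ with $\theta_{ij}$ a scalar inner function and $f_{ij}\in H^\infty$, and then take $\theta$ to be the product (equivalently the least common inner multiple) of all the $\theta_{ij}$. Setting $\Theta_0:=I_\theta\in H^\infty_{M_{n\times n}}$, the commutativity relation (\ref{commutativity of diagonal}) yields $\Phi\Theta_0=\theta\Phi\in H^\infty_{M_{n\times n}}$, so $\Theta_0 H^2_{\mathbb C^n}\subseteq\ker H_\Phi$. Beurling--Lax--Halmos then gives $\ker H_\Phi=\tilde\Theta H^2_{\mathbb C^r}$ for some inner $\tilde\Theta\in H^\infty_{M_{n\times r}}$ with $r\le n$, and the inclusion $\Theta_0 H^2_{\mathbb C^n}\subseteq\tilde\Theta H^2_{\mathbb C^r}$, applied column-by-column to $\Theta_0=I_\theta$, produces a matrix $G\in H^2_{M_{r\times n}}$ with $\theta I_n=\tilde\Theta G$. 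Evaluating a.e.\ on $\mathbb T$, the left-hand side has rank $n$ (since $|\theta(\zeta)|=1$ there), while the right-hand side has rank at most $r$; this forces $r=n$.

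For the converse, I will assume $\ker H_\Phi=\Theta H^2_{\mathbb C^n}$ with $\Theta$ square inner and first read off $A:=\Phi\Theta\in H^\infty_{M_{n\times n}}$ from the kernel characterisation. Since $\Theta(\zeta)$ is an $n\times n$ isometry for a.e.\ $\zeta$ it is actually unitary a.e., so $\det\Theta$ is a scalar inner function and $\Theta^{-1}=\Theta^*$ on $\mathbb T$. Cramer's rule gives $\Theta^{-1}=(\det\Theta)^{-1}\mathrm{adj}(\Theta)$ with $\mathrm{adj}(\Theta)\in H^\infty_{M_{n\times n}}$, so every entry of $\Theta^{-1}$ is of bounded type. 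The identity $\Phi=A\Theta^{-1}$ together with the fact that $\mathcal N$ is closed under sums and products then yields that every entry of $\Phi$ is of bounded type.

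The main obstacle I anticipate is the dimension-counting step that forces $r=n$ in the first direction: one needs to convert the abstract inclusion $\Theta_0 H^2_{\mathbb C^n}\subseteq\tilde\Theta H^2_{\mathbb C^r}$ into a pointwise rank statement and then invoke $|\theta|=1$ a.e. Once this is secured, the rest of the proof is routine manipulation of inner matrix identities combined with the ring structure of $\mathcal N$.
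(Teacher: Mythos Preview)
The paper does not supply its own proof of this proposition; it is quoted from \cite{GHR} and serves as motivation for the general Theorem~\ref{ker of block Hankel}. Your argument is correct and self-contained: in the forward direction the identity $\theta I_n=\tilde\Theta G$ evaluated a.e.\ on $\mathbb T$ is precisely the rank obstruction needed to force $r=n$, and in the converse the Cramer formula $\Theta^{-1}=(\det\Theta)^{-1}\,\Theta_{adj}$ together with the closure of $\mathcal N$ under sums and products gives every entry of $\Phi=A\Theta^{-1}$ bounded type.

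It is worth noting that the paper's own machinery recovers this proposition a posteriori. By Theorem~\ref{ker of block Hankel}, for $\Phi\in L^\infty_{M_{n\times n}}$ one has $\ker H_\Phi=\Theta H^2_{\mathbb C^{n-r}}$ with $r=\hbox{ind}_{\mathcal N}\{\varphi_1,\dots,\varphi_n\}$, and the single-vector case of independence mod $\mathcal N$ shows that $r=0$ holds exactly when every column (hence every entry) of $\Phi$ lies in $\mathcal N$. Your direct argument is more elementary and tailored to the square case; the paper's route subsumes it into a uniform framework that also handles the non-square kernels.
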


\medskip
This proposition, together with Beurling-Lax-Halmos Theorem, implies that if $\Phi\in L^\infty_{M_{n\times n}}$ has at least one entry function that is not of bounded type and if $H_\Phi$ has a nontrivial kernel, then $\ker H_\Phi= \Theta H^2_{\mathbb C^m}$ for a nonsquare matrix inner function $\Theta \in  H^\infty_{M_{n\times m}}$.

As was mentioned in the introduction, the main result of this section, which also is the main result of the paper, shows that the size of the inner function $\Theta$ associated with the kernel of a block Hankel operator is determined by a certain independency of the columns of the symbol function. For that purpose, ``independence modulo Nevanlinna class'' will be defined and some of its basic properties will be derived in this section.
For $\Phi \in L^2_{M_{n\times m}}$, define $\hbox{Rank} \Phi$ to be the essential supremum of the rank of $\Phi(\zeta)$, where, $\zeta \in \mathbb T$, that is,
$$
\hbox{Rank}\,\Phi:=\hbox{ess}\sup_{\zeta \in \mathbb
T}\,\hbox{rank}\Phi(\zeta),
$$
where, $\hbox{rank}\Phi(\zeta)$ denotes the rank of $\Phi(\zeta)$ as a matrix, that is, $\hbox{rank}\Phi(\zeta):=\hbox{dim} (\Phi(\zeta)\mathbb
C^m).$
\begin{lemma}\label{size of inner function}
  For a nontrivial shift-invariant subspace $M$ of $H^2_{\mathbb C^n}$, let $m$ be defined by
   $$m= \max\{\hbox{Rank} [f_1, \cdots , f_r] : f_i \in M \hbox{ and } r \in \mathbb N \}.$$ Then $M= \Theta H^2_{\mathbb C^m}$ for some $n \times m$ matrix-valued inner function $\Theta \in H^\infty_{M_{n\times m}}$.
\end{lemma}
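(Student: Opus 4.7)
The plan is to reduce everything to a direct rank comparison via the Beurling–Lax–Halmos theorem. Since $M$ is a nontrivial shift-invariant subspace of $H^2_{\mathbb C^n}$, there exists a natural number $k$ and an inner matrix function $\Theta\in H^\infty_{M_{n\times k}}$ such that $M=\Theta H^2_{\mathbb C^k}$. My goal is to show that this Beurling–Lax–Halmos index $k$ coincides with the number $m$ defined by the rank-maximum in the statement.

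First I would set up the comparison as follows. Any finite collection $f_1,\dots,f_r\in M$ can be written uniquely as $f_i=\Theta g_i$ with $g_i\in H^2_{\mathbb C^k}$; concatenating columns gives
\begin{equation*}
[f_1,\dots,f_r](\zeta)=\Theta(\zeta)\,[g_1,\dots,g_r](\zeta)\qquad\text{for a.e.\ }\zeta\in\mathbb T.
\end{equation*}
Because $\Theta$ is inner, $\Theta(\zeta)^*\Theta(\zeta)=I$ a.e., so $\Theta(\zeta)$ is an isometry from $\mathbb C^k$ into $\mathbb C^n$, hence injective. Therefore
\begin{equation*}
\mathrm{rank}\,[f_1,\dots,f_r](\zeta)=\mathrm{rank}\,[g_1,\dots,g_r](\zeta)\qquad\text{a.e.},
\end{equation*}
and passing to the essential supremum yields $\mathrm{Rank}\,[f_1,\dots,f_r]=\mathrm{Rank}\,[g_1,\dots,g_r]$. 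Since each $g_i(\zeta)\in\mathbb C^k$, this rank is bounded by $k$; consequently $m\le k$.

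For the reverse inequality I would exhibit an explicit family attaining rank $k$. Take $g_i=e_i$, the $i$-th standard basis vector in $\mathbb C^k$, for $i=1,\dots,k$, and set $f_i:=\Theta e_i$, which is the $i$-th column of $\Theta$. Then $f_i\in M$, and $[g_1,\dots,g_k]$ is the constant $k\times k$ identity matrix, which has rank $k$ everywhere. By the rank-preservation observation above, $\mathrm{Rank}\,[f_1,\dots,f_k]=k$, so $m\ge k$. Combining the two inequalities gives $m=k$, which is exactly the desired conclusion $M=\Theta H^2_{\mathbb C^m}$.

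I do not expect any genuine obstacle: the whole argument rests on the pointwise-a.e.\ isometry property of inner functions, together with the existence theorem of Beurling–Lax–Halmos. The only small point worth stating carefully is the injectivity of $\Theta(\zeta)$ a.e., which is what legitimizes the pointwise rank equality between $[f_1,\dots,f_r]$ and $[g_1,\dots,g_r]$ and therefore converts the Beurling–Lax–Halmos index into the intrinsic rank invariant $m$ defined in the lemma.
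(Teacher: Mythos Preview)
Your proof is correct and follows essentially the same approach as the paper: invoke Beurling--Lax--Halmos to get $M=\Theta H^2_{\mathbb C^k}$, use the pointwise rank bound $\mathrm{rank}(\Theta(\zeta)G(\zeta))\le k$ to obtain $m\le k$, and exhibit the columns of $\Theta$ itself to get $m\ge k$. Your use of the a.e.\ injectivity of $\Theta(\zeta)$ to get exact rank equality is slightly sharper than the paper's plain inequality, but the argument is otherwise identical.
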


\begin{proof}
By Beurling-Lax-Halmos Theorem, there exist a natural number $l(\leq n)$ and a matrix-valued inner function $\Theta \in H^\infty_{M_{n\times l}}$ such that $M= \Theta H^2_l.$ We need to prove $l=m$. Let $F:=[f_1, \cdots , f_r]$. Since $\hbox{Rank}F$ cannot exceed $n$, it suffices to consider the case $r=n$.
Let $F\in H^2_{M_{n\times n}}$ be a matrix-valued function all of whose columns lie in $M.$ Clearly, $F=\Theta G$ with $G\in H^2_{M_{l\times n}}$. Let $g_j$ be the $j$-th column of $G$. Consider the case where $g_j= e_j$ for $j=1,\cdots,l$ and $g_j=0$ for $ l < j \le n$, then we may write $F=[\Theta, 0]$, hence, $\hbox{Rank}F=l$. Thus, we have $m \ge l$. On the other hand, $m \le l$ is obvious that because for each $\zeta \in \mathbb T$, $\hbox{rank}F(\zeta)=\hbox{rank}(\Theta(\zeta)G(\zeta))\le \hbox{rank} \Theta(\zeta) =l$.
\end{proof}

\bigskip

The following definition will play the key role throughout the paper.

\begin{definition}
  A finite set of vector-valued functions $\{\varphi_1,\varphi_2,\cdots, \varphi_r \} \subset L^2_{\mathbb C ^n}$ is defined to be ``independent modulo Nevanlinna class'' if for bounded type functions $a_1,\cdots, a_r$ in $\mathcal{N}$, $a_1\varphi_1 + \cdots+ a_r\varphi_r\in \mathcal{N}_{\mathbb C^n}$ implies $a_1=\cdots=a_r=0$.
\end{definition}
We will also say that the vector-valued functions $\varphi_1,\cdots, \varphi_r $ are independent modulo Nevanlinna class when the set $\{\varphi_1,\cdots, \varphi_r \} $ is.

\bigskip
The next proposition will be used frequently in the paper, often, without mentioning.
\begin{proposition}\label{equivalent cond of ind mod N}
$\varphi_1, \cdots, \varphi_r\in L^2_{\mathbb C^n}$ are independent modulo Nevanlinna class if and only if for analytic functions $f_1, f_2, \cdots, f_r \in H^2$, $f_1\varphi_1+f_2\varphi_2+\cdots+f_r\varphi_r \in H^1_{\mathbb C^n}$ implies $f_1=\cdots=f_r=0$, in other words, $\ker H_\Phi= (0)$ where $\Phi=[\varphi_1,\cdots,\varphi_r]\in L^2_{M_{n \times r}}$.
\end{proposition}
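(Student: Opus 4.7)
The plan is to separate the proposition into two pieces: the ``in other words'' clause, which rephrases $\ker H_\Phi = (0)$ in terms of $H^2$-coefficients, and the main equivalence with independence modulo Nevanlinna class. For the kernel identification, $J$ is injective on $L^2_{\mathbb C^n}$, so $H_\Phi f = 0$ is equivalent to $(I-P)(\Phi f) = 0$. For $f = (f_1,\ldots,f_r)^t \in H^2_{\mathbb C^r}$ the vector $\Phi f = \sum_i f_i\varphi_i$ lies in $L^1_{\mathbb C^n}$ by Cauchy--Schwarz, and $(I-P)(\Phi f) = 0$ just says that $\Phi f$ equals its Cauchy projection, i.e., $\Phi f \in H^1_{\mathbb C^n}$. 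Conversely, whenever $\Phi f \in H^1_{\mathbb C^n}$ for some $f\in H^2_{\mathbb C^r}$, the containment $(I-P)(\Phi f) = 0 \in \overline{zH^2_{\mathbb C^n}}$ is automatic, so $f \in \mathcal{D}(H_\Phi)$ and $H_\Phi f = 0$. Thus $\ker H_\Phi = (0)$ is literally the $H^2$-coefficient condition in the statement.

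The forward direction of the main equivalence is immediate: since $H^p \subset \mathcal N$ for every $p > 0$ (noted earlier in the excerpt), if $f_i \in H^2$ satisfy $\sum f_i\varphi_i \in H^1_{\mathbb C^n}$, then the $f_i$ are bounded type and the sum lies in $\mathcal N_{\mathbb C^n}$, so independence modulo $\mathcal N$ forces $f_i = 0$. For the converse, suppose $a_i \in \mathcal N$ with $\sum_i a_i\varphi_i \in \mathcal N_{\mathbb C^n}$. Write $a_i = f_i/g_i$ with $f_i, g_i \in H^\infty$ and the $j$-th component of the sum as $p_j/q_j$ with $p_j, q_j \in H^\infty$. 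Setting $G := g_1 \cdots g_r$ and $Q := q_1 \cdots q_n$, both in $H^\infty$ and nonvanishing a.e.\ on $\mathbb T$, the functions $F_i := GQ\,a_i = Q f_i \prod_{j\ne i} g_j$ lie in $H^\infty$, and
\[
F_1\varphi_1 + \cdots + F_r\varphi_r \;=\; GQ\sum_{i} a_i\varphi_i \;\in\; H^\infty_{\mathbb C^n} \subset H^1_{\mathbb C^n}.
\]
Applying the $H^2$-coefficient condition to the $F_i \in H^\infty \subset H^2$ yields $F_i = 0$, and dividing by the a.e.\ nonvanishing $GQ$ gives $a_i = 0$.

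The only genuine obstacle is bridging the gap between Nevanlinna-class and $H^2$ coefficients: a general Nevanlinna function $a_i$ need not lie in $L^2$, nor need $\sum a_i\varphi_i$ lie in $L^2_{\mathbb C^n}$, so one cannot invoke the $H^2$-coefficient hypothesis directly. The standard device of multiplying through by the product of the $H^\infty$-denominators appearing in the bounded-type representations clears denominators and brings everything into $H^\infty$ without losing information, because nonzero $H^\infty$ functions (and hence their finite product) are nonzero almost everywhere on $\mathbb T$.
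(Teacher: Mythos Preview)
Your proof is correct and follows essentially the same approach as the paper: the forward direction uses $H^p \subset \mathcal N$, and the converse clears denominators by multiplying through by the product of the $H^\infty$-denominators appearing in the bounded-type representations, then applies the $H^2$-coefficient hypothesis. Your version is in fact slightly more careful than the paper's in handling the vector-valued right-hand side (separating the component denominators $q_j$) and in explicitly justifying the ``in other words'' identification with $\ker H_\Phi$.
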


\begin{proof}
Necessity($\Longrightarrow$) is immediate from the definition above because $H^2\subset \mathcal N$ and $H^1_{\mathbb C^n}\subset \mathcal N_{\mathbb C^n}$.

To show sufficiency($\Longleftarrow$), assume that $f_1\varphi_1+f_2\varphi_2+\cdots+f_r\varphi_r \in H^1_{\mathbb C^n}$ implies $f_1=\cdots=f_r=0$ for analytic functions $f_1, f_2, \cdots, f_r \in H^2$.
Now suppose
\begin{equation}\label{equi condition for indp 2}
  a_1\varphi_1 + \cdots+ a_r\varphi_r= b,
\end{equation}
where $a_i, b\in \mathcal N$.
Since $a_i, b$ are of bounded type, we may write $a_i=\frac{h_i}{g_i}$ and $b=\frac{h}{g}$, where, $g_i,g,h_i,h \in H^\infty$ and $g_i,g\ne 0$.
Multiplying equation (\ref{equi condition for indp 2}) by $g\prod_{j=1}^r g_j$, we have

\begin{equation}\label{equi condition for indp 3}
  \frac {h_1 g\prod_{j=1}^r g_j}{g_1} \varphi_1 + \cdots +  \frac {h_r g \prod_{j=1}^r g_j}{g_r}\varphi_r= h\prod_{j=1}^r g_j.
\end{equation}
Note that $\frac {h_i g\prod_{j=1}^r g_j}{g_i}\in H^\infty\subset H^2$ for each $i=1,\cdots, r$. By assumption, we have $\frac {h_i g\prod_{j=1}^r g_j}{g_i}=0$ for each $i$, which implies $h_i=0$ because $g_i$ and $g$ are nonzero analytic functions. Therefore, $a_i=0$ for each $i$, which implies $\varphi_1,\cdots, \varphi_r$ are independent modulo Nevanlinna class.
\end{proof}

\bigskip

Throughout this paper, independence of a set of vector-valued functions denotes independence modulo Nevanlinna class if not specified otherwise and the phrase ``independent modulo Nevanlinn class'' will often be written as ``independent mod $\mathcal N$'' for simplicity.
For an arbitrary set $A$ of functions in $L^2_{\mathbb C^n}$, we define the \textit{degree of independence}, or, \textit{independency} of $A$ modulo Nevanlinna class, which will be denoted by ``$\hbox{ind}_{\mathcal N} A$'', by
$$\hbox{ind}_{\mathcal N} A := \sup \{|B|: B \hbox{ is a finite subset of } A \hbox{ that is independent mod } \mathcal N\},$$
where $|B|$ denotes the order of the set $B$.  Note that it is possible to have $\hbox{ind}_{\mathcal N} A =\infty$ if $A$ is an infinite set. If $A$ has no independent subset mod $\mathcal N$, then we naturally define $\hbox{ind}_{\mathcal N} A =0$.

\bigskip

Using Proposition \ref{equivalent cond of ind mod N}, it is easily verified that if $b_1,\cdots, b_r \in \mathcal N^\infty$ are nonzero functions and $\varphi_1, \cdots, \varphi_r\in L^2_{\mathbb C^n}$, then
\begin{equation}\label{independency multiplied by diagonal matrix}\varphi_1, \cdots, \varphi_r \hbox{ are independent mod }\mathcal N \Longleftrightarrow b_1\varphi_1, \cdots, b_r \varphi_r \hbox{ are independent mod } \mathcal N.
\end{equation}

 In fact, more is true. In the proof of the following proposition, $A_{adj}$ denotes the classical adjoint of the matrix $A$. As was mentioned before, Proposition \ref{equivalent cond of ind mod N} will be used often in the sequel, without mentioning.

\begin{proposition}\label{preservation of independence}
  Let $A$ be an $r\times r$ matrix-valued function with entries in $\mathcal N^\infty$ such that $\det A \ne 0$ and $\Phi= [\varphi_1 \quad \cdots \quad \varphi_r] \in L^2_{M_{m \times r}}$. Let $\psi_i$ denote the $i$-th column of $\Psi:= \Phi A$, then $\varphi_1, \cdots, \varphi_r$ are independent modulo Nevanlinna class if and only if $\psi_1, \cdots, \psi_r$ are independent modulo Nevanlinna class.
\end{proposition}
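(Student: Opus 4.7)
The plan is to use the adjugate/inverse formula together with the fact that $\mathcal N$ is closed under addition, multiplication, and (for nonzero elements) reciprocation, so $A^{-1}=\frac{1}{\det A}A_{adj}$ is again a matrix with entries in $\mathcal N$. The equivalence will then follow from a single symmetric argument applied once with $A$ and once with $A^{-1}$, invoking Proposition~\ref{equivalent cond of ind mod N} (or rather the definition directly) and the fact that $\det A\ne 0$ means $\det A$ vanishes only on a null set of $\mathbb T$, so the system $Ab=0$ on $\mathbb T$ forces $b=0$ a.e.

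For the forward direction, assume $\varphi_1,\ldots,\varphi_r$ are independent mod $\mathcal N$, and suppose $b_1\psi_1+\cdots+b_r\psi_r \in \mathcal N_{\mathbb C^m}$ for some $b_i\in\mathcal N$. Setting $b=(b_1,\ldots,b_r)^t$, this is $\Psi b=\Phi(Ab)\in\mathcal N_{\mathbb C^m}$. The components $c_i$ of $Ab$ are finite sums of products of entries of $A$ (in $\mathcal N^\infty$) with the $b_j$ (in $\mathcal N$), so $c_i\in\mathcal N$. Independence of $\varphi_1,\ldots,\varphi_r$ mod $\mathcal N$ then forces $c_i=0$ for every $i$, i.e.\ $Ab=0$ pointwise a.e. Since $\det A$ is a nonzero bounded-type function, its zero set has measure zero, so $A(\zeta)$ is invertible for almost every $\zeta\in\mathbb T$ and $b=0$ a.e., as required.

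For the converse, assume $\psi_1,\ldots,\psi_r$ are independent mod $\mathcal N$. Write $A^{-1}=\frac{1}{\det A}A_{adj}$; the entries of $A_{adj}$ are polynomials in entries of $A$, hence lie in $\mathcal N^\infty\subset\mathcal N$, and $\frac{1}{\det A}\in\mathcal N$, so every entry of $A^{-1}$ belongs to $\mathcal N$. Because $\Phi=\Psi A^{-1}$, the very same argument with $(\Psi,A^{-1})$ in place of $(\Phi,A)$ yields that $\varphi_1,\ldots,\varphi_r$ are independent mod $\mathcal N$: given $\sum_i a_i\varphi_i\in\mathcal N_{\mathbb C^m}$ with $a_i\in\mathcal N$, setting $d=A^{-1}a$ we get $\sum_j d_j\psi_j\in\mathcal N_{\mathbb C^m}$ with $d_j\in\mathcal N$, hence $d=0$, and then $a=Ad=0$.

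The only delicate point in the whole argument is justifying that $A^{-1}$ has entries in $\mathcal N$: this is where the assumption $\det A\ne 0$ and the fact that the Nevanlinna class is closed under reciprocation of nonzero elements (and that $A_{adj}$ has entries in $\mathcal N^\infty$ by the polynomial adjugate formula) are both essential. Once this is in place, the rest is routine matrix bookkeeping combined with the zero-set-of-measure-zero property of nonzero bounded-type functions.
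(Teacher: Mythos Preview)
Your proof is correct and follows essentially the same approach as the paper: both directions reduce to showing that if $\Phi(Ab)\in\mathcal N_{\mathbb C^m}$ with $b\in\mathcal N_{\mathbb C^r}$, then $Ab\in\mathcal N_{\mathbb C^r}$ so independence of the $\varphi_i$ forces $Ab=0$, whence $b=0$ since $\det A\ne 0$ a.e.\ on $\mathbb T$. The only cosmetic difference is that for the converse the paper multiplies by $A_{adj}$ (keeping entries in $\mathcal N^\infty$ and picking up a harmless factor $\det A$), whereas you invoke $A^{-1}=\frac{1}{\det A}A_{adj}$ with entries in $\mathcal N$ and run the forward argument symmetrically; since the definition of independence mod $\mathcal N$ only requires coefficients in $\mathcal N$, your symmetric version works just as well.
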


\begin{proof}
 To prove necessity($\Longrightarrow$), assume that the columns of $\Phi$ are independent modulo Nevanlinna class. Suppose now, $a_1\psi_1+\cdots+a_r \psi_r\in \mathcal N^1_{\mathbb C^m}$, or, equivalently, $\Phi A a \in \mathcal N^1_{\mathbb C^m}$, where $a:=(a_1 , \cdots , a_r)^t$. Write $Aa:=b =(b_1, \cdots, b_r)^t
  \in L^2_{\mathbb C^r}$, then it is clear that $b_i \in \mathcal N^2$ for each $i$ and
  $$
  b_1\varphi_1+\cdots+b_r \varphi_r\in \mathcal N^1_{\mathbb C^m}\, (\hbox { from }\Phi A a \in \mathcal N^1_{\mathbb C^m}).
  $$
 Since $\varphi_i$ are independent mod $\mathcal N$, we have $b_i=0$ for each $i=1,\cdots,r$.
   On the other hand, since each entry function of $A$ is in $\mathcal N^\infty$, it is clear that $\det A = \frac{h}{g}$ for some $g,h\in H^\infty$. Thus, $\det A \ne 0$ implies $\det A(\zeta)\ne0$ almost everywhere on $\mathbb T$. Therefore, $Aa=b=0$ implies $a_i=0$ for each $i$, which implies $\psi_i$ are independent modulo Nevanlinna class.

 To prove sufficiency($\Longleftarrow$), assume $\psi_1,\cdots, \psi_r$ are independent mod $\mathcal N$. Now suppose $a_1 \varphi_1 + \cdots + a_r \varphi_r \in \mathcal N^1_{\mathbb C^m}$ for $a_i \in \mathcal N^2$, or, equivalently, $\Phi a \in \mathcal N^1_{\mathbb C^m}$ where, $a=(a_1,\cdots, a_r)^t$. Let $ a^\prime=(a_1^\prime,\cdots, a_r^\prime)^t:= A_{adj}a$, then it is clear that each $a_i^\prime \in \mathcal N^2$. Since $(\det A) \in N^\infty$, $\Phi a \in \mathcal N^1_{\mathbb C^m}$ implies $$ \Psi a^\prime=\Phi A A_{adj}a = \Phi I_{(\det A) }a = (\det A)\Phi a \in \mathcal N^1_{\mathbb C^m}.$$
 Since each $a_i^\prime \in \mathcal N^2$, independence of $\{\psi_1,\cdots, \psi_r\}$ (mod $\mathcal N$) implies $a^\prime_1=\cdots= a^\prime_r= 0$, i.e., $A_{adj}a=0$. Observing that $\det A_{adj}=(\det A)^{r-1}\ne0$ almost everywhere on $\mathbb T$, we find $a=0$, that is, $a_1=\cdots=a_r=0$. Thus, we conclude that $\varphi_1,\cdots,\varphi_r$ are independent mod $\mathcal N$.
\end{proof}

\bigskip

\begin{proposition}\label{preservation of independence under left multiplication}
  Let $A$ be an $m\times m$ matrix-valued function with entries in $\mathcal N^\infty$ such that $\det A \ne 0$ and $\Phi= [\varphi_1, \cdots, \varphi_r] \in L^2_{M_{m \times r}}$. Let $\psi_i$ denote the $i$-th column of $\Psi:= A \Phi$, then $\varphi_1, \cdots, \varphi_r$ are independent modulo Nevanlinna class if and only if $\psi_1, \cdots, \psi_r$ are independent modulo Nevanlinna class.
\end{proposition}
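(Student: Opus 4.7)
The plan is to mirror the argument used for Proposition~\ref{preservation of independence}, exploiting the fact that left multiplication by $A$ converts the column $\varphi_i$ into $\psi_i = A\varphi_i$. Consequently, for any scalar functions $a_1,\dots,a_r$,
$$
\sum_{i=1}^r a_i\,\psi_i \;=\; A\Bigl(\sum_{i=1}^r a_i\,\varphi_i\Bigr),
$$
so a linear combination of the $\psi_i$ is nothing but $A$ applied to the corresponding combination of the $\varphi_i$. This single identity drives both directions, after one observes the absorbing properties $\mathcal N^\infty\cdot \mathcal N^p\subset \mathcal N^p$ for $p=1,2$ (bounded type times bounded type is bounded type, and pointwise multiplication by an $L^\infty$ function preserves $L^p$).

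For sufficiency ($\Longleftarrow$), I assume $\psi_1,\dots,\psi_r$ are independent mod $\mathcal N$ and suppose $\sum a_i\varphi_i\in \mathcal N^1_{\mathbb C^m}$ for some $a_i\in\mathcal N^2$. Since the entries of $A$ lie in $\mathcal N^\infty$, left multiplication by $A$ maps $\mathcal N^1_{\mathbb C^m}$ into itself, so $\sum a_i\psi_i = A(\sum a_i\varphi_i)\in \mathcal N^1_{\mathbb C^m}$, and independence of $\{\psi_i\}$ forces $a_i=0$. Note that this direction does not even require $\det A\ne 0$.

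For necessity ($\Longrightarrow$), I use the classical adjoint $A_{\mathrm{adj}}$, whose entries (cofactors of $A$) again lie in $\mathcal N^\infty$. Assuming $\sum a_i\psi_i\in\mathcal N^1_{\mathbb C^m}$ with $a_i\in\mathcal N^2$, I left-multiply by $A_{\mathrm{adj}}$ to obtain
$$
A_{\mathrm{adj}}\Bigl(\sum a_i\psi_i\Bigr) \;=\; A_{\mathrm{adj}}A\Bigl(\sum a_i\varphi_i\Bigr) \;=\; (\det A)\sum a_i\varphi_i \;\in\; \mathcal N^1_{\mathbb C^m}.
$$
Setting $b_i:=(\det A)\,a_i\in\mathcal N^2$, the hypothesized independence of $\{\varphi_i\}$ gives $b_i=0$ for each $i$. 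Finally, since $\det A\in\mathcal N^\infty$ is nonzero as a function, it is nonzero almost everywhere on $\mathbb T$ (a bounded type function $h/g$ with $h\not\equiv 0$ vanishes only on a null set), so $a_i=0$ for each $i$, as required.

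I do not anticipate any serious obstacle; the whole argument is essentially the same as that for Proposition~\ref{preservation of independence} with the roles of left and right multiplication swapped. The only points needing verification are the absorbing properties of $\mathcal N^\infty$ on $\mathcal N^p$ and the a.e.-nonvanishing of nonzero bounded type functions, both of which are immediate from the definitions.
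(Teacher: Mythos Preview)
Your proof is correct and follows essentially the same approach as the paper: both directions exploit the identity $\sum a_i\psi_i = A\bigl(\sum a_i\varphi_i\bigr)$, with the sufficiency direction using left multiplication by $A$ and the necessity direction using left multiplication by $A_{\mathrm{adj}}$ together with $A_{\mathrm{adj}}A = I_{(\det A)}$ and the a.e.\ nonvanishing of $\det A$. The paper's write-up phrases the necessity step as $\Phi b = A_{\mathrm{adj}}a'$ with $b = I_{(\det A)}a$, but this is exactly your computation in matrix form.
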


\begin{proof}
To prove necessity($\Longrightarrow$), assume $\varphi_1,\cdots, \varphi_r $ is independent mod $\mathcal N.$
   To show that $\psi_1,\cdots, \psi_r$ are independent mod $\mathcal N$, let $a_i \in \mathcal N^2$ and $a_1\psi_1+\cdots + a_r \psi_r \in \mathcal N^1_{\mathbb C^r}$. Set $a:=(a_1,\cdots, a_r)^t\in \mathcal N^2_{\mathbb C^r}$, then $\Psi a = a^\prime$ for some $a^\prime \in \mathcal N^1_{\mathbb C^m}$. Let $b=I_{(\det A)}a$, then, since $A_{adj}A=A A_{adj}= I_{(\det A)}$,

  \begin{equation}\label{multiplication by classical adjoint}
  \Phi b = \Phi I_{(\det A)}a= I_{(\det A)} \Phi a= A_{adj}\Psi a= A_{adj} a^\prime.
  \end{equation}

  Note that $b= I_{(\det A)}a \in \mathcal N^2_{\mathbb C^m}$ and $A_{adj}a^\prime \in \mathcal N^1_{\mathbb C^m}$ since $\det A, A_{adj} \in \mathcal N^\infty$.
  Since we assumed that the columns of $\Phi$ are independent mod $\mathcal N$, equation (\ref{multiplication by classical adjoint}) gives $b=0$, which implies $a=(a_1,\cdots, a_r)^t=0$ because $\det A\ne 0$ almost everywhere on $\mathbb T$. Therefore, we conclude that $\psi_1,\cdots,\psi_r$ are independent mod $\mathcal N$.

To prove sufficiency ($\Longleftarrow$), assume $\psi_1,\cdots,\psi_r$ are independent mod $\mathcal N$. Suppose $a_1\varphi_1+\cdots + a_r \varphi_r \in \mathcal N^1_{\mathbb C^m}$, that is, $\Phi a= a^\prime$, where  $a=(a_1, \cdots, a_r)^t \in \mathcal N^2_{\mathbb C^r}$ and $a^\prime \in \mathcal N^1_{\mathbb C^m}$. Hence, we have $$ a_1\psi_1 + \cdots, + a_r\psi_r= \Psi a = A \Phi a= A a^\prime \in N^1_{\mathbb C^m}$$ because $A \in \mathcal N^\infty_{M_{m}}$ and $a^\prime \in \mathcal N^1_{\mathbb C^m}$. Since we assumed that the columns of $\Psi$ are independent mod $\mathcal N$, we have $a=(a_1,\cdots, a_r)^t=0$. Therefore, we conclude that the columns $\varphi_1,\cdots, \varphi_r$ of $\Phi$ are independent mod $\mathcal N.$
\end{proof}

\bigskip

Now we are ready for the main theorem of the paper, which states that the shape of the kernels of Hankel operators with matrix-valued functions is closely related to the maximum number of independent columns(mod $\mathcal N$) of the symbol functions.

\begin{theorem}\label{ker of block Hankel}
Let $\Phi=[\varphi_1 ,\cdots ,\varphi_m]$ be an $n\times m$ matrix-valued function ( $\varphi_j \in L^2_{\mathbb C^n}$ ) and let $r:=\hbox{ind}_\mathcal N\{\varphi_1,\cdots, \varphi_m\}$. If $r<m$, then $\ker H_{\Phi}=\Theta H^2_{\mathbb C^{m-r}}$ for some $m \times (m-r)$ matrix inner function $\Theta \in H^\infty_{M_{m\times {m-r}}}$ and if $m=r$, then we have $\ker H_\Phi=(0)$.
\end{theorem}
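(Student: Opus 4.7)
The plan is to invoke Lemma \ref{size of inner function}, which reduces the problem to computing
$$s := \max\{\hbox{Rank}[f_1,\dots,f_k] : f_i \in \ker H_\Phi,\ k\in\mathbb N\},$$
and showing $s = m - r$. The case $r = m$ is immediate from Proposition \ref{equivalent cond of ind mod N}, so I focus on $r < m$. After relabeling I may assume that $\varphi_1,\dots,\varphi_r$ are independent mod $\mathcal N$; maximality of $r$ then yields, for each $j > r$, bounded-type functions $a_1^{(j)},\dots,a_r^{(j)},b_j\in\mathcal N$ with $b_j\neq 0$ (otherwise independence of $\varphi_1,\dots,\varphi_r$ would force the $a_l^{(j)}$ to vanish too) and
$$b_j\,\varphi_j + \sum_{l=1}^r a_l^{(j)}\,\varphi_l \in \mathcal N_{\mathbb C^n}. \qquad(\star)$$

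For the upper bound $s \le m - r$, I take any $F = [f_1,\dots,f_k]$ with columns in $\ker H_\Phi$, so that $\Phi F \in H^1_{M_{n\times k}} \subset \mathcal N_{M_{n\times k}}$. Using $(\star)$ to eliminate $\varphi_{r+1},\dots,\varphi_m$ from each column relation $\sum_i F_{ij}\varphi_i \in \mathcal N_{\mathbb C^n}$ yields $\sum_{l=1}^r c_{lj}\,\varphi_l \in \mathcal N_{\mathbb C^n}$ with coefficients $c_{lj} := F_{lj} - \sum_{i>r} F_{ij}\,(a_l^{(i)}/b_i) \in \mathcal N$; the independence of $\varphi_1,\dots,\varphi_r$ then forces every $c_{lj}=0$. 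This exhibits the top $r$ rows of $F$ as $\mathcal N$-linear combinations of the bottom $m - r$ rows, so $\hbox{rank}\,F(\zeta) \le m - r$ almost everywhere and hence $\hbox{Rank}\,F \le m - r$.

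For the lower bound $s \ge m - r$, I will construct kernel elements directly from $(\star)$. Assemble the block-triangular matrix
$$A = \begin{pmatrix} I_r & C \\ 0 & D \end{pmatrix}, \qquad C_{l,j-r} := a_l^{(j)},\ \ D := \hbox{diag}(b_{r+1},\dots,b_m),$$
so that $\det A = \prod_{j>r} b_j \neq 0$ and, by $(\star)$, the last $m-r$ columns of $\Phi A$ lie in $\mathcal N_{\mathbb C^n}$. Choose a nonzero $Q \in H^\infty$ clearing the $\mathcal N$-denominators of $A$ and of those last columns simultaneously; then $f_i := QAe_{r+i} \in H^\infty_{\mathbb C^m}$ satisfies $\Phi f_i = Q(\Phi A) e_{r+i} \in H^\infty_{\mathbb C^n}\subset H^2_{\mathbb C^n}$, so $f_i\in\ker H_\Phi$, and the bottom $(m-r)\times(m-r)$ block of $F := [f_1,\dots,f_{m-r}]$ is the diagonal $QD$, invertible a.e., giving $\hbox{Rank}\,F = m - r$. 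The main obstacle is the upper bound: one must translate the Nevanlinna-class dependencies $(\star)$ among symbol columns into pointwise rank information on $F$, and the key is that the elimination coefficients $a_l^{(i)}/b_i$ still belong to $\mathcal N$, so that Proposition \ref{equivalent cond of ind mod N} applies and the resulting row relations hold pointwise almost everywhere on $\mathbb T$.
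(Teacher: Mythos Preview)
Your argument is correct. The lower bound and the case $r=m$ match the paper's proof essentially verbatim (the paper invokes Proposition~\ref{equivalent cond of ind mod N} to get the coefficients directly in $H^2$, while you take them in $\mathcal N$ and clear denominators with $Q$, but this is cosmetic).

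The upper bound, however, follows a genuinely different route. The paper never uses the dependency relations $(\star)$ for this direction; instead it takes any $l+1=m-r+1$ kernel vectors, forms the alternating sum $f=\sum_j(-1)^{j+1}(\det G_j)f_j$ where $G$ is the bottom $l\times(l+1)$ block of $F=[f_1,\dots,f_{l+1}]$, observes that the last $l$ coordinates of $f$ vanish by Laplace expansion, and then uses independence of $\varphi_1,\dots,\varphi_r$ to kill the first $r$ coordinates as well---concluding that every $(l+1)\times(l+1)$ minor of $F$ vanishes. Your approach instead substitutes $(\star)$ into each column relation $\sum_i F_{ij}\varphi_i\in\mathcal N_{\mathbb C^n}$, obtains $c_{lj}=0$ from independence, and reads off that rows $1,\dots,r$ of $F$ are pointwise (a.e.) linear combinations of rows $r+1,\dots,m$. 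This is more direct and structural: it exhibits an explicit row reduction of $F(\zeta)$ with coefficients $a_l^{(i)}(\zeta)/b_i(\zeta)$ independent of the particular kernel vectors chosen, whereas the paper's cofactor argument produces the rank bound without ever identifying such relations. The paper's method, on the other hand, stays entirely within $H^p$ spaces and avoids dividing by the $b_i$.
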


\begin{proof}
If $r=m$, then since the set of all columns of $\Phi$ is independent mod $\mathcal N$, by Proposition \ref{equivalent cond of ind mod N} we have $\ker H_\Phi=0$.
Now assume that $r<m$.
Without loss of generality, we may assume that the first $r$ columns $\varphi_1,\varphi_2, \cdots, \varphi_r$ of $\Phi$ are independent modulo Nevanlinna class. Since $\hbox{ind}_{\mathcal N}\{\varphi_1,\cdots, \varphi_m\}=r$, for any index $j>r$, the $(r+1)$ columns $\varphi_1, \varphi_2, \cdots, \varphi_r,\varphi_j$ are not independent mod $\mathcal N$.

Set $l:=m-r$ and let $f_1=(f_{11}, \cdots , f_{1m})^t, \cdots, f_{l+1}=(f_{l+1,1},\cdots , f_{l+1,n})^t$ be $(l+1)$ vectors in $\ker H_\Phi$. We will show that the vectors $f_1,\cdots,f_{l+1}$ are not independent mod $\mathcal N.$\\
Let $$F:=[f_1, f_2,\cdots, f_{l+1}]\in H^2_{M_{m\times (l+1)}}$$ and $$G:=\left[
                                                   \begin{array}{ccc}
                                                     f_{1,r+1} & \cdots & f_{l+1,r+1} \\
                                                     \vdots &  & \vdots \\
                                                     f_{1m} & \cdots & f_{l+1,m} \\
                                                   \end{array}
                                                 \right]
.$$ Obviously, $G$ is an $l\times (l+1)$ matrix function. Let $G_j$ denote the $l \times l$ matrix function obtained from $G$ by eliminating its $j$-th column. Consider a vector-valued function \begin{equation}\label{f}
  f:=\sum_{j=1}^{l+1}(-1)^{j+1}(\hbox{det}G_j) f_j \in H^2_{\mathbb C^m},
\end{equation} then the $i$-th coordinate of the column vector $f$ is \begin{equation}\label{g_i}
g_i:=\sum_{j=1}^{l+1}(-1)^{j+1}(\hbox{det}G_j) f_{ji}.
\end{equation} Define $F_i$ to be the $(l+1)\times(l+1)$ matrix function obtained by placing the $i$-th row of $F$ on the top of the $l\times(l+1)$ matrix function $G$, then the summation in (\ref{g_i}) reduces to det$F_i$, that is, $g_i= \hbox{det} F_i$. Note that if $i>r$, then $\hbox{det} F_i=0$, the zero function, because for those $i$'s, $F_i$ has two identical rows. Therefore we have $$f=(  g_1,\cdots, g_r, 0, \cdots, 0)^t
.$$ Recall that each $f_j$ is in $\ker H_\Phi$ (i.e., $\Phi f_j \in H^1_{\mathbb C^n}$) and note that $\det G_j\in H^p$ for some $p>0$. Thus, from (\ref{f}), we know that $\Phi f \in H^{p^\prime}_{\mathbb C^n}$ for some $p^\prime>0$. Hence we have $$g_1\varphi_1+\cdots + g_r \varphi_r \in H^{p^\prime}_{\mathbb C^n}$$ for some $p^\prime >0$. Since we assumed that $\varphi_1, \cdots, \varphi_r$ are independent modulo Nevanlinna class, we have $g_1=\cdots= g_r=0$ since $H^p, H^{p^\prime} \subset \mathcal N$. Hence, $$\hbox{det}F_i=0 \hbox{ for all } i= 1, \cdots, m.$$ Thus, we conclude that $\hbox{rank}F(\zeta)\le l$ for almost all $\zeta \in \mathbb T$, hence $\hbox{Rank}F \le l$. Recall that the columns $f_j$ of $F$ were arbitrarily chosen $(l+1)$ vectors from $\ker H_\Phi$. By Lemma \ref{size of inner function}, the number $s$ of the columns of the inner function $\Theta$ is less than or equal to $l(=m-r)$, that is, $s \leq l$.

To show the opposite inequality, $s\geq l$, recall that for each $i > r$, the $(r+1)$ vectors $\varphi_1, \cdots, \varphi_r, \varphi_i$ are not independent modulo Nevanlinna class while $\varphi_1, \cdots, \varphi_r$ are. By Proposition \ref{equivalent cond of ind mod N}, there exist analytic functions $h_{i1}, \cdots, h_{ir}, h_{ii}\in H^2$ such that $$h_{i1}\varphi_1 + \cdots + h_{ir} \varphi_r +h_{ii} \varphi_i \in H^1_{\mathbb C^n} \hbox{ and } h_{ii} \neq 0.$$ If we define $h_i$, for each $r<i \le m$, by $$h_i:=(h_{i1},\cdots, h_{ir},0,\cdots, 0)^t + h_{ii}e_i \in H^2_{\mathbb C^m},$$
then, $h_i \in \ker H_\Phi$, where, $e_i$ denotes the $i$-th unit vector in $\mathbb C^m$. Consider the $m\times l$ matrix-valued analytic function $F:=[h_{r+1}, h_{r+2},\cdots, h_{m}]$. Recalling that the $i$-th entry of $h_i\,(i \ge r+1)$ is a nonzero analytic function, it is clear that $\hbox{rank}F(\zeta)=l$ almost everywhere on $\mathbb T$. By Lemma \ref{size of inner function}, we have $s\geq l$ as we wanted.
The proof is complete.
\end{proof}

\bigskip

One can notice that the above proof is still valid even if the condition on the set of columns $\{\varphi_1,\cdots, \varphi_r\}$ is weakened to be a \textit{maximal} independent subset (mod $\mathcal N$) of $\{\varphi, \cdots, \varphi _m\}$. So we have the following corollary.

\begin{corollary}\label{number of maximal bddly indep col}
For a finite set of vector-valued functions $A:=\{\varphi_1,\cdots,\varphi_m \}\subset L^2_{\mathbb C^n}$, every maximal independent subset (mod $\mathcal N$) of $A$ has the same order.
\end{corollary}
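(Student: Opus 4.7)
The plan is to deduce the corollary from the strengthened version of Theorem \ref{ker of block Hankel} noted in the preceding remark, combined with the uniqueness clause of the Beurling--Lax--Halmos Theorem.

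Let $B_1$ and $B_2$ be two maximal independent (mod $\mathcal{N}$) subsets of $A$, with $|B_i|=r_i$. By re-ordering the columns of $\Phi=[\varphi_1,\ldots,\varphi_m]$ one forms two matrices $\Phi_1,\Phi_2\in L^2_{M_{n\times m}}$ whose first $r_i$ columns are exactly the elements of $B_i$. These two arrangements differ by a constant permutation matrix $\Pi\in M_{m\times m}$, namely $\Phi_2=\Phi_1\Pi$. Applying the strengthened form of Theorem \ref{ker of block Hankel} to each $\Phi_i$ produces inner functions $\Theta_i$ with
\[
\ker H_{\Phi_i}=\Theta_i H^2_{\mathbb{C}^{m-r_i}},
\]
so the Beurling--Lax--Halmos multiplicity of $\ker H_{\Phi_i}$ equals $m-r_i$.

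Next, I would observe that the two kernels are related by $\Pi$ and hence share the same multiplicity. From $\Phi_2 f=\Phi_1(\Pi f)$ one obtains $H_{\Phi_2}f=H_{\Phi_1}(\Pi f)$, so $\ker H_{\Phi_2}=\Pi^{-1}\ker H_{\Phi_1}$. Since $\Pi$ is a constant unitary matrix, the map $f\mapsto\Pi^{-1}f$ on $H^2_{\mathbb{C}^m}$ commutes with the shift operator $S$ and is an isometric isomorphism; it therefore carries a shift-invariant subspace of Beurling--Lax--Halmos multiplicity $s$ onto another such subspace of the same multiplicity $s$. Consequently $m-r_1=m-r_2$, forcing $r_1=r_2$.

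The main obstacle is really just bookkeeping---tracking how $\ker H_\Phi$ transforms under a column permutation---which is resolved by the fact that constant matrices commute with $S$. The essential content is already encoded in the strengthened Theorem \ref{ker of block Hankel}; once that is available, the uniqueness built into Beurling--Lax--Halmos delivers the equality $r_1=r_2$. Edge cases (such as $A$ itself being independent, in which $r_1=r_2=m$ trivially) are subsumed by the same argument.
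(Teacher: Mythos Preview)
Your argument is correct and rests on the same two ingredients as the paper's proof: the strengthened form of Theorem \ref{ker of block Hankel} (valid for any \emph{maximal} independent subset) together with the uniqueness of the Beurling--Lax--Halmos multiplicity. The difference is only in packaging. You reorder the columns to form two matrices $\Phi_1,\Phi_2$, obtain two kernels, and then invoke the permutation $\Pi$ to argue that these kernels have the same multiplicity. The paper avoids this detour entirely: it keeps the single fixed matrix $\Phi=[\varphi_1,\ldots,\varphi_m]$, notes that $\ker H_\Phi$ has a uniquely determined multiplicity $s$, and observes that the proof of Theorem \ref{ker of block Hankel} (in which the ``without loss of generality'' reordering is purely notational and never actually alters $\Phi$ or $\ker H_\Phi$) applies to \emph{any} maximal independent subset of its columns, forcing each such subset to have order $m-s$. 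Your permutation step is valid but unnecessary; recognizing that the WLOG in Theorem \ref{ker of block Hankel} is cosmetic lets you work with a single kernel throughout.
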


\begin{proof}
Let $\Phi:=[\varphi_1, \cdots , \varphi_m ]$ be an $n\times m$ matrix-valued function. By Beurling-Lax-Halmos theorem, $\ker H_\Phi = (0)$, or, $\ker H_\Phi= \Theta H^2_{\mathbb C^s}$ for some natural number $s$ and an inner function $\Theta \in H^\infty_{M_{n\times s}}$. Since the proof of Theorem \ref{size of inner function} is still valid even if $\{\varphi_1,\cdots, \varphi_r \}$ is a maximal set of independent columns of $\Phi$ mod $\mathcal N$, we can conclude that every maximal independent set of columns of $\Phi$ has order $r=m-s$. The proof is complete.
\end{proof}

\bigskip

According to the above corollary, for a finte set $A$ of vector-valued functions in $L^\infty_{\mathbb C^n}$, the \textit{independency} of $A$ modulo Nevanlinna class can be understood as the order of an arbitrary maximal independent subset of $A$ (mod $\mathcal N$).
Here's another analogue of independence modulo Nevanlinna class to `linear independence'.

\begin{corollary}\label{fixedness of order of max indep set}
Let $A= \{\varphi_1, \varphi_2,\cdots, \varphi_r\}$ and $B=\{\psi_1, \psi_2, \cdots, \psi_s\}$ be two sets of independent vectors mod $\mathcal N$ in $L^2_{\mathbb C^n}$ with $r<s$. Then there exists a subset $B_1=\{\psi_{i_1}, \cdots, \psi_{i_{s-r}}\}$ of $B$ such that $A \cup B_1= \{\varphi_1,\cdots, \varphi_r, \psi_{i_{1}}, \cdots, \psi_{i_{s-r}}\}$ is independent modulo Nevanlinna class.
\end{corollary}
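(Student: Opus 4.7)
The plan is to imitate the Steinitz exchange lemma from linear algebra, with Corollary \ref{number of maximal bddly indep col} playing the role of ``all bases have the same cardinality.'' Set $C := A \cup B$ and $t := \mathrm{ind}_{\mathcal N} C$. Since $B$ is an independent subset of $C$ of size $s$, we have $t \geq s$ at the outset.

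I then construct $B_1$ by greedily extending $A$ inside $C$. Put $T_0 := A$, and for $j = 1, \ldots, s$ let $T_j := T_{j-1} \cup \{\psi_j\}$ whenever this enlargement is still independent modulo $\mathcal N$, and $T_j := T_{j-1}$ otherwise. Set $T := T_s$ and $B' := T \setminus A \subseteq B$, so that $T = A \cup B'$ is independent mod $\mathcal N$ by construction. For every $\psi_k \in B \setminus B'$ the candidate set $T_{k-1} \cup \{\psi_k\}$ was rejected, so it carries a nontrivial dependence relation; adjoining zero coefficients for the elements of $T \setminus T_{k-1}$ lifts this to a nontrivial dependence relation on $T \cup \{\psi_k\}$. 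Hence $T$ cannot be enlarged within $C$, i.e., $T$ is a maximal independent subset of $C$ modulo $\mathcal N$.

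Applying Corollary \ref{number of maximal bddly indep col} to the finite set $C$, every maximal independent subset of $C$ has the common order $t$, so $|T| = t \geq s$ and therefore $|B'| \geq s - r$. Pick any $B_1 \subseteq B'$ with $|B_1| = s - r$. Since subsets of sets that are independent mod $\mathcal N$ are themselves independent (any dependence relation on the subset extends by zero coefficients to one on the full set), $A \cup B_1$ is independent modulo Nevanlinna class and has cardinality $s$, as required. The delicate step I expect to be the main obstacle is precisely the verification that the greedy construction produces a set $T$ which is \emph{maximal} in the non-extendability sense required by Corollary \ref{number of maximal bddly indep col}; once the dependence-lifting argument is in place, the Corollary equates $|T|$ with the top independency $t \geq s$ and the extraction of $B_1$ of the exact cardinality $s - r$ is a matter of counting.
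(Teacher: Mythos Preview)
Your argument is correct and follows essentially the same route as the paper: form $C = A \cup B$, note $\mathrm{ind}_{\mathcal N} C \geq s$, extend $A$ to a maximal independent subset of $C$, invoke Corollary~\ref{number of maximal bddly indep col} to pin down its size, and then discard surplus elements of $B$ to reach cardinality exactly $s-r$. The only difference is cosmetic: the paper simply asserts that a maximal independent subset of $C$ containing $A$ exists, while you build it explicitly via the greedy procedure; your version is therefore a more detailed rendering of the same proof rather than a different approach.
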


\begin{proof}
Consider the set
$$
C:=A\cup B =\{\varphi_1,\cdots, \varphi_r,\psi_1,\cdots, \psi_s\}
$$ with $r+s$ elements and let $\alpha:= \hbox{ind}_{\mathcal N} C $.
Since $B$ is an independent subset(mod $\mathcal N$) of $C$, of order $s$, it is clear that $\alpha \ge s > r$.
Note that $A$ is an independent subset of $C$. Since Corollary \ref{number of maximal bddly indep col} says that any maximal independent subset of $C$ has order $\alpha$,
there should be a maximal independent subset of $C$ of order $\alpha$, containing $A$. Hence, the conclusion follows.
\end{proof}

\bigskip

\section{Preservation of independency of matrix columns under multiplication by matrix functions}

In this section, it will be shown that the independency of columns of a matrix function is preserved under multiplication by matrix functions with certain properties.
The next result, which is a generalization of Proposition \ref{preservation of independence}, shows that the independency(mod $\mathcal N$) of columns of a matrix function is preserved under right multiplication by matrix functions with bounded type entries and nonzero determinants. Preservation of independency(mod $\mathcal N$) under left multiplication will be discussed later in this section.

\begin{theorem}\label{independency multiplied by matrix_0}
Let $\Phi=[\varphi_1,\cdots , \varphi_s]\in L^2_{M_{n\times s}}$, where $\varphi_i \in L^2_{\mathbb C^n}$ and let $A=[a_{ij}]_{i,j=1}^s$ be an $s \times s$ matrix function whose entries are in $\mathcal N^\infty$ and $\det A \ne 0$. Set $\Psi=[\psi_1,\cdots ,\psi_s]:=\Phi A$, where, $\psi_i \in L^2_{\mathbb C^n}$, then
$$\hbox{ind}_\mathcal{N}\{\varphi_1,\cdots, \varphi_s\}=\hbox{ind}_{\mathcal N}\{\psi_1, \cdots, \psi_s\}.$$
\end{theorem}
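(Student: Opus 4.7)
I aim to extract a common linear-algebra-modulo-$\mathcal{N}$ mechanism that yields both inequalities $\hbox{ind}_\mathcal{N}\{\psi_j\}\ge \hbox{ind}_\mathcal{N}\{\varphi_j\}$ and the reverse by essentially the same argument; Proposition \ref{preservation of independence} alone is insufficient, since it only controls the case ``all $s$ columns independent.'' Set $r:=\hbox{ind}_\mathcal{N}\{\varphi_1,\dots,\varphi_s\}$ and, after relabeling, assume $\{\varphi_1,\dots,\varphi_r\}$ is a maximal independent subset. By maximality, for each $i>r$ there is a relation $c_0\varphi_i+\sum_{k\le r}c_k\varphi_k\in\mathcal{N}_{\mathbb C^n}$ with $c_0\ne 0$ in $\mathcal{N}$ (otherwise independence of $\varphi_1,\dots,\varphi_r$ would be contradicted), and dividing by $c_0$---legitimate since $\mathcal{N}$ is closed under division by nonzero elements---produces
\[
 \varphi_i=\sum_{k=1}^{r}d_{i,k}\varphi_k+\eta_i,\qquad d_{i,k}\in\mathcal{N},\ \eta_i\in\mathcal{N}_{\mathbb C^n}.
\]

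Next I expand $\psi_j=\sum_i a_{ij}\varphi_i$ and substitute the expressions above for the indices $i>r$ to obtain the key congruence
\[
 \psi_j\equiv \sum_{k=1}^{r}b_{k,j}\varphi_k \pmod{\mathcal{N}_{\mathbb C^n}},\qquad b_{k,j}=a_{k,j}+\sum_{i>r}a_{i,j}\,d_{i,k},
\]
with $b_{k,j}\in\mathcal{N}$. Splitting $A=\left(\begin{smallmatrix}A_1\\ A_2\end{smallmatrix}\right)$ into top-$r$ and bottom-$(s-r)$ row blocks and setting $D:=(d_{i,k})$, the $r\times s$ matrix $B:=(b_{k,j})$ satisfies $B=A_1+D^tA_2$, and therefore
\[
 \begin{pmatrix}I_r & D^t\\ 0 & I_{s-r}\end{pmatrix}A=\begin{pmatrix}B\\ A_2\end{pmatrix}.
\]
Since the left factor has determinant $1$ and $\det A\ne 0$, the stacked matrix is invertible almost everywhere, forcing $\hbox{rank}\,B=r$; hence some $r\times r$ minor $\det B^{(j_1,\dots,j_r)}$ is a nonzero element of $\mathcal{N}$.

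The independence of $\psi_{j_1},\dots,\psi_{j_r}$ modulo $\mathcal{N}$ then follows quickly: a relation $\sum_l c_l \psi_{j_l}\in\mathcal{N}_{\mathbb C^n}$ with $c_l\in\mathcal{N}$ becomes, via the congruence, $\sum_k\left(\sum_l c_lb_{k,j_l}\right)\varphi_k\in\mathcal{N}_{\mathbb C^n}$; independence of $\varphi_1,\dots,\varphi_r$ forces $B^{(j_1,\dots,j_r)}c=0$ almost everywhere, and nonvanishing of the chosen minor yields $c=0$. Thus $\hbox{ind}_\mathcal{N}\{\psi_j\}\ge r$.

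For the reverse inequality I plan to exploit the symmetry $\Phi=\Psi A^{-1}$, where $A^{-1}=A_{adj}/\det A$ has entries in $\mathcal{N}$ and determinant $1/\det A\ne 0$. Crucially, the argument above used only that the entries of $A$ lie in $\mathcal{N}$ (not in $\mathcal{N}^\infty$) together with $\det A\ne 0$, so it applies verbatim with the roles of $\Phi$ and $\Psi$ swapped and $A$ replaced by $A^{-1}$, giving $\hbox{ind}_\mathcal{N}\{\varphi_j\}\ge \hbox{ind}_\mathcal{N}\{\psi_j\}$. The main subtlety---and the reason Proposition \ref{preservation of independence} cannot be cited for this direction---is precisely that $A^{-1}$ need not have $\mathcal{N}^\infty$ entries; however, because ``independence modulo $\mathcal{N}$'' is defined with $\mathcal{N}$-coefficients, the purely algebraic manipulation in mod-$\mathcal{N}$ arithmetic is indifferent to this loss of boundedness.
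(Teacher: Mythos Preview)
Your argument is correct, and it takes a genuinely different route from the paper's. The paper proves the theorem by invoking the Hankel-operator machinery already developed: it writes $\ker H_\Phi=\Theta H^2_{\mathbb C^{s-r}}$ and $\ker H_\Psi=\Theta' H^2_{\mathbb C^{s-r'}}$ via Theorem~\ref{ker of block Hankel}, clears denominators so that $AI_{\theta_1}$ and $A_{adj}I_{\theta_2}$ lie in $H^\infty_{M_s}$, and then shows $A_{adj}I_\theta\Theta H^2_{\mathbb C^{s-r}}\subset\ker H_\Psi$ (and symmetrically), using Lemma~\ref{size of inner function} on the rank of the resulting matrix to compare $s-r$ with $s-r'$. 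Your proof, by contrast, never touches Hankel operators or Beurling--Lax--Halmos: it is a direct linear-algebra argument over the ``field'' $\mathcal N$, reducing $\psi_j$ modulo $\mathcal N_{\mathbb C^n}$ to combinations of a maximal independent set $\varphi_1,\dots,\varphi_r$, and then using a Laplace-type observation on the block factorization $\bigl(\begin{smallmatrix}I&D^t\\0&I\end{smallmatrix}\bigr)A=\bigl(\begin{smallmatrix}B\\A_2\end{smallmatrix}\bigr)$ to locate a nonvanishing $r\times r$ minor of $B$. Your approach is more elementary and self-contained (and, as you note, insensitive to whether the entries of $A$ lie in $\mathcal N^\infty$ or merely in $\mathcal N$, which is exactly what makes the symmetry with $A^{-1}$ work); the paper's approach has the advantage of reinforcing the central correspondence between $\hbox{ind}_{\mathcal N}$ and the column-count of the inner function in $\ker H_\Phi$, which is the paper's main theme.
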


\begin{proof}
Let $r:=\hbox{ind}_\mathcal{N}\{\varphi_1,\cdots, \varphi_s\}$ and $r^\prime :=\hbox{ind}_{\mathcal N}\{\psi_1, \cdots, \psi_s\}$.
Referring to Theorem \ref{ker of block Hankel}, we may let $\ker H_{\Phi}= \Theta H^2_{\mathbb C^{s-r}}$ and $\ker H_{\Psi}= \Theta^\prime H^2_{\mathbb C^{s-r^\prime}}$, where, $\Theta \in H^\infty_{M_{s\times(s-r)}}$ and $\Theta^\prime \in H^\infty_{M_{s\times(s-r^\prime)}}$ are matrix inner functions. Since each entry of $A_{adj}$ belongs to $\mathcal N^\infty$ by elementary matrix theory, there exist inner functions $\theta_1, \theta_2\in H^\infty$ such that both $A I_{\theta_1}$ and $A_{adj} I_{\theta_2} $ belong to $ H^\infty_{M_{s\times s}}$.
Define $\theta=\theta_1 \theta_2$, then
$$
I_{(\det A)\theta}=A A_{adj}I_\theta= (A I_{\theta_1})(A_{adj}I_{\theta_2})\in H^\infty_{M_{s\times s}}.
$$
Observe, for $f \in H^2_{\mathbb C^{s-r}} $,
\begin{equation}\label{right multiplication 1}
\Phi A A_{adj} I_{\theta} \Theta f = \Phi I_{(\det A)\cdot\theta}\Theta f =\Phi \Theta  I_{(\det A)\theta} f \in H^1_{\mathbb C^{n}}
\end{equation} because $\ker H_\Phi =\Theta H^2_{\mathbb C^{s-r}}$ and $I_{(\det A)\theta} f \in H^2_{\mathbb C^{s-r}}$.
  Since $A_{adj} I_\theta \Theta f \in H^2_{\mathbb C^{s}}$, (\ref{right multiplication 1}) implies
   \begin{equation}\label{preservation1}
      A_{adj} I_\theta \Theta H^2_{\mathbb C^{s-r}} \subset \ker H_{\Phi A}= \ker H_\Psi.
   \end{equation}

In particular, $A_{adj} I_\theta \Theta {e}_i \in \ker H_{\Psi}$, where, $e_i$ is the $i$-th unit vector in $\mathbb C^{s-r}$. Observe that
$$
(\det A_{adj}(\zeta))= (\det A(\zeta))^{s-1}\ne 0
$$
for almost all $\zeta \in \mathbb T$ since $\det A$ is a nonzero function of bounded type. Consider a matrix function
   $$
   F:=A_{adj} I_\theta \Theta [{e}_1 \, \cdots \, {e}_{s-r}],
   $$ all of whose columns are in $\ker H_{\Psi}$, then we have $\hbox{rank} F(\zeta)= s-r$ almost everywhere on $\mathbb T$. Hence $\hbox{Rank}F= s-r$. By Lemma \ref{size of inner function}, we conclude that
   \begin{equation}\label{right multiplication 2}
   s - r^\prime\ge s-r, \hbox{ or, equivalently, } r^\prime \le r.
   \end{equation}

For the opposite inequality, $r^\prime \ge r$, recall that $\ker H_\Psi=\ker H_{\Phi A}= \Theta^\prime H^2_{\mathbb C^{s-r^\prime}}$ and observe
\begin{equation}\label{right multiplicatio 2-1}
\Phi I_{\theta_1} A \Theta^\prime g=\Phi A \Theta^\prime I_{\theta_1}g \in H^1_{\mathbb C^n}.
\end{equation}
Since $(I_{\theta_1}A) \Theta^\prime g \in H^2_{\mathbb C^s}$, (\ref{right multiplicatio 2-1}) implies $I_{\theta_1} A \Theta^\prime g \in \ker H_\Phi$ for each $g\in H^2_{\mathbb C^{s-r^\prime}}$. Consider a matrix function
$$
F^\prime:=I_{\theta_1} A \Theta^\prime [e_1,\cdots, e_{s-r^\prime}],$$
 where $e_i$ is the $i$-th unit vector in $\mathbb C^{s-r^\prime}$. By the same argument as the case of $\hbox{Rank}F$, we have $\hbox{Rank} F^\prime =s-r^\prime.$ Since the columns of $F^\prime$ are all in $\ker H_{\Phi}$, using Lemma \ref{size of inner function} again, we conclude
\begin{equation}\label{right multiplication 3}
 s-r \ge s-(s-r^\prime), \hbox{ or, equivalently, } r^\prime \ge r.
\end{equation}
Inequalities (\ref{right multiplication 2}) and (\ref{right multiplication 3}) gives $r=r^\prime$, as desired.
\end{proof}
\medskip

\begin{corollary}\label{independency multiplied by matrix_1}
Let $\Phi=[\varphi_1,\cdots , \varphi_s]\in L^2_{M_{n\times s}}$, where $\varphi_i \in L^2_{\mathbb C^n}$ and let $A=[a_{ij}] \in L^\infty_{M_{s\times l}}$, where, $a_{ij}\in \mathcal N^\infty$, $s \le l$ and $\hbox{Rank} A =s$. Set $\Psi=[\psi_1,\cdots,\psi_l]:=\Phi A$, then the columns of $\Phi$ and $\Psi$ have the same independency, that is,
$$\hbox{ind}_\mathcal{N}\{\varphi_1,\cdots, \varphi_s\}=\hbox{ind}_{\mathcal N}\{\psi_1, \cdots, \psi_l\}.$$
\end{corollary}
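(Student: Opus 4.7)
The plan is to reduce the non-square case of this corollary to the square case handled by Theorem \ref{independency multiplied by matrix_0}. I will enlarge $A$ to an $l\times l$ matrix with entries in $\mathcal N^\infty$ and nonzero determinant, while correspondingly padding $\Phi$ with zero columns so that the product is unchanged.

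First I would locate an $s\times s$ column submatrix of $A$ whose determinant is not the zero function. Each $s\times s$ minor of $A$ is a polynomial expression in entries from $\mathcal N^\infty$ and hence lies in $\mathcal N^\infty$. The hypothesis $\hbox{Rank}\,A=s$ says $\hbox{rank}\,A(\zeta)=s$ on a set of positive measure, so on that set at least one $s\times s$ minor is nonzero; since there are only $\binom{l}{s}$ minors, by a pigeonhole argument at least one fixed minor $\det A_0$ is nonzero on a set of positive measure, and being bounded type it is then nonzero almost everywhere on $\mathbb T$. After permuting the columns of $A$ (and correspondingly the $\psi_i$, which preserves their independency mod $\mathcal N$), I may assume $A=[A_0\;\;A_1]$ with $A_1\in\mathcal N^\infty_{M_{s\times(l-s)}}$. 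This first step is where I expect the main obstacle: the rank condition is pointwise and a priori no single submatrix need realize the rank everywhere, but finiteness of the number of minors combined with the fact that a nonzero bounded type function vanishes only on a Lebesgue null set resolves it uniformly.

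Next I would form the auxiliary matrices
\[
A':=\begin{pmatrix}A_0 & A_1 \\ 0 & I_{l-s}\end{pmatrix}\in\mathcal N^\infty_{M_{l\times l}},\qquad \Phi':=[\Phi,\,0]\in L^2_{M_{n\times l}}.
\]
Then $\det A'=\det A_0\ne 0$ almost everywhere, and a direct block multiplication gives $\Phi' A'=[\Phi A_0,\,\Phi A_1]=\Phi A=\Psi$. Applying Theorem \ref{independency multiplied by matrix_0} to $\Phi'$ and $A'$ yields $\hbox{ind}_{\mathcal N}\{\hbox{columns of }\Phi'\}=\hbox{ind}_{\mathcal N}\{\psi_1,\ldots,\psi_l\}$. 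Finally, the columns of $\Phi'$ are $\varphi_1,\ldots,\varphi_s$ together with $l-s$ zero vectors, and since any set containing the zero vector fails to be independent mod $\mathcal N$, the extra zero columns contribute nothing: the left hand side equals $\hbox{ind}_{\mathcal N}\{\varphi_1,\ldots,\varphi_s\}$, completing the identification.
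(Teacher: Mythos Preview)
Your proof is correct, but it takes a genuinely different route from the paper's. The paper selects an $s\times s$ column submatrix $A'$ of $A$ with $\det A'\ne 0$, applies Theorem~\ref{independency multiplied by matrix_0} to the pair $(\Phi,A')$ to get $\hbox{ind}_{\mathcal N}\{\psi_1,\ldots,\psi_s\}=\hbox{ind}_{\mathcal N}\{\varphi_1,\ldots,\varphi_s\}$, and then argues directly, using the classical adjoint of $A'$, that every extra column $\psi_p$ (for $p>s$) is a $\mathcal N$-combination of $\psi_1,\ldots,\psi_s$, so a maximal independent subset of $\{\psi_1,\ldots,\psi_s\}$ is already maximal in $\{\psi_1,\ldots,\psi_l\}$. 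You instead enlarge $A$ to an $l\times l$ block $\begin{pmatrix}A_0 & A_1\\ 0 & I\end{pmatrix}$ and pad $\Phi$ with zero columns so that the product is unchanged, then invoke Theorem~\ref{independency multiplied by matrix_0} once on the full $l\times l$ system. Your padding trick is exactly the device the paper uses in the dual case (Corollary~\ref{independency multiplied by matrix_2}, where $s\ge l$), so in effect you have unified the two corollaries under a single reduction; the paper's argument for the present case is more hands-on but exposes explicitly why the extra columns $\psi_{s+1},\ldots,\psi_l$ carry no new independence. The final observation that zero columns contribute nothing to $\hbox{ind}_{\mathcal N}$ is straightforward from the definition, since no independent set can contain the zero vector.
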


\begin{proof}
Let $A =[a_1,\cdots, a_l]$, where $a_i$ is the $i$-th column of $A$.
Since $\hbox{Rank} A =s$, there are $s$ columns $a_{i_1}, \cdots, a_{i_s}$ of $A$ such that $\det[a_{i_1}\, \cdots \, a_{i_s}](\zeta)\ne 0$ on a subset of $\mathbb T$ of positive measure. Since $\det[a_{i_1}, \cdots , a_{i_s}]$ is of bounded type, we find $\det[a_{i_1}, \cdots , a_{i_s}](\zeta)\ne 0$ almost everywhere on $\mathbb T$. Without loss of generality, we may assume that $ \det [a_1 ,\cdots, a_s]\ne 0$ a.e. on $\mathbb T$. Let $A^\prime:=[a_1 ,\cdots, a_s]$, $\Psi^\prime:=\Phi A^\prime$ and $b_p=\left(
                     \begin{array}{c}
                       b_{p1} \\
                       \vdots \\
                       b_{ps} \\
                     \end{array}
                   \right)
:= A^\prime_{adj}a_p \in \mathcal N^\infty_{\mathbb C^s}$ for each $1 \le p \le l$.  Observe the following :
\begin{equation}\label{independency cor 1}
\sum_{i=1}^{s}b_{pi}\psi_i= \Psi^\prime b=\Phi A^\prime A^\prime_{adj} a_p=\Phi I_{(\det A^\prime)}a_p=(\det A^\prime)\Phi a_p=(\det A^\prime) \psi_p.
\end{equation}

Now let $B=\{\psi_{j_1}, \cdots, \psi_{j_r}\}$ be a maximal independent subset(mod $\mathcal N$) of $\{\psi_1,\cdots, \psi_s\}$, then by the definition of independence modulo Nevanlinna class, it is clear that for each $1\le k \le s$
\begin{equation}\label{independency cor 2}
  \psi_k=\sum_{\psi_i\in B}h_i \psi_i,\hbox{ where, $h_i\in \mathcal N$}.
\end{equation}
Note that $b_{pi}\in \mathcal N^\infty$ and $\det A^\prime(\ne 0)\in \mathcal N^\infty$ in (\ref{independency cor 1}), hence, by (\ref{independency cor 1}) and (\ref{independency cor 2}), for each $1 \le k \le l$,
$$\psi_k=\sum_{\psi_i\in B}h_i^\prime \psi_i, \hbox{ where, $h_i^\prime \in \mathcal N$.} $$

Therefore, we conclude that $B$ is a maximal independent subset (mod $\mathcal N$) also of $\{\psi_1,\cdots,\psi_l\}$. Hence,
\begin{equation}\label{preservation of independence 2}
 \hbox{ind}_{\mathcal N}\{\psi_1, \cdots, \psi_l\}
 =|B|
 =\hbox{ind}_{\mathcal N}\{\psi_1, \cdots, \psi_s\},
\end{equation}
where, $|B|$ denotes the order of the set $B$.
 But, since $[\psi_1,\cdots ,\psi_s]=[\varphi_1,\cdots,\varphi_s] A^\prime$ and $\det A^\prime \ne 0$,
 \begin{equation}\label{preservation of independence 3}
   \hbox{ind}_{\mathcal N}\{\psi_1, \cdots, \psi_s\}= \hbox{ind}_{\mathcal N}\{\varphi_1, \cdots, \varphi_s\}
 \end{equation}
 by Theorem \ref{independency multiplied by matrix_0}.
From (\ref{preservation of independence 2}) and (\ref{preservation of independence 3}) we get the desired conclusion.
\end{proof}

\bigskip

If $s \ge l$ in the above corollary, then we cannot derive the same conclusion.

\begin{corollary}\label{independency multiplied by matrix_2}
If the conditions $s\le l$ and $\hbox{Rank}A=s$ are replaced by $s\ge l$ and $\hbox{Rank}A= l$ in the setting of Corollary \ref{independency multiplied by matrix_1}, then we have $$ \hbox{ind}_{\mathcal N}\{\varphi_1,\cdots, \varphi_s\} - (s-l)\le \hbox{ind}_{\mathcal N}\{\psi_1, \cdots, \psi_l\}\le \hbox{ind}_{\mathcal N}\{\varphi_1,\cdots, \varphi_s\}.$$
\end{corollary}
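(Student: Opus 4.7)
The strategy is to enlarge the $s\times l$ matrix $A$ to a square $s\times s$ matrix $\tilde A$ with entries in $\mathcal N^\infty$ and nonzero determinant, so that Theorem \ref{independency multiplied by matrix_0} applies directly to $\Phi\tilde A$. Once that is done, both the upper and the lower bound will fall out of a finite counting argument comparing the combined column set of $\Phi\tilde A$ with $\{\psi_1,\ldots,\psi_l\}$ and with the $\varphi$-columns used in the extension.

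For the construction, $\hbox{Rank}\,A=l$ implies that some $l\times l$ minor of $A$ is nonzero on a subset of $\mathbb T$ of positive measure; since that minor lies in $\mathcal N^\infty$, the same rigidity argument used in the proof of Corollary \ref{independency multiplied by matrix_1} upgrades this to nonvanishing almost everywhere. Reordering the columns $\varphi_1,\ldots,\varphi_s$ of $\Phi$ (which preserves $\hbox{ind}_{\mathcal N}\{\varphi_1,\ldots,\varphi_s\}$) and applying the corresponding row permutation to $A$, we may assume the top $l\times l$ block $A_0$ of $A$ satisfies $\det A_0\ne 0$ a.e.\ on $\mathbb T$. Set
$$
\tilde A:=\bigl[\,A\;\big|\;e_{l+1}\;\cdots\;e_s\,\bigr]\in\mathcal N^\infty_{M_{s\times s}},
$$
where $e_{l+1},\ldots,e_s$ are the last $s-l$ standard basis vectors of $\mathbb C^s$. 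A block expansion gives $\det\tilde A=\det A_0\ne 0$, and the columns of $\Phi\tilde A$ are exactly $\psi_1,\ldots,\psi_l,\varphi_{j_1},\ldots,\varphi_{j_{s-l}}$, where $\{j_1,\ldots,j_{s-l}\}$ indexes the $\varphi$-columns not brought to the top. Theorem \ref{independency multiplied by matrix_0} then delivers
$$
\hbox{ind}_{\mathcal N}\{\psi_1,\ldots,\psi_l,\varphi_{j_1},\ldots,\varphi_{j_{s-l}}\}=\hbox{ind}_{\mathcal N}\{\varphi_1,\ldots,\varphi_s\}=r.
$$

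The upper bound is now immediate, since $\{\psi_1,\ldots,\psi_l\}$ is a subset of the combined set, so its independency is at most $r$. For the lower bound, use Corollary \ref{number of maximal bddly indep col} to pick a maximal independent subset $M$ of the combined set, necessarily of size $r$, and split $M=M_\psi\cup M_\varphi$ according to which part each element belongs to. Then $M_\psi\subset\{\psi_1,\ldots,\psi_l\}$ is independent, so $|M_\psi|\le\hbox{ind}_{\mathcal N}\{\psi_1,\ldots,\psi_l\}$, while $|M_\varphi|\le s-l$ trivially, which yields
$$
r=|M_\psi|+|M_\varphi|\le\hbox{ind}_{\mathcal N}\{\psi_1,\ldots,\psi_l\}+(s-l),
$$
exactly the lower bound.

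The only step that demands real effort is the production of an $l\times l$ submatrix of $A$ with nonvanishing determinant, which uses the rank hypothesis together with the rigidity of bounded type functions. Everything else reduces to one invocation of Theorem \ref{independency multiplied by matrix_0} followed by the elementary counting comparison sketched above, so I do not anticipate any other obstacle.
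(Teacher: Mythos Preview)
Your approach is correct and essentially identical to the paper's. Both arguments extend $A$ to a square $s\times s$ matrix $\tilde A=\begin{bmatrix}A_0 & 0\\ A_2 & I\end{bmatrix}$ with $\det\tilde A=\det A_0\ne 0$, invoke Theorem \ref{independency multiplied by matrix_0} to get $\hbox{ind}_{\mathcal N}$ of the combined column family $\{\psi_1,\ldots,\psi_l,\varphi_{j_1},\ldots,\varphi_{j_{s-l}}\}$ equal to $r$, and then read off the two inequalities by subset and counting considerations; your treatment of the row-permutation hidden in the paper's ``without loss of generality'' and your explicit splitting $M=M_\psi\cup M_\varphi$ for the lower bound are just more careful renderings of steps the paper leaves implicit.
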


\begin{proof}
Since $\hbox{Rank} A =l$ and the entries of $A$ are in $\mathcal N^\infty$, the argument in the proof of Corollary \ref{independency multiplied by matrix_1} shows that there are $l$ rows $a_{i_1}=(a_{i_1 1},\cdots, a_{i_1 l}), \cdots, a_{i_l}=(a_{i_l 1},\cdots, a_{i_l l})$ of $A$ such that $\det \left[
                                                                      \begin{array}{c}
                                                                        a_{i_1} \\
                                                                        \vdots \\
                                                                        a_{i_l} \\
                                                                      \end{array}
                                                                    \right] \ne 0
$ almost everywhere on $\mathbb T$. Without loss of generality, assume that the $\det A_1 \ne 0$, where $A_1$ is the $l\times l$ submatrix function formed with the first $l$ rows of $A$. Write $A= \left[
                             \begin{array}{c}
                               A_1 \\
                               A_2 \\
                             \end{array}
                           \right]
$
and extend $A$ to the square matrix $A^\prime :=\left[
                                \begin{array}{cc}
                                  A_1 & 0 \\
                                  A_2 & I \\
                                \end{array}
                              \right]
$, where, $I$ is the $(s-l)\times (s-l)$ identity matrix, then $\det A^\prime =(\det A_1) \cdot (\det I)= \det A_1 \ne 0$.
Let $\Psi^\prime =\Phi A^\prime=[\psi_1,\cdots,\psi_l\,\cdots ,\psi_s]$, then Theorem \ref{independency multiplied by matrix_0} guarantees $$\hbox{ind}_{\mathcal N}\{\psi_1, \cdots, \psi_s\}=\hbox{ind}_{\mathcal N}\{\varphi_1, \cdots, \varphi_s\},$$ from which, we have $$\hbox{ind}_{\mathcal N}\{\psi_1, \cdots, \psi_l\} \le \hbox{ind}_{\mathcal N}\{\varphi_1, \cdots, \varphi_s\}.$$ Since $\Psi^\prime$ is an extension of $\Phi A= \Psi=[\psi_1,\cdots, \psi_l]$, it is also easy to derive $$\hbox{ind}_{\mathcal N}\{\psi_1, \cdots, \psi_l\}\ge\hbox{ind}_{\mathcal N}\{\psi_1, \cdots, \psi_s\}-(s-l)=\hbox{ind}_{\mathcal N}\{\varphi_1, \cdots, \varphi_s\}-(s-l).$$
\end{proof}

Note in the above corollary that if $s=l$, then the inequalities become equalities, hence, we have Theorem \ref{independency multiplied by matrix_0} in this case.
If the multiplication by $A$ is to the left side of $\Phi$, we have similar results.

\begin{theorem}\label{preservation of independence under left multiplication 2}
Let $A$ be an $m\times m$ matrix-valued function with entries in $\mathcal N^\infty$ such that $\det A \ne 0$ and $\Phi= [\varphi_1, \cdots, \varphi_r] \in L^2_{M_{m \times r}}$. Let $\psi_i$ denote the $i$-th column of $\Psi:= A \Phi$, then $$\hbox{ind}_{\mathcal N}\{\varphi_1, \cdots, \varphi_r\}=\hbox{ind}_{\mathcal N}\{\psi_1, \cdots, \psi_r\}.$$
\end{theorem}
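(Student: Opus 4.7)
The plan is to deduce this quantitative equality from Proposition~\ref{preservation of independence under left multiplication}, which already established the qualitative analogue: all $r$ columns of $\Phi$ are independent mod $\mathcal N$ if and only if all $r$ columns of $\Psi=A\Phi$ are.

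The key observation is that Proposition~\ref{preservation of independence under left multiplication} remains applicable when $\Phi$ is replaced by any column sub-tuple, since its hypothesis constrains only $A$. Explicitly, for any index set $I=\{i_1,\ldots,i_s\}\subset\{1,\ldots,r\}$, left multiplication commutes with column selection:
\[
A\,[\varphi_{i_1},\ldots,\varphi_{i_s}]=[\psi_{i_1},\ldots,\psi_{i_s}],
\]
so applying Proposition~\ref{preservation of independence under left multiplication} to the $m\times s$ submatrix $[\varphi_{i_1},\ldots,\varphi_{i_s}]$ (the same $A$ still satisfying the hypotheses) yields that $\{\varphi_{i_1},\ldots,\varphi_{i_s}\}$ is independent mod $\mathcal N$ if and only if $\{\psi_{i_1},\ldots,\psi_{i_s}\}$ is.

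With this sub-tuple equivalence in hand, the map $I\mapsto I$ is a cardinality-preserving bijection between the independent subsets of $\{\varphi_1,\ldots,\varphi_r\}$ and those of $\{\psi_1,\ldots,\psi_r\}$. Taking the supremum of the sizes on each side gives the claimed equality
\[
\hbox{ind}_{\mathcal N}\{\varphi_1,\ldots,\varphi_r\}=\hbox{ind}_{\mathcal N}\{\psi_1,\ldots,\psi_r\}.
\]
(Alternatively, Corollary~\ref{number of maximal bddly indep col} lets one phrase this via maximal independent subsets, but the supremum definition makes the conclusion immediate.)

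There is essentially no obstacle; the argument is a bookkeeping extension of Proposition~\ref{preservation of independence under left multiplication} over all index subsets, and the only substantive check is that left multiplication commutes with column selection. An alternative route mirroring Theorem~\ref{independency multiplied by matrix_0} would realize both independencies through $\ker H_\Phi$ and $\ker H_{A\Phi}$ and then compare the associated inner functions via Lemma~\ref{size of inner function}; but unlike the right-multiplication case, left multiplication by a matrix with merely $\mathcal N^\infty$ entries does not map $\ker H_\Phi$ into $\ker H_{A\Phi}$ in a transparent way, so that detour would be considerably more delicate and would ultimately re-prove Proposition~\ref{preservation of independence under left multiplication} in disguise.
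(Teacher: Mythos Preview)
Your proof is correct and follows essentially the same approach as the paper: both reduce the equality to Proposition~\ref{preservation of independence under left multiplication} applied to column sub-tuples, using that $A[\varphi_{i_1},\ldots,\varphi_{i_s}]=[\psi_{i_1},\ldots,\psi_{i_s}]$. The only cosmetic difference is that the paper fixes one maximal independent subset and checks maximality on the $\psi$-side via Corollary~\ref{number of maximal bddly indep col}, whereas you range over all index subsets and take the supremum directly.
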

\begin{proof}
  Let $\alpha:= \hbox{ind}_{\mathcal N}\{\varphi_1,\cdots, \varphi_r\}$ and $\beta:=\hbox{ind}_{\mathcal N}\{\psi_1,\cdots,\psi_r\}$.
    If $\alpha=r$, then the statement was proved in Proposition \ref{preservation of independence under left multiplication}. Now suppose $\alpha < r$. Without loss of generality, we may assume that the first $\alpha$ columns $\varphi_1,\cdots,\varphi_\alpha$ are independent mod $\mathcal N$. Note that $$\psi_i= A \varphi_i\hbox{ for each } i=1,\cdots, r
      $$
      and
      $$ [\psi_1,\cdots, \psi_\alpha]=A[\varphi_1,\cdots,\varphi_\alpha].$$
      By Proposition \ref{preservation of independence under left multiplication}, the column vectors $\psi_1,\cdots, \psi_\alpha$ are independent mod $\mathcal N.$ Since, $\alpha$ is the order of any maximal independent subset (mod $\mathcal N$) of $\{\varphi_1,\cdots,\varphi_r\}$, for any $\alpha<i\le r$, the $(\alpha+1)$ column vectors $\varphi_1, \cdots,\varphi_\alpha, \varphi_i$ are not independent mod $\mathcal N$. Thus, again by Proposition \ref{preservation of independence under left multiplication}, we have that $\psi_1,\cdots,\psi_\alpha,\psi_i$ are not independent mod $\mathcal N$ for each $\alpha <i\le r$. Therefore, we conclude that $\{\psi_1,\cdots,\psi_\alpha\}$ is a maximal independent subset (mod $\mathcal N$) of $\{\psi_1,\cdots,\psi_r\}.$ Hence, $\alpha=\beta.$
\end{proof}

\bigskip

\begin{corollary}
  Let $A$ be an $l\times m$ ($l \ge m$) matrix-valued function with entries in $\mathcal N^\infty$ and such that $\hbox{Rank}A=m$. For $\Phi= [\varphi_1 , \cdots , \varphi_r] \in L^2_{M_{m \times r}}$, let $\psi_i$ denote the $i$-th column of $\Psi:= A \Phi$, then $$\hbox{ind}_{\mathcal N}\{\varphi_1, \cdots, \varphi_r\}=\hbox{ind}_{\mathcal N}\{\psi_1, \cdots, \psi_r\}.$$
\end{corollary}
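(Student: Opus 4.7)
The plan is to reduce the rectangular-$A$ case to the square-$A$ case already handled in Theorem \ref{preservation of independence under left multiplication 2}. The reduction proceeds by extracting an $m\times m$ submatrix of $A$ with nonzero determinant, padding it out to an invertible $l\times l$ block matrix, and correspondingly padding $\Phi$ with zero rows so that the enlarged matrix product still equals $\Psi$.

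First, since $\hbox{Rank}\,A=m$ and every entry of $A$ is of bounded type, the same row-selection argument used at the start of the proof of Corollary \ref{independency multiplied by matrix_2} produces $m$ rows of $A$ whose $m\times m$ submatrix $A_1$ satisfies $\det A_1\ne 0$ almost everywhere on $\mathbb T$. A permutation of rows of $A$ merely permutes the coordinates of each $\psi_i$ and hence leaves $\hbox{ind}_{\mathcal N}\{\psi_1,\dots,\psi_r\}$ unchanged, so I may assume $A=\begin{pmatrix}A_1\\ A_2\end{pmatrix}$ with $A_2\in \mathcal N^\infty_{M_{(l-m)\times m}}$. Next, I extend everything to size $l$ by setting
$$A^\prime:=\begin{pmatrix}A_1 & 0\\ A_2 & I\end{pmatrix}\in \mathcal N^\infty_{M_{l\times l}},\qquad \Phi^\prime:=\begin{pmatrix}\Phi\\ 0\end{pmatrix}\in L^2_{M_{l\times r}},$$
with $I$ the $(l-m)\times(l-m)$ identity, so that $\det A^\prime=\det A_1\ne 0$ and a direct block computation gives $A^\prime\Phi^\prime=A\Phi=\Psi$. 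Theorem \ref{preservation of independence under left multiplication 2} then applies to the square pair $(A^\prime,\Phi^\prime)$ and yields
$$\hbox{ind}_{\mathcal N}\{\psi_1,\dots,\psi_r\}=\hbox{ind}_{\mathcal N}\{\varphi^\prime_1,\dots,\varphi^\prime_r\},$$
where $\varphi^\prime_i$ is the $i$-th column of $\Phi^\prime$.

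It remains to observe that appending zero coordinates to a set of vector-valued functions does not affect its independency modulo $\mathcal N$: for bounded-type scalars $a_1,\dots,a_r$, one has $\sum_i a_i\varphi^\prime_i\in \mathcal N_{\mathbb C^l}$ if and only if $\sum_i a_i\varphi_i\in \mathcal N_{\mathbb C^m}$, because the bottom $l-m$ coordinates of $\sum_i a_i\varphi^\prime_i$ vanish identically and thus lie trivially in $\mathcal N$. Hence $\hbox{ind}_{\mathcal N}\{\varphi^\prime_1,\dots,\varphi^\prime_r\}=\hbox{ind}_{\mathcal N}\{\varphi_1,\dots,\varphi_r\}$, which combined with the previous equality gives the corollary. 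There is no real obstacle once Theorem \ref{preservation of independence under left multiplication 2} is in hand: the whole argument is a routine block-matrix padding, and the only points needing a line of justification are that the row permutation, the zero block-column insertion, and the identity-block insertion all preserve the two hypotheses ``entries in $\mathcal N^\infty$'' and ``$\det\ne 0$.''
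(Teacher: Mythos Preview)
Your proof is correct and follows essentially the same route as the paper: extract an $m\times m$ submatrix of $A$ with nonvanishing determinant, pad $A$ to an $l\times l$ block-triangular matrix $A'$ and $\Phi$ to $\Phi'$ by appending zero rows, then apply Theorem~\ref{preservation of independence under left multiplication 2} and observe that zero-padding does not affect $\hbox{ind}_{\mathcal N}$. The only cosmetic differences are notational (the paper writes $B,C$ for your $A_1,A_2$) and that the paper dismisses the row permutation with a single ``without loss of generality,'' whereas you spell out why the permutation is harmless.
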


\begin{proof}
  Since we assumed $\hbox{Rank} A =m$, without loss of generality, we may assume that $\det \left[\begin{array}{c}
                 a_{1} \\
                 \vdots \\
                 a_{m}
               \end{array}
  \right] \ne 0$ on a set of positive measure on $\mathbb T$. Set $B:= \left[\begin{array}{c}
                 a_{1} \\
                 \vdots \\
                 a_{m}
               \end{array}
  \right] $, then we can write $A=\left[
                                         \begin{array}{c}
                                           B\\
                                           C \\
                                         \end{array}
                                       \right]
  $, where, $C$ is an $(l-m)\times m$ matrix function. Since $\det B$ is a bounded type function, $\det B\ne 0$ almost everywhere on $\mathbb T.$ Let $\varphi_i^\prime \in L^2_{\mathbb C^l}
  $ be the natural imbedding of $\varphi_i \in L^2_{\mathbb C^m}$ into $L^2_{\mathbb C^l}$, that is, for $\varphi_i=(\varphi_{i1},\cdots,\varphi_{i,m})^t$, let $\varphi^\prime_i=(\varphi_{i1},\cdots, \varphi_{i,m},0,\cdots,0)^t$. Next, define an $l\times l$ matrix $A^\prime := \left[
                        \begin{array}{cc}
                          B & 0 \\
                          C & I \\
                        \end{array}
                      \right]
  $, then
  \begin{equation}\label{imbedding}
 [\psi_1,\cdots, \psi_r]=A^\prime[\varphi_1^\prime,\cdots,\varphi_r^\prime].
  \end{equation}

  Note that $\det A^\prime= \det B \cdot \det I \ne 0$ in the first equation of (\ref{imbedding}). Thus, by Theorem \ref{preservation of independence under left multiplication 2}, we have
  \begin{equation}\label{e2}
  \hbox{ind}_{\mathcal N}\{\psi_1,\cdots, \psi_r\}=\hbox{ind}_{\mathcal N}\{\varphi^\prime_1,\cdots,\varphi^\prime_r\}.
  \end{equation}
 Since $\hbox{ind}_{\mathcal N}\{\varphi^\prime_1,\cdots,\varphi^\prime_r\}=\hbox{ind}_{\mathcal N}\{\varphi_1,\cdots,\varphi_r\}$ is obvious, we finally have
  $$
  \hbox{ind}_{\mathcal N}\{\psi_1,\cdots, \psi_r\}=\hbox{ind}_{\mathcal N}\{\varphi_1,\cdots,\varphi_r\}.
  $$
\end{proof}

\section{Examples of kernels of block Hankel operators}

\bigskip
In this section, some concrete examples of kernels of block Hankel operators will be presented to help understand the results in Section 2.
For a matrix with scalar entries, it is well-known that a maximal set of linearly independent columns and a maximal set of linearly independent rows have the same order. But, for the independency modulo Nevanlinna class, that phenomenon does not happen. Let's begin with a few lemmas.
The following frequently used lemma was proved by J. Long \cite{Lo}.

\begin{lemma}\label{infinite intersection of invariant subspaces}
For a nonconstant inner function $\theta$, $\cap_{n=1}^\infty \theta^n H^2=(0)$.
\end{lemma}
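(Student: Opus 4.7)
The plan is to pick an arbitrary $f\in \bigcap_{n=1}^\infty \theta^n H^2$ and show that $f$ vanishes identically on $\mathbb D$, which forces $f=0$ as an element of $H^2$. For each $n$, by the definition of the intersection, we can write $f=\theta^n g_n$ for some $g_n\in H^2$. The first observation I would use is that $\theta$ is inner, so $|\theta(\zeta)|=1$ for almost every $\zeta \in \mathbb T$; consequently $\|g_n\|_{H^2}=\|\theta^n g_n\|_{H^2}=\|f\|_{H^2}$, so the sequence $\{g_n\}$ is uniformly bounded in $H^2$.

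Next I would evaluate on the open disk. For any fixed $z\in\mathbb D$, the reproducing kernel estimate for $H^2$ yields $|g_n(z)|\le \|g_n\|_{H^2}/\sqrt{1-|z|^2}=\|f\|_{H^2}/\sqrt{1-|z|^2}$, which is a bound independent of $n$. Therefore
\begin{equation*}
|f(z)|=|\theta(z)|^n\,|g_n(z)|\le \frac{\|f\|_{H^2}}{\sqrt{1-|z|^2}}\,|\theta(z)|^n.
\end{equation*}

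The remaining ingredient is that $|\theta(z)|<1$ for every $z\in\mathbb D$. Since $\theta\in H^\infty$ with $|\theta|=1$ a.e.\ on $\mathbb T$, we have $\|\theta\|_\infty=1$ and hence $|\theta(z)|\le 1$ on $\mathbb D$. If $|\theta(z_0)|=1$ for some $z_0\in\mathbb D$, the maximum modulus principle would force $\theta$ to be a unimodular constant, contradicting the nonconstancy hypothesis. Thus $|\theta(z)|<1$ on $\mathbb D$, so letting $n\to\infty$ in the inequality above gives $f(z)=0$ for every $z\in\mathbb D$. Since $f\in H^2$ is determined by its values on $\mathbb D$ (equivalently, by its radial limits which equal $0$ a.e.), we conclude $f=0$.

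I don't anticipate a genuine obstacle in this argument; the entire proof reduces to combining three standard facts about inner functions (isometric multiplication on $H^2$, the pointwise bound $|\theta|<1$ on $\mathbb D$, and the reproducing kernel estimate). The only point that might look like a subtlety is justifying $|\theta(z)|<1$ on $\mathbb D$, which is dispatched by one application of the maximum modulus principle using the hypothesis that $\theta$ is nonconstant.
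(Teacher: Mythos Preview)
Your argument is correct and complete. The paper itself does not prove this lemma at all; it simply attributes the result to Long's thesis \cite{Lo} and moves on. Your proof---combining the isometry of multiplication by an inner function, the reproducing-kernel pointwise bound, and the strict inequality $|\theta(z)|<1$ on $\mathbb D$ from the maximum modulus principle---is the standard route and needs no further justification.
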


\bigskip

The next lemma shows an example of functions that are not of bounded type.

\begin{lemma}\label{nonbounded typeness sqrt theta}
For a Blaschke factor $B_\alpha:=\frac{z-\alpha}{1-\overline \alpha z}$ ($\alpha \in \mathbb D$), let $B_\alpha^{\frac{1}{2}}$ be an arbitrary function on the unit circle such that $(B_\alpha^{\frac{1}{2}}(z))^2=B_\alpha(z)$. Then $B_\alpha^{\frac{1}{2}}$ is not of bounded type.
\end{lemma}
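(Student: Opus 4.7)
The plan is to argue by contradiction via a parity-of-order-of-vanishing argument at the point $\alpha$. Suppose, toward a contradiction, that $B_\alpha^{1/2}$ is of bounded type. Then by the definition of $\mathcal N$ (and the representation $f/g = h/\theta$ stated in the introduction), I may write $B_\alpha^{1/2} = f/g$ with $f,g \in H^\infty$ and $g\not\equiv 0$. In fact, since $|B_\alpha^{1/2}|=1$ almost everywhere on $\mathbb T$, I can take $f$ and $g$ themselves to be inner, but this refinement is not needed; what matters is only that $f$ and $g$ admit analytic extensions to $\mathbb D$.

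Squaring the identity $B_\alpha^{1/2} = f/g$ on $\mathbb T$ yields $f^2 = B_\alpha\, g^2$ as functions in $H^\infty(\mathbb T)$, and by uniqueness of the harmonic/analytic extension this identity continues to hold on the whole disk $\mathbb D$:
\begin{equation*}
  f(z)^2 \;=\; \frac{z-\alpha}{1-\overline\alpha z}\, g(z)^2, \qquad z \in \mathbb D.
\end{equation*}
Now I would compare orders of vanishing at the point $\alpha \in \mathbb D$. Since $f,g \in H^\infty$ are analytic on $\mathbb D$ and $g$ is not identically zero, I can write $f(z) = (z-\alpha)^k F(z)$ and $g(z) = (z-\alpha)^m G(z)$ with $F,G \in H(\mathbb D)$, $F(\alpha)\neq 0$ (or $f \equiv 0$, a case I handle separately below) and $G(\alpha)\neq 0$, and with integers $k,m \ge 0$. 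The left-hand side of the displayed identity then has order of vanishing $2k$ at $\alpha$, an even integer. The right-hand side has order $2m+1$ at $\alpha$, because the factor $1-\overline\alpha z$ does not vanish at $\alpha$ (here I use $|\alpha|<1$). This is an odd integer, and the equation $2k = 2m+1$ is impossible.

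It remains only to dismiss the degenerate case $f\equiv 0$, which forces $B_\alpha^{1/2} = 0$ almost everywhere on $\mathbb T$, contradicting $|B_\alpha^{1/2}|=1$ a.e. Hence no such representation $B_\alpha^{1/2} = f/g$ exists, so $B_\alpha^{1/2}$ is not of bounded type. There is no real obstacle in this argument; the only point that needs a moment of care is the passage from the boundary identity $f^2 = B_\alpha g^2$ on $\mathbb T$ to the same identity on $\mathbb D$, which is justified because both sides lie in $H^\infty$ and an $H^\infty$ function is determined on $\mathbb D$ by its boundary values.
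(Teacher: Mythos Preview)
Your proof is correct and takes a genuinely different route from the paper's. The paper writes $B_\alpha^{1/2}=\theta_1'/\theta_1$ with $\theta_1,\theta_1'$ coprime inner, obtains $B_\alpha\theta_1^2=(\theta_1')^2$, and then iteratively peels off copies of $B_\alpha$ from $\theta_1$ and $\theta_1'$, concluding that both lie in $\bigcap_n B_\alpha^n H^2=(0)$ via Lemma~\ref{infinite intersection of invariant subspaces}. Your argument instead passes the identity $f^2=B_\alpha g^2$ into the disk and reads off a parity obstruction on the order of vanishing at $\alpha$. This is shorter and entirely self-contained: it avoids the coprime inner representation of unimodular bounded-type functions and bypasses Lemma~\ref{infinite intersection of invariant subspaces} altogether. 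The trade-off is that your method is tied to having an honest interior zero to count, so it is specific to Blaschke factors (which is all the lemma claims), whereas the paper's divisibility-and-intersection scheme is phrased in a way that suggests how one might attack square roots of other ``prime'' inner functions.
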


\begin{proof}
To prove the statement by deriving a contradiction, suppose that $B_\alpha^{\frac{1}{2}}$ is of bounded type. Then since $B_\alpha^{\frac{1}{2}}$ is a unimodular function, we may write
\begin{equation}\label{if bounded type}
B_\alpha^{\frac{1}{2}}=\frac{\theta_1^\prime}{\theta_1}
\end{equation}
 for two inner functions $\theta_1$ and $\theta_1^\prime$ that are mutually coprime.
So we have
 \begin{equation}\label{if bounded type 2}
B_\alpha {\theta_1}^2={\theta_1^\prime}^2.
\end{equation}
Since $\theta_1$ and $\theta_1^\prime$ are mutually coprime, $\theta_1^\prime$ contains $B_\alpha$ as an inner factor. Let $\theta_1^\prime= \theta_2^\prime B_\alpha$ for an inner function $\theta_2^\prime$, then plugging it into (\ref{if bounded type 2}), we have

\begin{equation}\label{if bounded type 3}
{\theta_1}^2=(\theta_2^\prime)^2 B_\alpha.
\end{equation}

Using the same argument as the above, we find that $B_\alpha$ is an inner factor of $\theta_1$. Let $\theta_1= \theta_2 B_\alpha$ for an inner function $\theta_2$ and plug it into (\ref{if bounded type 3}), then we have

\begin{equation}\label{if bounded type 4}
B_\alpha \theta_2^2 ={\theta_2^\prime}^2,
\end{equation}
which is of the same form as (\ref{if bounded type 2}) with $\theta_1$ and $\theta_1^\prime$ replaced by $\theta_2$ and $\theta_2^\prime$, respectively. Repeating this process, we find that $\theta_1, \theta_1^\prime \in (B_\alpha)^n H^2$ for every natural number $n$. But, this is a contradiction because $\cap_{n=1}^\infty (B_\alpha)^nH^2=\{0\}$ by Lemma \ref{infinite intersection of invariant subspaces}. Therefore, we conclude that $B_\alpha^{\frac{1}{2}}$ is not of bounded type. The proof is complete.
\end{proof}

\begin{example}\label{zero kernel of Hankel}
Let $\Phi=\left(
            \begin{array}{cc}
              z^{\frac{1}{2}} & (\frac{z-\frac{1}{2}}{1-\frac{1}{2}z})^{\frac{1}{2}} \\
              0 & 0 \\
            \end{array}
          \right).
$ Then, the two columns of $\Phi$ are independent modulo Nevanlinn class, but, the rows of $\Phi$ are not, where, the row vectors are understood as the column vectors of $\Phi^t$.
\end{example}

\begin{proof}
Let $(f,g)^t \in H^2_{\mathbb C^n}$ be in the kernel of $H_\Phi$. Then we have
$$
z^{\frac{1}{2}}f+(\frac{z-\frac{1}{2}}{1-\frac{1}{2}z})^{\frac{1}{2}}g = h
$$
for some $h \in H^2$. Then by manipulating the equation, we have
\begin{equation}\label{left nonbdd right bdd}
 2(\frac{z-\frac{1}{2}}{1-\frac{1}{2}z})^{\frac{1}{2}}gh=(\frac{z-\frac{1}{2}}{1-\frac{1}{2}z})g^2  +h^2 - zf^2.
\end{equation}

If $gh\ne 0$, then by Lemma \ref{nonbounded typeness sqrt theta}, the left hand side of (\ref{left nonbdd right bdd}) is not a bounded type function, while the right hand side is of bounded type obviously. Thus we have $gh=0$. If $g=0$, then from (\ref{left nonbdd right bdd}), $h^2=zf^2$. Using the same argument as we proved Lemma \ref{nonbounded typeness sqrt theta}, we can prove $f=h=0$. If $h=0$, then plugging it into (\ref{left nonbdd right bdd}) we have
$$(\frac{z-\frac{1}{2}}{1-\frac{1}{2}z})g^2  = zf^2.$$
 Again, using the same arguments as Lemma \ref{nonbounded typeness sqrt theta}, one can verify $f=g=0$. So we have $f=g=0$ in either case, that is, $\ker H_\Phi=(0)$. Therefore, by Proposition \ref{equivalent cond of ind mod N}, the columns of $\Phi$ are independent modulo Nevanlinna class. But, for the row vectors of $\Phi$, which we identify as the column vectors of $\Phi^t$, it is clear that they are not independent modulo Nevanlinna class because one of the vectors is a zero vector.
 \end{proof}

 Actually, it is easy to verify that $\ker H_{\Phi^t}=0\oplus H^2$. Using a matrix inner function we can write $\ker H_{\Phi^t}= \left[
                                                \begin{array}{c}
                                                  0 \\
                                                  1 \\
                                                \end{array}
                                              \right]H^2$,
where we note that  $\left[
                                                \begin{array}{c}
                                                  0 \\
                                                  1 \\
                                                \end{array}
                                              \right]
$ is a matrix inner function of size $2\times 1$.
 A similar, but, slightly more manipulated method will prove that for three distinct values $\alpha_1, \alpha _2, \alpha_3 \in \mathbb D$, $\{ B^\frac{1}{2}_{\alpha_1},B^\frac{1}{2}_{\alpha_2},B^\frac{1}{2}_{\alpha_3}\}$ is independent modulo Nevanlinna class. The following question seems plausible, but it seems not easy to answer it in the affirmative.
\begin{question}
 Is the set $\{B^\frac{1}{2}_{\alpha_1},\cdots, B^\frac{1}{2}_{\alpha_r}\}$ independent modulo Nevanlinna class for arbitrary $r$ distinct complex numbers $\alpha_i \in \mathbb D?$
\end{question}

On the other hand, it is easy to form a set $\{\varphi_1, \cdots, \varphi_r\}$ of independent vectors(mod $\mathcal N$) in $L^\infty_{\mathbb C^n}$ when $r\le n$. Indeed, if $\varphi_i=(\varphi_{i1},\cdots, \varphi_{in})^t$ such that $\varphi_{ij} \in \mathcal N^\infty$ if $i\ne j$ and $\varphi_{ii} \in L^\infty \setminus\mathcal N^\infty$, then $\varphi_1,\cdots,\varphi_r$ are easily verified to be independent modulo Nevanlinna class.
Then, is there a set of scalar-valued functions $\{a_1, \cdots, a_r\} \subset L^\infty$ that is independent modulo Nevanlinna class for arbitrary natural number $r$? The answer is `yes' and it will be shown in the next section.

\bigskip

The following result was shown in \cite{GHR}.

\begin{proposition}
\cite{GHR}\label{kerofHankel with square inner adjoint0}
   Let $\Phi\in L^\infty_{M_n}$, then $\ker H_\Phi= \Theta H^2_{\mathbb C^n}$ for some square matrix inner function $\Theta\in H^\infty_{M_n}$ if and only if $\Phi= A \Theta^*$, where, $A\in H^\infty_{M_n}$ and $A$ and $\Theta$ don't have a  common nonconstant matrix inner function.
\end{proposition}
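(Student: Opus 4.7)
The plan is to build on the basic identity
$\ker H_\Phi = \{f \in H^2_{\mathbb C^n} : \Phi f \in H^2_{\mathbb C^n}\}$ (valid because $J$ is an involution on $L^2_{\mathbb C^n}$ and $P$ projects onto $H^2_{\mathbb C^n}$), combined with Beurling--Lax--Halmos and the pointwise unitarity $\Theta\Theta^* = \Theta^*\Theta = I$ a.e.\ on $\mathbb T$ for a square inner $\Theta$. Throughout, I interpret ``common nonconstant matrix inner function'' as a common right inner divisor, since the only ambiguity in writing $\Phi = A\Theta^*$ is the replacement $(\Theta,A) \mapsto (\Theta W, AW)$ for an $n\times n$ inner $W$, which leaves $\Phi$ unchanged because $WW^* = I$.

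For the ($\Leftarrow$) direction, assume $\Phi = A\Theta^*$ with $A,\Theta$ right-coprime. First I would verify $\Theta H^2_{\mathbb C^n} \subseteq \ker H_\Phi$: for $f = \Theta g$, $\Phi f = A\Theta^*\Theta g = Ag \in H^2_{\mathbb C^n}$. By Beurling--Lax--Halmos write $\ker H_\Phi = \Theta' H^2_{\mathbb C^r}$ with $\Theta'\in H^\infty_{M_{n\times r}}$ inner and $r\le n$. The inclusion forces $\Theta = \Theta'W$ for some $W\in H^\infty_{M_{r\times n}}$; then $I = \Theta^*\Theta = W^*\Theta'^*\Theta' W = W^*W$, so $r\ge n$, hence $r=n$ and $W$ is $n\times n$ inner. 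Next, since every column of $\Theta'$ lies in $\ker H_\Phi$, the matrix $\Phi\Theta'$ has columns in $H^2_{\mathbb C^n}\cap L^\infty = H^\infty$, so $B := \Phi\Theta'\in H^\infty_{M_n}$; and $B = A\Theta^*\Theta' = AW^*\Theta'^*\Theta' = AW^*$, giving $A = BW$. Thus $W$ is a common right inner factor of $A$ and $\Theta=\Theta'W$, so by hypothesis $W$ is a constant unitary and $\Theta' H^2_{\mathbb C^n} = \Theta H^2_{\mathbb C^n}$.

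For the ($\Rightarrow$) direction, assume $\ker H_\Phi = \Theta H^2_{\mathbb C^n}$. Every column of $\Theta$ lies in the kernel, so $\Phi\Theta$ has columns in $H^2\cap L^\infty$; hence $A := \Phi\Theta \in H^\infty_{M_n}$, and $A\Theta^* = \Phi\Theta\Theta^* = \Phi$. For right-coprimeness, suppose $A = A_1 W$ and $\Theta = \Theta_1 W$ for some nonconstant $n\times n$ inner $W$. Then $\Phi = A_1\Theta_1^*$, and for every $g\in H^2_{\mathbb C^n}$, $\Phi\Theta_1 g = A_1\Theta_1^*\Theta_1 g = A_1 g \in H^2_{\mathbb C^n}$, so $\Theta_1 g \in \ker H_\Phi = \Theta H^2_{\mathbb C^n}$, giving $\Theta_1 = \Theta V$ for some inner $V\in H^\infty_{M_n}$. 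Substituting yields $\Theta = \Theta VW$, hence $VW = I$ a.e.\ on $\mathbb T$ and $V = W^*$. But $V$ is analytic while $W^*$ is anti-analytic, so $W$ must be constant, contradicting nontriviality.

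The main obstacle is the ($\Leftarrow$) direction, specifically upgrading the inclusion $\Theta H^2_{\mathbb C^n}\subseteq\ker H_\Phi$ to an equality. The rigidity comes from the \emph{squareness} of $\Theta$, which makes $\Theta\Theta^* = I$ and permits solving for $W$ analytically out of $B = AW^*$; the same squareness underlies the analytic/anti-analytic dichotomy that kills any nontrivial common right factor. Once that rigidity is identified, the remaining steps are routine bookkeeping with $\Theta^*\Theta = I$ and the characterisation of $\ker H_\Phi$.
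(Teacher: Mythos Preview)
The paper does not supply its own proof of this proposition; it is quoted verbatim from \cite{GHR} and immediately used. So there is no argument in the paper to compare yours against.

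Your proof is correct and is essentially the standard one. A couple of minor remarks. First, your decision to read ``common nonconstant matrix inner function'' as a common \emph{right} inner divisor is the appropriate one here (and you justify it well via the ambiguity $(\Theta,A)\mapsto(\Theta W,AW)$); note that the paper itself only defines \emph{left} inner divisors earlier, so your explicit clarification is useful rather than superfluous. Second, in the last step of the $(\Rightarrow)$ direction you say ``$V$ is analytic while $W^*$ is anti-analytic''; to be precise, $V=W^*$ with $V\in H^\infty_{M_n}$ means each entry $\overline{w_{ji}}$ of $W^*$ lies in $H^\infty$, and since $w_{ji}\in H^\infty$ as well, each $w_{ji}$ is constant, so $W$ is a constant unitary---exactly the contradiction you want. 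The argument is fine; just make that entrywise reasoning explicit when you write it up.
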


 For a scalar inner function $\theta$, $\ker H_{\overline \theta}= \theta H^2$ is easily verified. For matrix inner functions, the following is a special case of the above proposition.

   \begin{corollary}
   \label{kerofHankel with square inner adjoint}
   If $\Theta$ is an $n\times n$ matrix-valued inner function, then $\ker H_{\Theta^*}= \Theta H^2_{\mathbb C^n}$.
   \end{corollary}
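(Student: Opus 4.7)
The plan is to give a direct verification, which is especially clean because $\Theta$ is \emph{square} and inner. The key observation is that for an $n\times n$ inner function we have not only $\Theta^*\Theta=I$ on $\mathbb{T}$ (which is the definition of inner) but also $\Theta\Theta^*=I$ on $\mathbb{T}$; this is immediate because a square matrix $\Theta(\zeta)$ satisfying $\Theta(\zeta)^*\Theta(\zeta)=I$ is unitary, so its right inverse equals its left inverse. Both inclusions then follow from this unitarity on the unit circle.

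For the inclusion $\Theta H^2_{\mathbb{C}^n}\subseteq\ker H_{\Theta^*}$, I take an arbitrary $f=\Theta g$ with $g\in H^2_{\mathbb{C}^n}$. Since $\Theta\in H^\infty_{M_n}$, $f$ really lies in $H^2_{\mathbb{C}^n}$, and using $\Theta^*\Theta=I$ a.e.\ on $\mathbb{T}$, I compute $\Theta^*f=\Theta^*\Theta g=g\in H^2_{\mathbb{C}^n}$. Therefore $(I-P)(\Theta^*f)=0$, so $H_{\Theta^*}f=J(I-P)(\Theta^*f)=0$, giving $f\in\ker H_{\Theta^*}$.

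For the reverse inclusion $\ker H_{\Theta^*}\subseteq\Theta H^2_{\mathbb{C}^n}$, I take $f\in\ker H_{\Theta^*}$. By the definition of the block Hankel operator, $(I-P)(\Theta^*f)=0$, so $g:=\Theta^*f$ belongs to $H^2_{\mathbb{C}^n}$ (noting that $\Theta^*\in L^\infty$ and $f\in H^2_{\mathbb{C}^n}$, so $\Theta^*f\in L^2_{\mathbb{C}^n}$). Multiplying on the left by $\Theta$ and using $\Theta\Theta^*=I$ a.e., I get $\Theta g=\Theta\Theta^*f=f$, so $f\in\Theta H^2_{\mathbb{C}^n}$, which completes the proof.

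There is essentially no obstacle; the only thing one has to pause over is the identity $\Theta\Theta^*=I$, which requires squareness and is the reason the corollary is stated specifically for $n\times n$ inner functions. If one preferred, the result could alternatively be deduced as a one-line consequence of Proposition \ref{kerofHankel with square inner adjoint0} by writing $\Theta^*=I\cdot\Theta^*$ and observing that the identity matrix $I$ shares no nonconstant matrix inner left divisor with $\Theta$ (since any such divisor $\Delta$ of $I$ would force $\Delta^*\in H^\infty_{M_n}$, hence $\Delta$ to be a unitary constant), but the direct argument above is shorter and self-contained.
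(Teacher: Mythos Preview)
Your proof is correct. The paper does not provide a separate argument for this corollary at all; it simply remarks that the statement is ``a special case of the above proposition,'' i.e., of Proposition~\ref{kerofHankel with square inner adjoint0}, taking $\Phi=\Theta^*=I\cdot\Theta^*$ with $A=I$. Your primary argument is genuinely different: rather than invoking the cited result from \cite{GHR}, you verify both inclusions directly from the pointwise unitarity of $\Theta(\zeta)$ on $\mathbb{T}$ (so that $\Theta\Theta^*=I$ as well as $\Theta^*\Theta=I$). This is more elementary and entirely self-contained; the paper's route, by contrast, packages the work into the coprimeness criterion of Proposition~\ref{kerofHankel with square inner adjoint0}. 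You also correctly identify the paper's intended deduction in your final paragraph, including the observation that any left inner divisor of $I$ must be a unitary constant, so nothing is missing.
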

 But, this seemingly natural phenomenon does not always happen when the inner function $\Theta$ is not square.
 For example, for nonconstant scalar inner functions $\theta_1$ and $\theta_2$, consider a $2\times 1$ matrix inner function $\Theta:=[\frac{1}{\sqrt2}\theta_1 , \frac{1}{\sqrt2}\theta_2]^t$. Since each entry of $\Theta^*=[\frac{1}{\sqrt2}\overline \theta_1 , \frac{1}{\sqrt2}\overline \theta_2]$ is of bounded type, $\hbox{ind}_{\mathcal N}\{\frac{1}{\sqrt2}\overline \theta_1, \frac{1}{\sqrt2}\overline \theta_2\}$ is 0. Thus, by Theorem \ref{ker of block Hankel}, there exists a $2\times2$ matrix inner function $\Theta_1$ such that $\ker H_{\Theta^*}=\Theta_1 H^2_{\mathbb C^2}$. Obviously, $\Theta_1 \ne \Theta$ because they have different sizes.
 Now let's consider the following natural questions.

\medskip
\begin{equation}\label{questions}
\begin{array}{l}
        \hbox{Q1. What is the $2\times 2$ matrix $\Theta_1$ concretely?} \\
       \hbox{Q2. For what matrix function $\Phi$, $\ker H_\Phi = \Theta H^2 = [\frac{1}{\sqrt2}\theta_1 , \frac{1}{\sqrt2}\theta_2]^t H^2$?}
      \end{array}
\end{equation}

\medskip
To give answers to these questions, we need to consider the greatest common inner divisor(GCD) of $\theta_1$ and $\theta_2$. Let $\theta_1=\theta \theta_1^\prime$ and $\theta_2=\theta \theta_2^\prime$, where $\theta$ is the GCD of $\theta_1$ and $\theta_2$ in the sense that $\theta_1^\prime$ and $\theta_2^\prime$ have no nonconstant common inner devisor.
First note that $\ker H_{[\frac{1}{\sqrt 2}\overline \theta_1 , \frac{1}{\sqrt 2}\overline \theta_2]} = \ker H_{[\overline \theta_1 , \overline \theta_2]} $ and let $\left(
                                        \begin{array}{c}
                                          f \\
                                          g \\
                                        \end{array}
                                      \right) \in \ker H_{[\overline \theta_1 , \overline \theta_2]}, $ then

                                      $$[\overline \theta_1 , \overline \theta_2]\left(
                                        \begin{array}{c}
                                          f \\
                                          g \\
                                        \end{array}
                                      \right)= \overline \theta_1 f +\overline \theta_2 g =h $$
for some $h\in H^2$. Multiplying this equation by $\theta \theta_1^\prime \theta_2^\prime$, we have

\begin{equation}\label{ker of 1 by 2}
\theta_2^\prime f +\theta_1^\prime g =\theta \theta_1^\prime \theta_2^\prime h.
\end{equation}
Since $\theta_1^\prime$ and $\theta_2^\prime$ are coprime, $f=\theta_1^\prime f^\prime$ and $g= \theta_2^\prime g^\prime$ for some $f^\prime,g^\prime \in H^2$.
Plugging it into (\ref{ker of 1 by 2}), we have

\begin{equation}
f^\prime + g^\prime =\theta h.
\end{equation}

Therefore, $$\begin{array}{ll}
             \left(
                                        \begin{array}{c}
                                          f \\
                                          g \\
                                        \end{array}
                                      \right)& = \left(
                                        \begin{array}{c}
                                          \theta_1^\prime f^\prime \\
                                          \theta_2^\prime (\theta h - f^\prime) \\
                                        \end{array}
                                      \right)\\
                                      & = \left[
                                                 \begin{array}{cc}
                                                   \theta_1^\prime & 0 \\
                                                   0 & \theta_2^\prime\\
                                                 \end{array}
                                               \right]
                                      \left(\left(
                                        \begin{array}{c}
                                          \frac{1}{2}\theta h \\
                                          \frac{1}{2}\theta h \\
                                        \end{array}
                                      \right)
                                      +
                                      \left(
                                        \begin{array}{c}
                                          f^\prime -  \frac{1}{2}\theta h\\
                                          - f^\prime +  \frac{1}{2}\theta h \\
                                        \end{array}
                                      \right)\right) \\
               &  = \left[
                                                 \begin{array}{cc}
                                                   \theta_1^\prime & 0 \\
                                                   0 & \theta_2^\prime\\
                                                 \end{array}
                                               \right]\left[
                                                 \begin{array}{cc}
                                                   \frac{1}{2}\theta- \frac{1}{2} & \frac{1}{2}\theta + \frac{1}{2} \\
                                                  \frac{1}{2}\theta+ \frac{1}{2} & \frac{1}{2}\theta - \frac{1}{2}\\
                                                 \end{array}
                                               \right]\left(
                                                        \begin{array}{c}
                                                          f_1 \\
                                                          g_1\\
                                                        \end{array}
                                                      \right)
                                               ,
            \end{array}
$$

                                               where, $f_1= -f^\prime +\frac{1}{2}\theta h +\frac{h}{2}$ and $g_1=f^\prime -\frac{1}{2}\theta h +\frac{h}{2}$.

                                              A direct calculation shows that $\left[
                                                 \begin{array}{cc}
                                                   \frac{1}{2}\theta- \frac{1}{2} & \frac{1}{2}\theta + \frac{1}{2} \\
                                                  \frac{1}{2}\theta+ \frac{1}{2} & \frac{1}{2}\theta - \frac{1}{2}\\
                                                 \end{array}
                                               \right]$ is a matrix-valued inner function.

                                                Therefore,
                                                $$\ker H_{[\overline \theta_1 , \overline \theta_2]}\subset\Theta_1 H^2_{\mathbb C^2},
                                                $$
                                                  where,

\begin{equation}\label{theta1}
\Theta_1= \left[
                                                 \begin{array}{cc}
                                                   \theta_1^\prime & 0 \\
                                                   0 & \theta_2^\prime\\
                                                 \end{array}
                                               \right]\left[
                                                 \begin{array}{cc}
                                                   \frac{1}{2}\theta- \frac{1}{2} & \frac{1}{2}\theta + \frac{1}{2} \\
                                                  \frac{1}{2}\theta+ \frac{1}{2} & \frac{1}{2}\theta - \frac{1}{2}\\
                                                 \end{array}
                                               \right].
\end{equation}

 The opposite inclusion($\ker H_{[\overline \theta_1 , \overline \theta_2]}\subset\Theta_1 H^2_{\mathbb C^2}$) is easily proved because for $f,g \in H^2$, $$
                                               \begin{array}{ll}
                                                \Theta^* \Theta_1\left(
                                                        \begin{array}{c}
                                                          f \\
                                                          g \\
                                                        \end{array}
                                                      \right) & =\ [\overline \theta_1 , \overline \theta_2]\left[
                                                 \begin{array}{cc}
                                                   \theta_1^\prime & 0 \\
                                                   0 & \theta_2^\prime\\
                                                 \end{array}
                                               \right]\left[
                                                 \begin{array}{cc}
                                                   \frac{1}{2}\theta- \frac{1}{2} & \frac{1}{2}\theta + \frac{1}{2} \\
                                                  \frac{1}{2}\theta+ \frac{1}{2} & \frac{1}{2}\theta - \frac{1}{2}\\
                                                 \end{array}
                                               \right]\left(
                                                        \begin{array}{c}
                                                          f \\
                                                          g \\
                                                        \end{array}
                                                      \right)\\
                                                       & =[\overline \theta_1 , \overline \theta_2]
                                               \left(
                                                        \begin{array}{c}
                                                          \theta_1^\prime(\frac{1}{2}\theta(f+g)-\frac{1}{2}(f-g)) \\
                                                           \theta_2^\prime(\frac{1}{2}\theta(f+g)+\frac{1}{2}(f-g)) \\
                                                        \end{array}
                                                      \right) \\
                                                  & = f+g \in H^2.
                                               \end{array}
                                                $$
Thus, we conclude that
\begin{equation}\label{theta2}
 \ker H_{[\frac{1}{\sqrt 2}\overline {\theta_1},\frac{1}{\sqrt 2}\overline {\theta_1}]}= \Theta_1 H^2_{\mathbb C^2}, \hbox{ where, } \Theta_1= \left[
                                                 \begin{array}{cc}
                                                   \theta_1^\prime & 0 \\
                                                   0 & \theta_2^\prime\\
                                                 \end{array}
                                               \right]\left[
                                                 \begin{array}{cc}
                                                   \frac{1}{2}\theta- \frac{1}{2} & \frac{1}{2}\theta + \frac{1}{2} \\
                                                  \frac{1}{2}\theta+ \frac{1}{2} & \frac{1}{2}\theta - \frac{1}{2}\\
                                                 \end{array}
                                               \right].
\end{equation}

In fact, the matrix inner function
$$\left[
                                                 \begin{array}{cc}
                                                   \frac{1}{2}\theta- \frac{1}{2} & \frac{1}{2}\theta + \frac{1}{2} \\
                                                  \frac{1}{2}\theta+ \frac{1}{2} & \frac{1}{2}\theta - \frac{1}{2}
                                                  \end{array}\right] = \left[
                                                 \begin{array}{cc}
                                                   \frac{1}{2}\theta & \frac{1}{2}\theta \\
                                                  \frac{1}{2}\theta & \frac{1}{2}\theta
                                                  \end{array}\right]+
                                                  \left[
                                                 \begin{array}{cc}
                                                   - \frac{1}{2} & \frac{1}{2} \\
                                                  \frac{1}{2} & - \frac{1}{2}
                                                  \end{array}\right]$$ can be analyzed as $I_\theta P_N + P_{N^\perp}$, where, $N$ is a subspace of $\mathbb C^2$ defined by $N:=\{(\alpha,\alpha): \alpha \in \mathbb C\}$ and $P_N$ and $P_{N^\perp}$ are the orthogonal projection of $\mathbb C^2$ onto $N$ and $N^\perp:= \mathbb C^2\ominus N$, respectively.
\bigskip

For an answer to the second question in (\ref{questions}), consider any function $a \in {L^\infty}$ which is not of bounded type and let $\Phi:=\left[
\begin{array}{cc}
a \overline {\theta_1} & -a \overline{\theta_2} \\
\overline\theta_1 & 0 \\
 \end{array}
 \right].
$
Then $(f,g)^t \in \ker H_\Phi$ if
\begin{equation}\label{ker 2 by 2}
\left[
\begin{array}{cc}
 a \overline {\theta_1} & - a \overline{\theta_2} \\
\overline\theta_1 & 0 \\
 \end{array}
 \right]
\left(
  \begin{array}{c}
    f\\
    g \\
  \end{array}
\right)=\left(
          \begin{array}{c}
             a(\overline{\theta_1^\prime} f -\overline {\theta_2^\prime}g) \\
          \overline {\theta_1} f \\
          \end{array}
        \right)
 = \left(
  \begin{array}{c}
    h\\
    k \\
  \end{array}
\right)
\end{equation}
for some $h,k\in H^2$. Since $a$ is not of bounded type, we have $\overline{\theta_1} f-\overline {\theta_2}g=h=0$ and $\overline \theta_1f= k$. Thus,
 $$\left(
                                         \begin{array}{c}
                                           f \\
                                           g \\
                                         \end{array}
                                       \right)=
                                       \left(
                                         \begin{array}{c}
                                           \theta_1 k \\
                                           \theta_2 k \\
                                         \end{array}
                                       \right) \in
                                       \left[
                                         \begin{array}{c}
                                           \theta_1  \\
                                           \theta_2  \\
                                         \end{array}
                                       \right]H^2,
$$
which implies $\ker H_\Phi \subset \Theta H^2$. The opposite inclusion is easily verified, hence
$$\ker H_\Phi = \Theta H^2, \hbox{ where, }\Theta = \left[
                                                      \begin{array}{c}
                                                        \frac{1}{\sqrt 2}\theta_1 \\
                                                         \frac{1}{\sqrt 2}\theta_2\\
                                                      \end{array}
                                                    \right]
.$$
Is it possible to have $\Theta H^2= \ker H_\Psi$ for some $(1\times 2)$ matrix function $\Psi$? If we set $\Psi:= [a, \overline \theta_2(1- a\theta_2)]$ for a nonbounded type function $ a \in L^\infty\setminus \mathcal N^\infty$, then
a similar argument as the above shows $\ker H_\Psi= \left[
                                                      \begin{array}{c}
                                                        \frac{1}{\sqrt 2}\theta_1 \\
                                                         \frac{1}{\sqrt 2}\theta_2\\
                                                      \end{array}
                                                    \right]H^2$.

In fact, we have a more general result, namely, Theorem \ref{thm rel bet n by 1 inner and symbol}. In its proof, we use the existence of sets of independent scalar-valued $L^2$-functions mod $\mathcal N$ of arbitrary finite order, which will be shown in the next section. Note that if $k_1,\cdots,k_n \in H^\infty$ and $|k_1|^2+\cdots+|k_n|^2=1$, then $\Theta:=(k_1,\cdots,k_n)^t$ is an $(n\times 1)$ matrix inner function.
\begin{theorem}\label{thm rel bet n by 1 inner and symbol}
For an $n \times 1$ matrix inner function $\Theta=(k_1,
\cdots, k_n)^t$, there exists a $(1\times n)$ matrix function $\Phi$ such that $\Theta H^2 = \ker H_\Phi$.
\end{theorem}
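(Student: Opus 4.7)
My plan is to construct $\Psi$ explicitly by perturbing the obvious candidate $\Theta^{*}=(\overline{k_1},\dots,\overline{k_n})$. That candidate satisfies $\Theta^{*}\Theta = 1$ but its entries are of bounded type, so its independency modulo $\mathcal N$ is $0$ rather than the $n-1$ required by Theorem~\ref{ker of block Hankel}. The fix is to add to $\Theta^{*}$ a row of terms lying in the pointwise kernel of right-multiplication by $\Theta$ (so that the identity $\Psi\Theta=1$ is preserved), using coefficients drawn from a family of scalar functions that are independent modulo $\mathcal N$, whose existence is precisely the point of the next section. The identity $\Psi\Theta=1$ then does double duty: it immediately gives $\Theta H^2\subseteq\ker H_\Psi$, and it also provides a left inverse that, applied to the kernel equation, recovers the required scalar factor as an $H^2$-function.

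Concretely, by the deferred result, fix $a_2,\dots,a_n\in L^\infty$ that are independent modulo $\mathcal N$. After permuting coordinates of $\Theta$, I may assume $k_1\not\equiv 0$, so that $k_1\ne 0$ a.e.\ on $\mathbb T$ by the F.\ and M.\ Riesz theorem. Set
\[
  \varphi_1:=\overline{k_1}+\sum_{i=2}^n a_i k_i,\qquad \varphi_j:=\overline{k_j}-a_j k_1 \quad (2\le j\le n),
\]
and put $\Psi:=(\varphi_1,\dots,\varphi_n)\in L^\infty_{M_{1\times n}}$. A direct expansion yields $\Psi\Theta=\sum_i|k_i|^2=1$, so for every $f\in H^2$, $\Psi(\Theta f)=(\Psi\Theta)f=f\in H^2$; hence $\Theta H^2\subseteq\ker H_\Psi$.

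For the reverse inclusion, let $F=(f_1,\dots,f_n)^{t}\in\ker H_\Psi$ and rearrange
\[
  \Psi F = \sum_{i=1}^n \overline{k_i}\,f_i \;+\; \sum_{j=2}^n a_j b_j,\qquad b_j:=k_j f_1 - k_1 f_j\in H^2.
\]
The first sum is of bounded type (all factors lie in $\mathcal N$), and $\Psi F\in H^2\subset\mathcal N$, so by the vector-space property of $\mathcal N$, $\sum_{j=2}^n a_j b_j\in\mathcal N$. Independence of $\{a_2,\dots,a_n\}$ modulo $\mathcal N$ forces $b_j=0$ for every $j\ge 2$, i.e., $k_1 f_j=k_j f_1$. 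Since $k_1\ne 0$ a.e., define $g:=f_1/k_1$; then $g k_i=f_i$ a.e.\ for every $i$, so $F=g\Theta$. The pointwise isometry $|g\Theta|^2=|g|^2\sum_i|k_i|^2=|g|^2$ yields $g\in L^2$. Finally, $g=g\cdot(\Psi\Theta)=\Psi(g\Theta)=\Psi F\in H^2$, so $F\in\Theta H^2$, completing the proof.

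The single nontrivial input is the existence of independent scalar $L^\infty$-functions modulo $\mathcal N$ of arbitrarily high finite order; without such $a_2,\dots,a_n$ the entries of $\Psi$ would remain bounded type, the independency would collapse, and the kernel of $H_\Psi$ would have multiplicity larger than $1$, destroying the argument. The remaining technicalities (permuting so that $k_1\not\equiv 0$, invoking F.\ and M.\ Riesz to divide safely by $k_1$, and using $|\Theta|=1$ to get $g\in L^2$) are routine.
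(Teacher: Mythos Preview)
Your overall architecture matches the paper's: choose $n-1$ scalar functions $a_j\in L^\infty$ independent modulo $\mathcal N$, build a $1\times n$ row $\Psi$ with $\Psi\Theta=1$, and then use independence to force the ``cross terms'' $b_j=k_jf_1-k_1f_j$ to vanish. The forward inclusion $\Theta H^2\subseteq\ker H_\Psi$ and the endgame (recovering $g=f_1/k_1\in H^2$ via $g=\Psi F$) are fine.

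The gap is in the sentence ``the first sum is of bounded type (all factors lie in $\mathcal N$).'' You are asserting that $\overline{k_i}\in\mathcal N$ for $k_i\in H^\infty$, and this is false in general. By Proposition~\ref{cyclic vector and independency} (with $n=1$), $\overline{k}\notin\mathcal N$ exactly when $k$ is a cyclic vector for $S^*$, and such vectors exist even in $H^\infty$ (e.g.\ any bounded lacunary series $\sum 2^{-k}z^{2^k}$, which has $\mathbb T$ as a natural boundary and hence admits no pseudocontinuation). Given any such $k$, one can normalize and complete it to a $2\times 1$ inner $\Theta=(k_1,k_2)^t$ with $\overline{k_1}\notin\mathcal N$; then $\sum_i\overline{k_i}f_i$ need not lie in $\mathcal N$, and you cannot conclude $\sum_{j\ge 2}a_jb_j\in\mathcal N$. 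Without that, independence of the $a_j$ modulo $\mathcal N$ gives you nothing.

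The paper's construction sidesteps exactly this issue by taking the ``base'' left inverse to be $(0,\dots,0,1/k_n)$ rather than $\Theta^*$: one still has $\Phi\Theta=1$, but now the residual term in $\Phi F$ is $f_n/k_n$, a quotient of $H^2$ by $H^\infty$ and hence genuinely in $\mathcal N$. The price is that $\Phi$ need not be bounded, whereas your $\Psi$ lies in $L^\infty_{M_{1\times n}}$; but that boundedness is precisely what forced you to use $\overline{k_i}$ and created the gap. If you replace your perturbation base $\Theta^*$ by $(1/k_1,0,\dots,0)$ and keep the same cross terms $a_j(k_je_1^t-k_1e_j^t)$, your argument goes through and becomes essentially the paper's proof.
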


\begin{proof}
Since $k_i$ are analytic functions and $\sum_{i=1}^n |k_i|^2=1$, at least one $k_i$ is a nonzero function.
Without loss of generality, we may assume $k_n\ne0$. ($k_n(\zeta)\ne 0$ a.e. on $\mathbb T$ since $k_n \in H^\infty$)
Let $\{ a_1,\cdots, a_{n-1}\} \subset L^\infty$ be an independent set modulo Nevanlinna class and define a $1\times n$ matrix function $$\Phi=[a_1,\cdots ,a_{n-1}, \frac{1}{k_n}(1-\sum_{i=1}^{n-1}  a_ik_i)].$$ Then
$$
\begin{array}{ll}
  (f_1, \cdots, f_n)^t \in \ker H_\Phi & \Longleftrightarrow \sum_{i=1}^{n-1} a_i f_i +\frac{1-\sum_{i=1}^{n-1}  a_ik_i}{k_n}f_n \in H^2\\
   & \Longleftrightarrow \sum_{i=1}^{n-1}a_i(f_i - \frac{k_i}{k_n}f_n) +\frac{1}{k_n}f_n \in H^2.
\end{array}
$$
Independence of $\{ a_1, \cdots, a_{n-1}\}$ implies $f_i - \frac{k_i}{k_n}f_n=0$ and $\frac{1}{k_n}f_n = h$ for some $h\in H^2$. Therefore $f_i= k_i h$ for each $i=1, \cdots, n$, that is, $(f_1, \cdots, f_n)^t \in \Theta H^2$.
\end{proof}
\bigskip
\\
Theorem \ref{thm rel bet n by 1 inner and symbol} naturally leads us to the following question :

\medskip
For an arbitrary $n\times r$ inner function $\Theta$, does there exist a $1 \times n$ matrix function $\Phi$ such that $\Theta H^2_{\mathbb C^r}= \ker H_\Phi$ for some?

\medskip
The answer to the above question is negative. For a $2\times 2$ matrix inner function $I_z=\left[
                                                \begin{array}{cc}
                                                  z & 0 \\
                                                  0 & z \\
                                                \end{array}
                                              \right]
$, it can be shown that $I_z H^2_{\mathbb C^2}$ cannot be represented as the kernel of a Hankel operator $H_\Phi$ for a $1\times 2$ matrix function $\Phi$.
To derive a contradiction, assume $\ker H_{\Phi} = I_z H^2_{\mathbb C^2}$ for $\Phi =[a_1, a_2]$, where $a_1, a_2 \in L^2$ are scalar-valued $L^2$ functions. Since $(z,0)^t,(0,z)^t \in  I_z H^2_{\mathbb C^2}=\ker H_\Phi$, we have $$
a_1 = \alpha_1 \overline z + h_1 \hbox{ and } a_2 = \alpha_2 \overline z + h_2,
$$ for some complex numbers $\alpha_i\in \mathbb C$ and analytic functions $h_i \in H^2$($i=1,2$). Since $\ker H_\Phi$ does not contain $(1,0)^t$ and $(0,1)^t$, we find $\alpha_1\ne 0$ for $i=1,2.$ It is easy to verify
$$\ker H_{[{\alpha_1 \overline z} +  h_1,{\alpha_2 \overline z} +  h_2]}= \{(f, g)^t \in H^2_{\mathbb C^2}: {\alpha_1}f(0)+{\alpha_2}g(0)=0 \}.$$
This set obviously contains, but, is not equal to $I_z H^2_{\mathbb C^2}$.

\section{Application to invariant subspaces for $S$ and $S^*$}

As an application of the main result, we will discuss the shape of invariant subspaces of $H^2_{\mathbb C^n}$ for $S$ and $S^*$ generated by finite elements.
For
$A=\{f_i
\in H^2_{\mathbb C ^n}| i=1,2, \cdots,s \}$, define $E_A$ and $E_A^*$ by
 $$E_A :=\bigvee_{k\in \mathbb{N}\cup\{0\}, f_i \in A} {S}^{k} f_i \hbox{ and } E^*_A :=\bigvee_{k\in \mathbb{N}\cup\{0\}, f_i \in A} {S^*}^{k} f_i$$
Clearly, $E_A$ and $E_A^*$ are invariant subspaces of $H^2_{\mathbb C^n}$ for the shift operator $S$ and the backward shift $S^*$, respectively.
Therefore, by Beurling-Lax-Halmos Theorem, there exist natural numbers $m,m^\prime (\le  n)$ and matrix inner functions $\Theta \in H^\infty_{M_{n\times m}}, \Theta^\prime \in H^\infty_{M_{n\times m^\prime}}$ such that $E_A=\Theta H^2_{\mathbb C^m}$ and $E^*_A = H^2_{\mathbb C^n}\ominus \Theta^\prime H^2_{\mathbb C^{m^\prime}}$.
The set $A$ will be called a generating set of $E_A$ for $S$, or, a generating set of $E^*_A$ for $S^*$. Our concern is how the numbers $m$ and $m^\prime$ is determined, or, in other words, how the sizes of inner matrices $\Theta$ and $\Theta^\prime$ are decided.
To figure out how the sizes of the matrix inner functions $\Theta$ and $\Theta^\prime$ are determined, the following lemma is useful.
\begin{lemma}\label{backshift_invsub_genby_finvec0}
  Let $A=\{ f_1, f_2, \cdots, f_s\} \subset H^2_{\mathbb C^n}$ and define an $n\times s$ matrix  $F:=[f_1,f_2 , \cdots ,f_s]$. Then
$E^*_A = H^2_{\mathbb C^n} \ominus \ker H_{\overline z F^*}$.
\end{lemma}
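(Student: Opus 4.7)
The plan is to identify $\ker H_{\overline z F^*}$ with the orthogonal complement $(E^*_A)^\perp$ and then take orthogonal complements. Since $E^*_A$ is by definition the closed linear span of $\{S^{*k}f_i : k \geq 0,\ i = 1, \ldots, s\}$, a vector $g \in H^2_{\mathbb C^n}$ lies in $(E^*_A)^\perp$ if and only if $\langle g, S^{*k} f_i\rangle = 0$ for every $k \geq 0$ and every $i$. Moving $S^k$ across the inner product and expanding as a scalar integral gives
\begin{equation*}
\langle g, S^{*k} f_i\rangle = \langle S^k g, f_i\rangle = \int_{\mathbb T} z^k (f_i^* g)\, dm = \widehat{f_i^* g}(-k),
\end{equation*}
where $f_i^* g \in L^1$ denotes the pointwise scalar product. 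Vanishing of $\widehat{f_i^* g}(-k)$ for every $k \geq 0$ is equivalent to $f_i^* g \in z H^1$, i.e., $\overline z f_i^* g \in H^1$. Collecting the $s$ conditions into a single column vector, I get the clean description
\begin{equation*}
(E^*_A)^\perp = \{g \in H^2_{\mathbb C^n} : \overline z F^* g \in H^1_{\mathbb C^s}\}.
\end{equation*}

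Next I would compute $\ker H_{\overline z F^*}$ directly from the definition $H_\Phi g = J(I-P)(\Phi g)$. Since $J$ is injective on $\overline{z H^2_{\mathbb C^s}}$, the equation $H_{\overline z F^*} g = 0$ reduces to $(I-P)(\overline z F^* g) = 0$, which says exactly that the anti-analytic part of $\overline z F^* g$ vanishes, i.e., $\overline z F^* g \in H^1_{\mathbb C^s}$. The domain condition $(I-P)(\overline z F^* g) \in \overline{z H^2_{\mathbb C^s}}$ is automatic once the anti-analytic part is zero. This gives the same set as in the previous paragraph, so $\ker H_{\overline z F^*} = (E^*_A)^\perp$, and the conclusion follows upon taking orthogonal complements.

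The only real care is needed in the definition of the (possibly unbounded) Hankel operator $H_{\overline z F^*}$, since $F \in H^2_{M_{n\times s}}$ need not lie in $H^\infty$ and so $\overline z F^*$ is only in $L^2$. However, this is not a genuine obstacle: both the kernel and the orthogonal complement reduce to the single membership condition $\overline z F^* g \in H^1_{\mathbb C^s}$, so the domain issue is settled automatically. The substantive (but still routine) step is the Fourier-side identification of $\langle g, S^{*k} f_i\rangle$ with the negative-frequency Fourier coefficients of the scalar $L^1$ function $f_i^* g$, which is what lets me collapse a countable family of orthogonality conditions into a single analyticity condition on $\overline z F^* g$.
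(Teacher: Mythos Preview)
Your proof is correct and follows essentially the same approach as the paper: both identify $(E^*_A)^\perp$ by computing $\langle g, S^{*k}f_i\rangle = \langle S^k g, f_i\rangle = \int_{\mathbb T} z^k (f_i^* g)\,dm$, interpret the vanishing for all $k\ge 0$ as the condition $f_i^* g \in zH^1$ (equivalently $\overline z F^* g \in H^1_{\mathbb C^s}$), and identify this set with $\ker H_{\overline z F^*}$. Your explicit remark about the domain of the possibly unbounded Hankel operator is a welcome clarification that the paper leaves implicit.
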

\begin{proof}
  Write $f_i=\left(
         \begin{array}{c}
           f_{i1} \\
           \vdots \\
           f_{in} \\
         \end{array}
       \right)$
and suppose $h= \left(
         \begin{array}{c}
           h_{1} \\
           \vdots \\
           h_{n} \\
         \end{array}
       \right) \in  H^2_{\mathbb C^n}\ominus E^*_A$.
Then for each $k \in \mathbb{N}\cup \{0\}$, we have
\begin{equation}\label{back shift inv subsp}
\begin{array}{ll}
  0 & =\langle h, S^{*k} f_i \rangle \\
     &  =\langle S^k h, f_i \rangle\\
     & =\langle  \left(
         \begin{array}{c}
           z^k h_{1} \\
           \vdots \\
           z^k h_{n} \\
         \end{array}
       \right) , \left(
         \begin{array}{c}
           f_{i1} \\
           \vdots \\
           f_{in} \\
         \end{array}
       \right) \rangle \\
     & = \int_{\mathbb T} z^k(h_1 \overline{f_{i1}}+\cdots+ h_n \overline{f_{in}})dm.
\end{array}
\end{equation}

Since (\ref{back shift inv subsp}) implies that the 0-th and negative Fourier coefficients of $(h_1 \overline{f_{i1}}+\cdots+ h_n \overline{f_{in}})$ are all zero,
 $$(h_1 \overline{f_{i1}}+\cdots+ h_n \overline{f_{in}}) \in zH^1, \hbox{ or, equivalently, }  (h_1 \overline{zf_{i1}}+\cdots+ h_n \overline{zf_{in}}) \in H^1.$$
Since $f_i$ is an arbitrary element in $A$, we conclude that
$h \in \ker H_{\overline z F^*}$.
Therefore, $$H^2_{\mathbb C^n}\ominus E^*_A \subset \ker H_{\overline z F^*}.$$
It is easy to verify that the above argument can be reversed, i.e., $h\in \ker H_{\overline z F^*}$ implies $h \perp S^{*k}f_i$ for each $k\in \mathbb N \cup \{0\}$ and $i= 1,\cdots, s$. Thus, we have $$H^2_{\mathbb C^n}\ominus E^*_A \supset \ker H_{\overline z F^*},$$ hence, $H^2_{\mathbb C^n}\ominus E^*_A = \ker H_{\overline z F^*},$ or, equivalently,
$ E^*_A= H^2_{\mathbb C^n}\ominus \ker H_{\overline z F^*}$ as desired.
\end{proof}

\bigskip

The following theorem is immediate from Lemma \ref{backshift_invsub_genby_finvec0} and Theorem \ref{ker of block Hankel}.

\begin{theorem}\label{backshift_invsub_genby_finvec1}
For a finite subset $A=\{ f_1, f_2, \cdots, f_s\}$ of $ H^2_{\mathbb C^n}$, define an $n\times s$ matrix  $F:=[f_1 ,f_2 , \cdots ,f_s]$ and let $\overline g_i \in \overline{H^2_{\mathbb C^s}}$ be the $i$-th column of $F^*$. Set $\alpha := \hbox{ind}_{\mathcal N}\{\overline g_1,\cdots, \overline g_n\}$, then
$$E^*_A = H^2_{\mathbb C^n}\ominus \Theta H^2_{\mathbb C^{n-\alpha}}$$ for some $n \times(n- \alpha)$ matrix inner function $\Theta$.
\end{theorem}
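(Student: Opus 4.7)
The plan is to deduce this theorem directly by combining Lemma \ref{backshift_invsub_genby_finvec0}, which identifies $E^*_A$ as the orthogonal complement of a specific block Hankel kernel, with Theorem \ref{ker of block Hankel}, which describes the shape of such a kernel via the independency modulo Nevanlinna class of the columns of the symbol.

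First I would invoke Lemma \ref{backshift_invsub_genby_finvec0} to write $E^*_A = H^2_{\mathbb{C}^n}\ominus \ker H_{\overline z F^*}$. The symbol $\overline z F^*$ is an $s\times n$ matrix function whose $i$-th column is $\overline z\,\overline g_i \in L^2_{\mathbb{C}^s}$. Since $\overline z$ is a nonzero element of $\mathcal{N}^\infty$, the observation (\ref{independency multiplied by diagonal matrix}) gives
$$\mathrm{ind}_{\mathcal{N}}\{\overline z\,\overline g_1,\ldots,\overline z\,\overline g_n\} = \mathrm{ind}_{\mathcal{N}}\{\overline g_1,\ldots,\overline g_n\} = \alpha.$$

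Next I would apply Theorem \ref{ker of block Hankel} to the $s\times n$ symbol $\overline z F^*$, with the role of ``$m$'' in that theorem played by $n$ and that of ``$r$'' played by $\alpha$. This yields $\ker H_{\overline z F^*} = \Theta H^2_{\mathbb{C}^{n-\alpha}}$ for some $n\times(n-\alpha)$ matrix inner function $\Theta$ (in the degenerate case $\alpha=n$ the kernel is trivial, matching the convention $\Theta H^2_{\mathbb{C}^0}=(0)$). Substituting into the expression from Lemma \ref{backshift_invsub_genby_finvec0} produces the claimed representation $E^*_A = H^2_{\mathbb{C}^n}\ominus \Theta H^2_{\mathbb{C}^{n-\alpha}}$.

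There is no genuine technical obstacle here, since all of the nontrivial work has already been done in Lemma \ref{backshift_invsub_genby_finvec0} and Theorem \ref{ker of block Hankel}; the only points requiring a little care are keeping the two dimensions $n$ and $s$ straight (the Hankel operator $H_{\overline z F^*}$ is defined on $H^2_{\mathbb{C}^n}$, not on $H^2_{\mathbb{C}^s}$), the slight shift by the scalar $\overline z$ which is harmless by (\ref{independency multiplied by diagonal matrix}), and the boundary case $\alpha = n$ where the inner function factor disappears.
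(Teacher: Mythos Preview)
Your proposal is correct and matches the paper's own approach exactly: the paper simply states that the theorem is immediate from Lemma \ref{backshift_invsub_genby_finvec0} and Theorem \ref{ker of block Hankel}, and your write-up spells out precisely this deduction, including the harmless passage from the columns $\overline z\,\overline g_i$ to $\overline g_i$ via (\ref{independency multiplied by diagonal matrix}).
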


\bigskip

 For an $n\times n$ matrix-valued inner function $\Theta$, let $f_i$ be the $i$-th column of $\Theta$. Then it is easy to verify that $\{S^*f_i\,\,|\,\,i=1,\cdots, n\}$ is a generating set of $H^2\ominus \Theta H^2_{\mathbb C^n}$ for $S^*$. (See \cite{Ni}, page 41) But, this observation may fail if $\Theta$ is a nonsquare matrix inner function. \begin{example}
   Consider a $2\times 1$ matrix inner function $\Theta= [\frac{1}{\sqrt{2}} z, \frac{1}{\sqrt{2}} z]^t$, then the invariant subspace for $S^*$ generated by the single element $S^*(\frac{1}{\sqrt{2}} z,\frac{1}{\sqrt{2}} z)^t=(\frac{1}{\sqrt 2},\frac{1}{\sqrt 2})^t$ is $H^2_{\mathbb C^2}\ominus \ker H_{[\overline z ,\overline z]}$ by Lemma \ref{backshift_invsub_genby_finvec0}. But, recalling the shape of $\ker H_{[\theta_1, \theta_2]}$ in Section 4( see (\ref{theta2})), we find
  $$\ker H_{[\overline z ,\overline z]}=
 \left[
   \begin{array}{cc}
     \frac{1}{2}z-\frac{1}{2} & \frac{1}{2}z+\frac{1}{2} \\
     \frac{1}{2}z+\frac{1}{2} & \frac{1}{2}z-\frac{1}{2}
   \end{array}
 \right]
 H^2_{\mathbb C^2}
 $$
 Therefore,
 $$E^*_{S^*(\frac{1}{\sqrt{2}} z,\frac{1}{\sqrt{2}} z)^t}= H^2_{\mathbb C^2} \ominus  \left[
   \begin{array}{cc}
     \frac{1}{2}z-\frac{1}{2} & \frac{1}{2}z+\frac{1}{2} \\
     \frac{1}{2}z+\frac{1}{2} & \frac{1}{2}z-\frac{1}{2}
   \end{array}
 \right]
 H^2_{\mathbb C^2} \ne  H^2_{\mathbb C^2}\ominus [\frac{1}{\sqrt{2}} z,\frac{1}{\sqrt{2}} z]^t H^2.$$
 \end{example}

 \medskip
 Generally, it is known that any invariant subspace of $H^2_{\mathbb C^n}$ for the backward shift $S^*$ has a generating set of order less than or equal to $n$.
In fact, more is known.

\begin{theorem}\label{gen_set_thm_for_backshift}(See \cite {Ni}, page 41)
Let $\Theta$ be a matrix inner function. If $\Theta \in H^\infty_{M_{m\times m}}$, then the invariant subspace $H^2_{\mathbb C^m}\ominus \Theta H^2_{\mathbb C^m}$ for the backward shift $S^*$ has a generating set of order $m$. If $\Theta \in H^\infty_{M_{n\times m}}$ with $m <n$, then $H^2_{\mathbb C^n}\ominus \Theta H^2_{\mathbb C^m}$ has a generating set of order $m+1$.
\end{theorem}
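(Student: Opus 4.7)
My plan divides the argument into the square and nonsquare cases. For the \textbf{square case} ($n=m$), I would take $A = \{S^*\Theta e_j : j=1,\dots,m\}$, where $e_j$ is the $j$-th standard basis vector of $\mathbb C^m$. Set $F := [S^*\Theta e_1,\dots,S^*\Theta e_m] = S^*\Theta$. A short boundary computation gives $F(\zeta) = \overline\zeta(\Theta(\zeta)-\Theta(0))$ on $\mathbb T$, hence $\overline\zeta F(\zeta)^* = \Theta(\zeta)^*-\Theta(0)^*$. The symbol $\Theta(0)^*$ is a constant (hence analytic) matrix, so $H_{\Theta(0)^*}=0$ and $\ker H_{\overline z F^*}=\ker H_{\Theta^*}$. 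Corollary \ref{kerofHankel with square inner adjoint} identifies this kernel with $\Theta H^2_{\mathbb C^m}$, and Lemma \ref{backshift_invsub_genby_finvec0} then yields $E^*_A = H^2_{\mathbb C^m}\ominus\Theta H^2_{\mathbb C^m}$, giving an $m$-element generating set.

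For the \textbf{nonsquare case} ($m<n$), I would augment the above by one carefully chosen extra vector $v\in K_\Theta := H^2_{\mathbb C^n}\ominus\Theta H^2_{\mathbb C^m}$, taking $A=\{S^*\Theta e_j\}_{j=1}^m\cup\{v\}$. With $F=[S^*\Theta,v]$ of size $n\times(m+1)$, Lemma \ref{backshift_invsub_genby_finvec0} reduces the problem to proving $\ker H_{\overline z F^*} = \Theta H^2_{\mathbb C^m}$, where
\[
\overline z F^* = \begin{pmatrix} \Theta^* - \Theta(0)^* \\ \overline z v^* \end{pmatrix}.
\]
The first $m$ rows give the constraint $g \in \ker H_{\Theta^*}$, which decomposes as $\Theta H^2_{\mathbb C^m}\oplus N$, where $N := \{h\in H^2_{\mathbb C^n} : \Theta^*h = 0\}$; the last row gives the extra condition $\overline z v^*g \in H^1$. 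Since $v \in K_\Theta$ forces $v^*\Theta \in zH^2_{\mathbb C^m}$, the $\Theta H^2_{\mathbb C^m}$-part of $g$ automatically satisfies the last condition, and the task reduces to choosing $v$ so that $\overline z v^*h \in H^1$ and $h \in N$ together force $h = 0$.

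To execute this, I would invoke the standard matrix-inner extension theorem to write $\Theta$ as the first block of a square inner $\tilde\Theta = [\Theta, \Theta_1] \in H^\infty_{M_{n\times n}}$, producing an inner complement $\Theta_1 \in H^\infty_{M_{n\times(n-m)}}$ with $N = \Theta_1 H^2_{\mathbb C^{n-m}}$, and then take $v = \Theta_1 w$ for a well-chosen $w \in H^2_{\mathbb C^{n-m}}$. Writing $h = \Theta_1 k$ and using $\Theta_1^*\Theta_1 = I_{n-m}$, the implication $\overline z v^*h \in H^1 \Rightarrow h = 0$ reduces to $\overline z w^*k \in H^1 \Rightarrow k = 0$, equivalent via Proposition \ref{equivalent cond of ind mod N} to the scalar functions $\{\overline{w_1},\dots,\overline{w_{n-m}}\}$ being independent modulo Nevanlinna class. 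The \textbf{main obstacle} is thus producing $w \in H^2_{\mathbb C^{n-m}}$ whose $n-m$ component conjugates form an independent set mod $\mathcal N$; this relies on the existence of arbitrarily large independent families of scalar functions modulo $\mathcal N$, realized as conjugates of suitable non--bounded-type $H^2$ functions (for instance non-rational outer functions such as $w_i(z) = \log(1 - \alpha_i z)$ for distinct $\alpha_i$), to be verified via the preservation results of Section~3. Once such $w$ is secured, $v = \Theta_1 w$ yields the $(m+1)$-st generator, completing the nonsquare case.
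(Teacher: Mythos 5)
First, a point of reference: the paper does not prove this theorem at all — it is quoted from Nikolskii \cite{Ni}, page 41 — so there is no internal proof to compare against, and your attempt has to be judged on its own. Your square case is correct and is exactly the observation the paper records just before the theorem: with $F=S^*\Theta$ one gets $\overline z F^*=\Theta^*-\Theta(0)^*$ on $\mathbb T$, the constant analytic summand $\Theta(0)^*$ does not affect the Hankel kernel, Corollary \ref{kerofHankel with square inner adjoint} gives $\ker H_{\Theta^*}=\Theta H^2_{\mathbb C^m}$, and Lemma \ref{backshift_invsub_genby_finvec0} finishes. The overall architecture of your nonsquare case (decompose $\ker H_{\Theta^*}=\Theta H^2_{\mathbb C^m}\oplus N$ with $N=\{h:\Theta^*h=0\}$, note that the extra row condition is automatic on the $\Theta H^2_{\mathbb C^m}$ part since $v\in N$ gives $v^*\Theta=0$, and kill $N$ with one extra generator) is also sound.

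There is, however, one genuine gap: the ``standard matrix-inner extension theorem'' you invoke does not exist. An $n\times m$ inner function need not be the first block of a square inner function; this is the analytic Darlington synthesis problem, solvable only when the entries of $\Theta$ admit bounded-type pseudocontinuations. Concretely, take $a\in H^\infty$ with $\overline a$ not of bounded type (such $a$ exist, e.g.\ suitable lacunary series; equivalently, by Proposition \ref{cyclic vector and independency} with $n=1$, bounded $S^*$-cyclic vectors), normalize so $\|a\|_\infty\le \tfrac12$, and let $b$ be outer with $|b|^2=1-|a|^2$; then $\Theta=(a,b)^t$ is a $2\times 1$ inner function, and any analytic unitary completion $[\Theta,\Theta_1]$ would force $d=\lambda\overline a\in H^\infty$ with $\lambda=\det[\Theta,\Theta_1]$ inner, i.e.\ $\overline a=d/\lambda$ of bounded type, a contradiction. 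The repair is cheap: $N$ is a closed shift-invariant subspace of $H^2_{\mathbb C^n}$, so by Beurling--Lax--Halmos either $N=(0)$ (and the $m$ generators already suffice; pad arbitrarily to order $m+1$) or $N=\Theta_1H^2_{\mathbb C^k}$ for some inner $\Theta_1\in H^\infty_{M_{n\times k}}$, where $\Theta^*\Theta_1=0$ a.e.\ forces $k\le n-m$; your computation $v^*h=w^*\Theta_1^*\Theta_1k=w^*k$ then goes through verbatim with $v=\Theta_1w$, $w\in H^2_{\mathbb C^k}$. A second, smaller error: the candidates $w_i(z)=\log(1-\alpha_iz)$ cannot work, since they extend analytically across $\mathbb T$, so each $\overline{w_i}$ is of bounded type and $\{\overline{w_i}\}$ is not even singly independent mod $\mathcal N$. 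The correct source of $w$ is precisely the cyclic-vector statement (Proposition \ref{cyclic vector and independency} plus the density of $S^*$-cyclic vectors in $H^2_{\mathbb C^k}$), which the paper derives from Lemma \ref{backshift_invsub_genby_finvec0} and Theorem \ref{ker of block Hankel} without appealing to the present theorem, so using it here is not circular. With these two repairs your argument becomes a complete, self-contained proof.
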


\begin{corollary}\label{gen set cor}
  For any matrix inner function $\Theta \in H^\infty_{M_{n\times m}}$, there exists a matrix function $\Phi \in L^2_{r\times n}$ such that $\Theta H^2_{\mathbb C^m}= \ker H_\Phi$ and $r\le n$.
\end{corollary}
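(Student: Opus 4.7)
The plan is to exhibit $\Phi$ explicitly by dualizing through the backward-shift picture supplied by Lemma \ref{backshift_invsub_genby_finvec0}, and controlling the number of rows by appealing to Theorem \ref{gen_set_thm_for_backshift}.

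First I would observe that $\Theta H^2_{\mathbb C^m}$ is a shift-invariant subspace of $H^2_{\mathbb C^n}$, so its orthogonal complement $M:=H^2_{\mathbb C^n}\ominus \Theta H^2_{\mathbb C^m}$ is invariant for $S^*$. Theorem \ref{gen_set_thm_for_backshift} then guarantees a generating set $A=\{f_1,\dots,f_r\}\subset M$ for $S^*$ with $r=m$ when $m=n$ and $r=m+1$ when $m<n$; in either case $r\le n$, since $n\ge m$ is forced by $\Theta$ being inner.

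Next I would assemble the $n\times r$ matrix $F:=[f_1\ \cdots\ f_r]$ whose columns generate $M=E^*_A$. By Lemma \ref{backshift_invsub_genby_finvec0},
\begin{equation*}
E^*_A \;=\; H^2_{\mathbb C^n}\ominus \ker H_{\overline z F^*}.
\end{equation*}
Taking orthogonal complements inside $H^2_{\mathbb C^n}$ gives
\begin{equation*}
\Theta H^2_{\mathbb C^m} \;=\; H^2_{\mathbb C^n}\ominus E^*_A \;=\; \ker H_{\overline z F^*}.
\end{equation*}
Setting $\Phi := \overline z F^*\in L^2_{M_{r\times n}}$, which has $r\le n$ rows, yields exactly the desired representation $\Theta H^2_{\mathbb C^m}=\ker H_\Phi$.

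There is no serious obstacle: the only point requiring attention is verifying that the generating set furnished by Theorem \ref{gen_set_thm_for_backshift} can always be taken of order at most $n$, but this is immediate from the two cases $m=n$ and $m<n$ of that theorem together with the inequality $m\le n$ built into the definition of an $n\times m$ inner function. Everything else is a direct transcription via Lemma \ref{backshift_invsub_genby_finvec0}.
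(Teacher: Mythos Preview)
Your proof is correct and follows essentially the same route as the paper: invoke Theorem \ref{gen_set_thm_for_backshift} to obtain a generating set $A=\{f_1,\dots,f_r\}$ with $r\le n$ for the $S^*$-invariant subspace $H^2_{\mathbb C^n}\ominus\Theta H^2_{\mathbb C^m}$, then apply Lemma \ref{backshift_invsub_genby_finvec0} to conclude $\ker H_{\overline z F^*}=\Theta H^2_{\mathbb C^m}$. The paper's $\Phi=[\overline{zg_1},\dots,\overline{zg_r}]^t$ is precisely your $\overline z F^*$.
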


\begin{proof}
Using Theorem \ref{gen_set_thm_for_backshift}, there exists a finite set $A=\{g_1,\cdots,g_r\}\subset H^2_{\mathbb C^n}$ ($r\le n$) such that $E^*_A= H^2_{\mathbb C^n}\ominus \Theta H^2_{\mathbb C^m}$. Let $\Phi= [\overline {zg_1},\cdots, \overline {zg_r}]^t \in L^2_{M_{r\times n}}$, then by Lemma \ref{backshift_invsub_genby_finvec0}, $\ker H_{\Phi}= \Theta H^2_{\mathbb C^m}$.
\end{proof}

\bigskip
For a Hilbert space $\mathcal H$ and a bounded linear operator $T$ on $\mathcal H$, an element $f\in \mathcal{H}$ is said to be a \textit{cyclic vector} for $T$ if
$$\bigvee_{k=0,1,\cdots} T^{k}f=\mathcal H.$$
It is known that the set of cyclic vectors for $S^*$ in $H^2_{\mathbb C^n}$ is nonempty, even further, it is a dense subset of $H^2_{\mathbb C^n}$(\cite{Ni}, page 41).
The following proposition shows the existence of a set of scalar-valued functions of arbitrary order that is independent modulo Nevanlinna class.

\begin{proposition}\label{cyclic vector and independency}
$(f_1,\cdots, f_n)^t \in H^2_{\mathbb C^n}$ is a cyclic vector for the backward shift $S^*$ (i.e., $\bigvee_{k=0,1,\cdots} S^{*k} (f_1,\cdots, f_n)^t = H^2_{\mathbb C^n}$) if and only if the set $\{\overline f_1, \cdots, \overline f_n\}\subset L^2$ is independent modulo Nevanlinna class.
\end{proposition}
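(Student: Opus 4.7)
The plan is to reduce the claim to a direct application of Lemma \ref{backshift_invsub_genby_finvec0} combined with Proposition \ref{equivalent cond of ind mod N}, observing that the twist by $\overline z$ is harmless for independency mod $\mathcal N$.

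First I would apply Lemma \ref{backshift_invsub_genby_finvec0} to the singleton set $A=\{(f_1,\dots,f_n)^t\}$. With $F:=(f_1,\dots,f_n)^t$, the lemma gives $E^*_A = H^2_{\mathbb C^n} \ominus \ker H_{\overline z F^*}$, where $\overline z F^* = [\overline{zf_1},\dots,\overline{zf_n}]$ is a $1\times n$ matrix function. Cyclicity of $(f_1,\dots,f_n)^t$ for $S^*$ is precisely the statement $E^*_A = H^2_{\mathbb C^n}$, which in turn is equivalent to $\ker H_{\overline z F^*} = (0)$.

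Next I would invoke Proposition \ref{equivalent cond of ind mod N}, applied to the columns of the $1\times n$ symbol $\overline z F^*$, which are the scalar-valued functions $\overline{zf_1},\dots,\overline{zf_n} \in L^2$. Proposition \ref{equivalent cond of ind mod N} tells us that $\ker H_{\overline z F^*} = (0)$ if and only if $\{\overline{zf_1},\dots,\overline{zf_n}\}$ is independent modulo Nevanlinna class.

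Finally, to pass from $\{\overline{zf_1},\dots,\overline{zf_n}\}$ to $\{\overline{f_1},\dots,\overline{f_n}\}$, I would use the equivalence (\ref{independency multiplied by diagonal matrix}): multiplying each $\varphi_i := \overline{zf_i}$ by the nonzero bounded type function $z\in \mathcal N^\infty$ yields $z\varphi_i = |z|^2\overline{f_i} = \overline{f_i}$ on $\mathbb T$, and since this preserves independence mod $\mathcal N$ in both directions, the two independency conditions are equivalent. Chaining the three equivalences gives the proposition. There is really no significant obstacle here; the only thing to be careful about is the direction of the twist by $\overline z$ and the matching of column indices between $F^*$ and the desired family $\{\overline{f_i}\}$, but both are routine once the reduction via Lemma \ref{backshift_invsub_genby_finvec0} is set up.
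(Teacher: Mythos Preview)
Your proposal is correct and follows essentially the same route as the paper: apply Lemma~\ref{backshift_invsub_genby_finvec0} to the singleton $A$, translate cyclicity into $\ker H_{[\overline{zf_1},\dots,\overline{zf_n}]}=(0)$, and then remove the $\overline z$ twist via (\ref{independency multiplied by diagonal matrix}). The only cosmetic difference is that the paper cites Theorem~\ref{ker of block Hankel} at the middle step while you cite Proposition~\ref{equivalent cond of ind mod N} directly, but since the $r=m$ case of the theorem is exactly that proposition, the arguments coincide.
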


\begin{proof}
  Let $f=(f_1,\cdots, f_n )^t\in H^\infty_{\mathbb C^n}$ and $E^*_f:=\bigvee_{k=0,1,\cdots} S^{*k}f$. Theorem \ref{backshift_invsub_genby_finvec0} shows that a vector $g \in H^2_{\mathbb C^n}$ belongs to $H^2_{\mathbb C^n}\ominus E^*_f $ if and only if $g \in \ker H_{[\overline{zf_1},\cdots, \overline{zf_n} ]}$. Therefore, by Theorem \ref{ker of block Hankel}, $E^*_f= H^2_{\mathbb C^n}$ if and only if $\{\overline{zf_1},\cdots, \overline{zf_n} \}$ is independent modulo Nevanlinna class.  It is easy that independence of $\{\overline{zf_1},\cdots, \overline{zf_n} \}$ is equivalent to independence of $\{\overline{f_1},\cdots, \overline{f_n} \}$(see (\ref{independency multiplied by diagonal matrix})). The proof is complete.
\end{proof}

\bigskip
Since the set of cyclic vectors for $S^*$ in $H^2_{\mathbb C^n}$ is nonempty as was mentioned before, Proposition \ref{cyclic vector and independency} immediately gives
\begin{corollary}
  For an arbitrary natural number $ n $, there exist $n$ (scalar-valued) functions in $L^2$ that are independent modulo Nevanlinna class.
\end{corollary}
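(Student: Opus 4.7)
The plan is to produce the $n$ functions by reading off the coordinates of a suitable vector in $H^2_{\mathbb C^n}$ and then translating a cyclicity statement into the desired independence via the proposition just proved. Concretely, I would argue as follows.

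First, I would invoke the classical fact (cited from Nikolskii, page 41) that the backward shift $S^*$ on $H^2_{\mathbb C^n}$ possesses cyclic vectors; in fact the set of such vectors is dense, but mere nonemptiness is all that is needed here. Fix any cyclic vector $f=(f_1,\ldots,f_n)^t \in H^2_{\mathbb C^n}$ for $S^*$, so that $\bigvee_{k\ge 0} S^{*k}f = H^2_{\mathbb C^n}$.

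Next, I would apply Proposition \ref{cyclic vector and independency} directly to $f$. That proposition asserts the equivalence
\[
\textstyle \bigvee_{k\ge 0} S^{*k}(f_1,\ldots,f_n)^t = H^2_{\mathbb C^n} \iff \{\overline{f_1},\ldots,\overline{f_n}\}\text{ is independent mod }\mathcal N.
\]
Since the left-hand side holds by the choice of $f$, the right-hand side gives the conclusion: the $n$ scalar-valued functions $\overline{f_1},\ldots,\overline{f_n} \in L^2$ are independent modulo the Nevanlinna class.

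There is essentially no obstacle here, since the two ingredients, namely the existence of a cyclic vector for $S^*$ on $H^2_{\mathbb C^n}$ and the equivalence in Proposition \ref{cyclic vector and independency}, have both already been established or cited. The corollary is really just the observation that combining them produces the required family. The only choice to make is whether to record the stronger density statement from Nikolskii; for the present corollary, plain nonemptiness suffices, so I would keep the proof to a couple of lines.
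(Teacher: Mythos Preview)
Your proposal is correct and mirrors the paper's own argument exactly: the paper also deduces the corollary immediately from the nonemptiness of the set of cyclic vectors for $S^*$ on $H^2_{\mathbb C^n}$ together with Proposition~\ref{cyclic vector and independency}. There is nothing to add.
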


\bigskip

The next theorem is the main result on finitely generated shift-invariant subspaces of $H^2_{\mathbb C^n}$. The proof of the theorem contains a subtle difficulty.

\begin{theorem}\label{shiftinvarsubsgeneratedby}
    For a finite set of vectors $A=\{f_1,\cdots f_s\}\subset H^2_{\mathbb C^n}$, define a matrix function $F:=[f_1,\cdots, f_s]\in H^2_{M_{n\times s}}$ and let $m:=\hbox{Rank}F$. Then $E_A = \Theta H^2_{\mathbb C^m}$ for some $n\times m$ matrix inner function $\Theta$.
\end{theorem}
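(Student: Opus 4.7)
The plan is to combine Beurling--Lax--Halmos with Lemma \ref{size of inner function}: we have $E_A=\Theta H^2_{\mathbb C^l}$ for some $l\le n$, and Lemma \ref{size of inner function} identifies
$$
l=\max\{\hbox{Rank}[g_1,\cdots,g_r]:g_i\in E_A,\ r\in\mathbb N\}.
$$
So it suffices to show this maximum equals $m=\hbox{Rank}\,F$. Recall that, by definition of $E_A$, any element $g\in E_A$ is an $H^2$-limit of elements of the form $Fp_k$, where the $p_k$ are polynomial vectors in $H^2_{\mathbb C^s}$.

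For the upper bound $l\le m$, I would pick an arbitrary finite collection $g_1,\dots,g_r\in E_A$ and, for each $g_i$, choose polynomial vectors $p_k^{(i)}$ with $Fp_k^{(i)}\to g_i$ in $L^2_{\mathbb C^n}$. After passing to a common subsequence converging almost everywhere on $\mathbb T$, we have $F(\zeta)p_k^{(i)}(\zeta)\to g_i(\zeta)$ a.e. Since $\hbox{range}(F(\zeta))\subset\mathbb C^n$ is finite-dimensional, hence closed, this forces $g_i(\zeta)\in\hbox{range}(F(\zeta))$ a.e. Therefore $\hbox{rank}[g_1(\zeta),\dots,g_r(\zeta)]\le\hbox{rank}\,F(\zeta)\le m$ for a.e.\ $\zeta$, so $\hbox{Rank}[g_1,\dots,g_r]\le m$.

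For the lower bound $l\ge m$, I would exhibit $m$ columns of $F$ whose combined matrix has $\hbox{Rank}=m$. Since $\hbox{Rank}\,F=m$, the set $\{\zeta:\hbox{rank}\,F(\zeta)=m\}$ has positive measure, hence some specific $m\times m$ minor of $F$, say one formed by rows in $J$ and columns in $I=\{i_1,\dots,i_m\}$, is nonzero on a set of positive measure. This minor is a polynomial expression in entries of $F\in H^2_{M_{n\times s}}$; in particular it lies in $H^{2/m}\subset\mathcal N$, and any bounded-type function that is nonzero on a set of positive measure is nonzero almost everywhere. Hence $\det F_{J,I}(\zeta)\ne 0$ a.e., which means the $n\times m$ matrix $[f_{i_1},\dots,f_{i_m}]$ has rank $m$ a.e. Since $f_{i_1},\dots,f_{i_m}\in A\subset E_A$, Lemma \ref{size of inner function} gives $l\ge m$.

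The main subtle point is the upper bound: one must pass from $L^2$-convergence $Fp_k\to g$ to the pointwise inclusion $g(\zeta)\in\hbox{range}\,F(\zeta)$ a.e., which requires extracting an a.e.\ convergent subsequence and invoking closedness of a finite-dimensional subspace. The lower bound rests on the familiar dichotomy that a bounded-type function either vanishes a.e.\ or vanishes on a null set, applied to the appropriate $m\times m$ minor of $F$.
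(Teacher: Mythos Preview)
Your argument is correct. The overall scaffolding (Beurling--Lax--Halmos plus Lemma \ref{size of inner function}) is the same as the paper's, and your lower bound is essentially the paper's, though you do more work than needed: since $f_1,\dots,f_s\in A\subset E_A$ and $\hbox{Rank}\,[f_1,\dots,f_s]=m$ \emph{by definition}, Lemma \ref{size of inner function} already gives $l\ge m$ without isolating a specific nonvanishing $m\times m$ minor. (Your bounded-type argument for the minor is fine, just superfluous.)

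The genuine difference is in the upper bound. The paper argues via determinants: writing $G=\lim_k F P_k$ entrywise in $L^2$, it takes an arbitrary $(m+1)\times(m+1)$ submatrix $G'$ of $G$, notes that the corresponding submatrices $G'_k$ of $FP_k$ have $\det G'_k=0$ a.e.\ (since $\hbox{rank}(F(\zeta)P_k(\zeta))\le m$), and then passes to an $L^q$-limit of these determinants to conclude $\det G'=0$. Your route is more direct and conceptual: extract a common a.e.-convergent subsequence of $(Fp_k^{(i)})_k$, and use closedness of the finite-dimensional subspace $\hbox{range}\,F(\zeta)$ to place each $g_i(\zeta)$ inside it. This avoids any discussion of minors or of $L^q$-continuity of the determinant, and yields the stronger pointwise statement $g_i(\zeta)\in\hbox{range}\,F(\zeta)$ a.e.\ directly. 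The paper's approach, on the other hand, stays within norm-convergence and never needs to pass to subsequences; it is a matter of taste which is cleaner.
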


\begin{proof}
Since $E_A$ is a shift invariant subspace, let $E_A = \Theta H^2_{\mathbb C^{m^\prime}}$ for some natural number $m^\prime$ and an $n\times m^\prime$ matrix inner function $\Theta$. What needs be proved is $m^\prime = m$.

Since $f_i \in E_A$ for each $i=1,\cdots,s$, using Lemma \ref{size of inner function}, we have $m^\prime \ge \hbox{Rank} F=m$.

To show the opposite inequality $m^\prime \le m$, Lemma \ref{size of inner function} will be used again.
Let $G:=[g_1, \cdots ,g_r]$ be an $n\times r$ matrix function, where, $r$ is an arbitrary natural number and $g_i\in E_A$ for each $i$. Referring to Lemma \ref{size of inner function}, it suffices to show that $\hbox{Rank}G \le \hbox{Rank}F=m$. If $\min\{n,r\} \le m$, then $\hbox{Rank}G \le m$ is trivially satisfied. Thus, now we assume $n,r \ge m +1$.
  By the definition of $E_A$, we notice that
  $$E_A = \hbox{cl} \{ F \cdot p \, \, | \, \,p \hbox{ is an analytic polynomial in }  \mathcal P _+({\mathbb C^s})\}.
  $$
 Since $g_i \in \hbox{cl} F\mathcal P_+(\mathbb C^s)$, there exists a sequence of polynomials $\{p_{ik}\}_{k=1}^\infty \subset \mathcal P_+(\mathbb C^s)$, such that $\lim_{k\rightarrow \infty}F\cdot p_{ik}=g_i$ for each $i=1,\cdots,r$, where, the convergence is in the $L^2_{\mathbb C^n}$-norm. Note that $L^2$-convergence of a sequence of vector-valued functions is equivalent to $L^2$-convergence of each coordinate functions. Therefore, in the sense of $L^2$-convergence of each entry function, $$G= \lim_{k\rightarrow \infty}F\cdot[p_{1k},\cdots, p_{rk}].$$ Write $P_k:= [p_{1k},\cdots, p_{rk}]$ for convenience.
  Note that
  $$\hbox{rank}( F(\zeta)\cdot P_k(\zeta)) \le \hbox{rank} F(\zeta).$$
    Let $G^\prime$ be an arbitrary $(m+1)\times(m+1)$ square submatrix of $G$ obtained by removing some columns and rows of $G$. Define $G_k^\prime = F^\prime P_k^\prime$, where $F^\prime$ is the submatrix of $F$ obtained by removing the corresponding rows as were removed from $G$ to form $ G^\prime$ and $P_k^\prime$ is the submatrix of $P_k$ obtained by removing the corresponding columns as were removed from $G$ to form $G^\prime$. Clearly, $$G^\prime= \lim_{k\rightarrow \infty}F^\prime P^\prime_k=\lim_{k\rightarrow \infty}G_k^\prime$$ in the entrywise $L^2$-norm. Observe
    \begin{equation}\label{rank of submatrix}
      \hbox{rank}G^\prime_k(\zeta)= \hbox{rank}F^\prime(\zeta)\le \hbox{rank} F(\zeta) \le m
    \end{equation}
    for almost all $\zeta \in \mathbb T$, where the last inequality is from our assumption $\hbox{Rank}F=m$. Note that, since each entry function of $G^\prime$ and $G^\prime_k$ belongs to $L^2(\mathbb T)$, there exist a positive number $q$ such that $\det G^\prime , \det G^\prime_k \in L^q(\mathbb T)$ and $\det G^\prime_k$ converges to $\det G^\prime$ in $L^q$-norm.
     But, since $G^\prime_k$ is an $(m+1)\times(m+1)$ square matrix satisfying (\ref{rank of submatrix}), we have $\det G^\prime_k(\zeta)=0$ almost everywhere on $\mathbb T$, that is, $\det G^\prime =0$, a zero function. Since the only $L^q$-limit of the sequence of zero functions is the zero function itself, we conclude that $\det G^\prime=0$, a zero function. Recall that $G^\prime$ is an arbitrary $(m+1)\times (m+1)$ square submatrix of $G$. Therefore, we conclude that $\hbox{rank} G(\zeta) \le m$ almost everywhere on $\mathbb T$, that is, $\hbox{Rank} G \le m$. Since $G$ is an arbitrary finite matrix whose columns are in $E_A= \Theta H^2_{\mathbb C^{m^\prime}}$, using Lemma \ref{size of inner function}, we have $m^\prime \le m$.
 The proof is complete.
\end{proof}

\bigskip
\begin{remark}(Inner-outer factorization of matrix-valued analytic functions)\\
 For a matrix-valued analytic function $F \in H^2_{M_{n\times m}}$, let $F = \Theta G$ be the inner-outer factorization. Theorem \ref{shiftinvarsubsgeneratedby} can be used to find the size of the matrix-valued functions $\Theta$ and $G$. Suppose $\Theta \in H^\infty_{M_{n\times r}}$ and $G\in H^2_{M_{r\times m}}$ for a natural number $r$, then
 $$
 FH^2_{\mathbb C^m}= \Theta G H^2_{\mathbb C^m}= \Theta H^2_{\mathbb C^r}$$
  since $G$ is an outer function. Since $F H^2_{\mathbb C^m}$ is the invariant subspace generated by the columns of $F$, Theorem \ref{shiftinvarsubsgeneratedby} implies $r=\hbox{Rank}F$.
\end{remark}

\bigskip

\begin{example}
  Let $F=\left[
           \begin{array}{cccc}
             1 & 1 & z & z^2 \\
             1 & z & z^2 & z^3 \\
             1 & 0 & 0 & 0 \\
           \end{array}
         \right],
  $ then one can verify $\hbox{Rank}F =2$. Thus, if $F=\Theta G$ is the inner-outer factorization, then $\Theta \in H^\infty_{M_{3\times2}}$ and $G \in H^2_{M_{2\times 4}}$.
\end{example}

\bigskip

Let $A$ be an index set and let $\Theta_i\in H^\infty_{M_{n\times m_i}}$ be matrix inner functions for $i\in A$.  Since $\Theta_i H^2_{\mathbb C^{m_i}}$ is an invariant subspace of $H^2_{\mathbb C^n}$ for the shift operator $S$  for each $i\in A$, it is clear that $\bigvee_{i \in A} \Theta_i H^2_{\mathbb C^{m_i}}$ and $ \bigcap_{i\in A} \Theta_iH^2_{\mathbb C^{m_i}}$ are invariant subspaces for $S$ as well. Therefore, by Beurling-Lax-Halmos Theorem, there exist natural number $l, l^\prime$ $ (\le n)$ and matrix inner functions $\Theta\in H^\infty_{M_{n\times l}}$, $ \Theta^\prime \in H^\infty_{M_{n\times l^\prime}}$ such that
$$
\bigvee_{i \in A} \Theta_i H^2_{\mathbb C^{m_i}}=\Theta H^2_{\mathbb C^l} \hbox{ and } \bigcap_{i\in A} \Theta_iH^2_{\mathbb C^{m_i}} = \Theta^\prime H^2_{\mathbb C^{l^\prime}}.$$ These matrix inner functions $\Theta$  and $\Theta^\prime $ are defined to be the \textit{greatest common divisor} (GCD) and the \textit{least common multiple} (LCM) of the family of inner functions $\{\Theta_i | i\in A \}$, respectively.

\begin{theorem}\label{lcm}
For matrix inner functions $\Theta_i \in H^\infty_{M_{n\times m_i}}$ (for $i= 1,\cdots, r$), let $\Theta \in H^\infty_{M_{n\times l}}$ be $LCM \{\Theta_1,\cdots, \Theta_r\}$. Then
$$ n -\sum_{i=1}^r(n-m_1) \le l \le \min\{m_1, \cdots, m_r\}.
$$
\end{theorem}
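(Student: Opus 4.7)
My plan is to handle the two inequalities separately. The upper bound $l \le \min\{m_1, \ldots, m_r\}$ is the easier half: from $\Theta H^2_{\mathbb C^l} \subseteq \Theta_i H^2_{\mathbb C^{m_i}}$ we obtain a factorization $\Theta = \Theta_i A_i$ for some $A_i \in H^2_{M_{m_i \times l}}$, and then $\Theta^*\Theta = I$, $\Theta_i^*\Theta_i = I$ give $A_i^* A_i = I$. So $A_i$ is itself an inner function of size $m_i \times l$, which forces $m_i \ge l$.

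For the lower bound, the strategy is to realize the LCM as a single Hankel kernel and invoke Theorem \ref{ker of block Hankel}. By Corollary \ref{gen set cor}, for each $i$ there exists $\Phi_i \in L^2_{M_{r_i \times n}}$ (with $r_i \le n$) such that $\ker H_{\Phi_i} = \Theta_i H^2_{\mathbb C^{m_i}}$. Stacking the $\Phi_i$ vertically produces $\Phi \in L^2_{M_{R \times n}}$, where $R := \sum_{i=1}^r r_i$. Since $\Phi f = (\Phi_1 f, \ldots, \Phi_r f)^t$ lies in $H^1_{\mathbb C^R}$ if and only if each $\Phi_i f \in H^1_{\mathbb C^{r_i}}$, it follows that
\[
\Theta H^2_{\mathbb C^l} \;=\; \bigcap_{i=1}^r \Theta_i H^2_{\mathbb C^{m_i}} \;=\; \bigcap_{i=1}^r \ker H_{\Phi_i} \;=\; \ker H_{\Phi}.
\]
Applying Theorem \ref{ker of block Hankel} to $\Phi$ (which has $n$ columns) gives $l = n - \alpha$ with $\alpha := \hbox{ind}_{\mathcal N}(\hbox{columns of }\Phi)$, while the same theorem applied to each $\Phi_i$ gives $m_i = n - \alpha_i$ with $\alpha_i := \hbox{ind}_{\mathcal N}(\hbox{columns of }\Phi_i)$. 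The desired lower bound therefore reduces to the subadditivity inequality $\alpha \le \sum_i \alpha_i$.

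This subadditivity is a dimension-counting argument. The $k$-th column of $\Phi$ is the vertical concatenation $\phi_k = (\phi_k^{(1)}, \ldots, \phi_k^{(r)})^t$ of the $k$-th columns of the $\Phi_i$, and for $c_1, \ldots, c_s \in \mathcal N$, the relation $\sum_j c_j \phi_{k_j} \in \mathcal N_{\mathbb C^R}$ is equivalent to $\sum_j c_j \phi_{k_j}^{(i)} \in \mathcal N_{\mathbb C^{r_i}}$ holding for every $i$ simultaneously. Consequently, if $\phi_{k_1}, \ldots, \phi_{k_s}$ are independent mod $\mathcal N$, the coefficient map
\[
\mathcal N^s \;\longrightarrow\; \bigoplus_{i=1}^r \bigl(\hbox{span}_{\mathcal N}\{\phi_{k_j}^{(i)}: 1\le j\le s\}\;\bmod\;\mathcal N_{\mathbb C^{r_i}}\bigr), \quad (c_j) \mapsto \Bigl(\sum_j c_j \phi_{k_j}^{(i)}\Bigr)_{i=1}^r
\]
is injective, and since the $i$-th summand has $\mathcal N$-dimension at most $\alpha_i$, this forces $s \le \sum_i \alpha_i$.

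The main obstacle is phrasing this dimension count cleanly within the paper's framework, which does not formally set up ``vector spaces over $\mathcal N$''. The workaround is to argue via maximal independent subsets: fix a maximal $\mathcal N$-independent subset $S_i$ of the columns of $\Phi_i$ for each $i$, of size $\alpha_i$ by Corollary \ref{fixedness of order of max indep set}; then any family of $s > \sum_i \alpha_i$ columns of $\Phi$ must, by the matroid-type property already established, admit a nontrivial simultaneous $\mathcal N$-relation in every block, contradicting independence mod $\mathcal N$.
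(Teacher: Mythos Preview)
Your argument is correct. For the lower bound the architecture matches the paper's exactly: realize each $\Theta_i H^2_{\mathbb C^{m_i}}$ as $\ker H_{\Phi_i}$ via Corollary~\ref{gen set cor}, stack the $\Phi_i$ vertically into $\Phi$, identify $\ker H_\Phi$ with $\Theta H^2_{\mathbb C^l}$, and apply Theorem~\ref{ker of block Hankel}. The paper simply asserts the subadditivity $\hbox{ind}_{\mathcal N}(\hbox{columns of }\Phi) \le \sum_i (n-m_i)$ ``by construction,'' without further detail; your dimension count over $\mathcal N$ supplies that step. Note that this count is already rigorous as written, because $\mathcal N$ is a genuine field (closed under sums, products, and inverses of nonzero elements), so rank--nullity applies directly; your final ``workaround'' paragraph via maximal independent subsets is therefore unnecessary, and as stated it is vaguer than the linear-algebra argument it is meant to replace.

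For the upper bound you take a different route from the paper. The paper extracts $l \le \min_i m_i$ from the same stacked $\Phi$, by observing that $\hbox{ind}_{\mathcal N}(\hbox{columns of }\Phi) \ge \hbox{ind}_{\mathcal N}(\hbox{columns of }\Phi_i) = n - m_i$ for each $i$ (a dependence among stacked vectors forces a dependence in every block). Your factorization argument---$\Theta = \Theta_i A_i$ with $A_i^* A_i = I_l$, hence $A_i$ inner and $m_i \ge l$---bypasses the Hankel machinery entirely for this half and is more self-contained; the paper's version has the virtue of getting both inequalities from a single construction.
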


\begin{proof}
By Corollary \ref{gen set cor}, for each $i=1,\cdots, r$, there exists a natural number $s_i\, (\le n)$ and an $s_i\times n$ matrix function $\Phi_i \in L^2_{M_{s_i \times n}}$ satisfying $\ker H_{\Phi_i}=\Theta_i H^2_{\mathbb C^{m_i}}$($i=1,\cdots, r$). Note, by Theorem \ref{ker of block Hankel}, that $\Phi_i$ has $n-m_i$ independent columns modulo Nevanlinna class.
Define a matrix function $\Phi:=\left[
                                              \begin{array}{c}
                                               \Phi_1 \\
                                                \vdots\\
                                               \Phi_r\\
                                              \end{array}
                                            \right]
$, then it is clear that
$$
\begin{array}{ll}
 \ker H_{\Phi}& =\ker H_{\Phi_1}\cap\cdots \cap \ker H_{\Phi_r} \\
   & = \Theta_1 H^2_{\mathbb C^{m_1}}\cap\cdots \cap \Theta_r H^r_{\mathbb C^{m_r}} \\
   & = \Theta H^2_{\mathbb C^{l}}
\end{array}.
$$
By the construction of $\Phi$, the maximum number of independent columns of $\Phi$ is at most $\Sigma_{i=1}^r(n-m_i)$ and at least $\max \{n-m_1,\cdots, n-m_r\}$. Therefore, using Theorem \ref{ker of block Hankel}, we conclude that $$n -\sum_{i=1}^r(n-m_1) \le l \le n-\max\{n-m_1,\cdots, n-m_r\}= \min\{m_1,\cdots, m_r\},$$ as desired.
The proof is complete.
\end{proof}

\bigskip
As an immediate consequence of the above theorem, we have
\begin{corollary}
  If $\Theta_1$ and $\Theta_2$ be matrix inner functions of size $(n\times n)$ and $(n\times m)$, respectively, then $\Theta:=LCM\{\Theta_1,\Theta_2\}$ is an $n\times m$ matrix (inner) function.
\end{corollary}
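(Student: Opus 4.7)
The plan is to apply Theorem \ref{lcm} directly with $r=2$, $m_1=n$, and $m_2=m$. Setting these values into the two-sided estimate
$$ n -\sum_{i=1}^r(n-m_i) \le l \le \min\{m_1, \cdots, m_r\}, $$
the lower bound becomes $n-(n-n)-(n-m)=m$, while the upper bound becomes $\min\{n,m\}=m$ (recalling that for $\Theta_2$ to be inner we need $m\le n$). Hence both bounds coincide at $m$, forcing $l=m$, which is exactly the claim.

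So the only step is to verify that Theorem \ref{lcm} applies and to compute the bounds. Concretely, I would first note that $\Theta_1 H^2_{\mathbb{C}^n}$ and $\Theta_2 H^2_{\mathbb{C}^m}$ are shift-invariant subspaces of $H^2_{\mathbb{C}^n}$, so by Beurling-Lax-Halmos their intersection $\Theta_1 H^2_{\mathbb{C}^n}\cap \Theta_2 H^2_{\mathbb{C}^m}$ has the form $\Theta H^2_{\mathbb{C}^l}$ for some $n\times l$ matrix inner function $\Theta=LCM\{\Theta_1,\Theta_2\}$. Then Theorem \ref{lcm} gives $l=m$ by the above computation.

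I do not foresee any real obstacle: the corollary is just the case of Theorem \ref{lcm} where one of the two inner functions is square and thereby contributes nothing to the slack in the lower bound. One small point worth recording, however, is that the square factor $\Theta_1$ is essential for pinning down the size: because $n-m_1=0$, the lower estimate in Theorem \ref{lcm} is not diluted, and it forces $l$ up to meet the upper bound $\min\{n,m\}=m$. Without a square inner divisor in the family, one would only obtain the weaker inequality $l\le m$ from Theorem \ref{lcm}.
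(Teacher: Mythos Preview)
Your proposal is correct and matches the paper's approach exactly: the paper states this corollary as ``an immediate consequence of the above theorem'' without further proof, and your computation of the two bounds from Theorem~\ref{lcm} with $r=2$, $m_1=n$, $m_2=m$ is precisely the intended verification.
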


\bigskip

For GCD's of matrix inner functions, we have

\begin{theorem}\label{gcd}
 For matrix inner functions $\Theta_i \in H^\infty_{M_{n\times m_i}}$ for $i=1,\cdots r$, let $\Theta \in H^\infty_{M_{n\times l}}$ be $GCD\{\Theta_1,\cdots, \Theta_r\}$. Then
$$
\max\{ m_1,\cdots,m_r\} \le l \le \Sigma_{i=1}^{r}m_i.
$$
\end{theorem}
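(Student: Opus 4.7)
The plan is to handle the two inequalities separately, the lower bound by a factorization argument using Beurling-Lax-Halmos, and the upper bound by applying Theorem \ref{shiftinvarsubsgeneratedby} to the matrix formed by stacking the $\Theta_i$'s side by side.

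For the lower bound $l \ge \max\{m_1,\dots,m_r\}$: since $\Theta$ is the GCD, for each $i$ we have $\Theta_i H^2_{\mathbb C^{m_i}}\subset \Theta H^2_{\mathbb C^l}$. Applying Beurling-Lax-Halmos to the inclusion, one obtains $A_i\in H^\infty_{M_{l\times m_i}}$ with $\Theta_i = \Theta A_i$. Using $\Theta^*\Theta = I_l$ and the inner property $\Theta_i^*\Theta_i = I_{m_i}$, we get $A_i^*A_i = A_i^*\Theta^*\Theta A_i = \Theta_i^*\Theta_i = I_{m_i}$ a.e.\ on $\mathbb T$, so $A_i$ is itself a matrix inner function. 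As was remarked earlier in the paper, a matrix inner function in $H^\infty_{M_{l\times m_i}}$ requires $l\ge m_i$, and taking the maximum over $i$ yields the lower bound.

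For the upper bound $l \le \sum_{i=1}^r m_i$: consider the block matrix
\[
F := [\Theta_1\ \Theta_2\ \cdots\ \Theta_r] \in H^\infty_{M_{n\times (m_1+\cdots+m_r)}},
\]
and let $A$ be the set consisting of all columns of all $\Theta_i$'s. For each $i$, the shift-invariant subspace generated by the columns of $\Theta_i$ is exactly $\Theta_i H^2_{\mathbb C^{m_i}}$ (this is the closure of $\Theta_i\mathcal P_+(\mathbb C^{m_i})$, which equals $\Theta_i H^2_{\mathbb C^{m_i}}$ since $\Theta_i$ is inner and hence isometric). Consequently,
\[
E_A = \bigvee_{i=1}^{r} \Theta_i H^2_{\mathbb C^{m_i}} = \Theta H^2_{\mathbb C^l}.
\]
By Theorem \ref{shiftinvarsubsgeneratedby}, the size $l$ of the matrix inner function representing $E_A$ equals $\hbox{Rank} F$. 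Since $F$ has only $m_1+\cdots+m_r$ columns, $\hbox{rank} F(\zeta)\le \sum_{i=1}^r m_i$ pointwise a.e., hence $\hbox{Rank} F \le \sum_{i=1}^r m_i$. Combining these gives $l\le \sum_{i=1}^r m_i$, completing the proof.

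The main subtlety is in the lower bound, where one must verify that the cofactor $A_i$ coming from Beurling-Lax-Halmos is again inner; this is what translates the set-theoretic containment $\Theta_i H^2_{\mathbb C^{m_i}} \subset \Theta H^2_{\mathbb C^l}$ into the dimensional inequality $m_i \le l$. The upper bound, by contrast, is essentially a bookkeeping consequence of Theorem \ref{shiftinvarsubsgeneratedby} applied to the horizontally concatenated inner functions.
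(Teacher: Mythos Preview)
Your proof is correct. For the upper bound you and the paper proceed identically: identify $\bigvee_i \Theta_i H^2_{\mathbb C^{m_i}}$ with $E_A$ for $A$ the set of all columns of the $\Theta_i$, invoke Theorem~\ref{shiftinvarsubsgeneratedby} to get $l=\hbox{Rank}\,[\Theta_1,\ldots,\Theta_r]$, and bound this by the total number $\sum m_i$ of columns.

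For the lower bound the two arguments diverge. The paper stays inside the rank framework: having already established $l=\hbox{Rank}\,[\Theta_1,\ldots,\Theta_r]$, it uses the pointwise inequality $\hbox{rank}\,\Theta_i(\zeta)\le \hbox{rank}\,[\Theta_1(\zeta),\ldots,\Theta_r(\zeta)]$ together with $\hbox{rank}\,\Theta_i(\zeta)=m_i$ a.e.\ (since $\Theta_i$ is inner) to get $m_i\le l$. You instead argue directly from the inclusion $\Theta_i H^2_{\mathbb C^{m_i}}\subset\Theta H^2_{\mathbb C^l}$, factor $\Theta_i=\Theta A_i$, and show $A_i$ is inner, forcing $l\ge m_i$. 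Your route is self-contained and does not rely on Theorem~\ref{shiftinvarsubsgeneratedby} for that half of the inequality; the paper's route has the aesthetic advantage of reading both bounds off the single identity $l=\hbox{Rank}\,[\Theta_1,\ldots,\Theta_r]$. One small remark: the factorization you invoke is not literally the statement of Beurling--Lax--Halmos but the standard consequence that each column $\Theta_i e_j$ lies in $\Theta H^2_{\mathbb C^l}$, giving $A_i\in H^2_{M_{l\times m_i}}$; boundedness of $A_i$ then follows from $A_i^*A_i=I_{m_i}$, which you verify.
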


\begin{proof}
It is a simple observation that $\Theta_i H^2_{\mathbb C^{m_i}}$ is the shift invariant subspace generated by the columns of $\Theta_i$.
Therefore, $\bigvee_{i=1,\cdots, r}\Theta_i H^2_{\mathbb C^{m_i}}$ is the shift invariant subspace of $H^2_{\mathbb C^n}$ generated by the column vectors of $\Theta_1,\cdots,\Theta_r$.
Thus by Theorem \ref{shiftinvarsubsgeneratedby},
\begin{equation}\label{rank}
l=\hbox{Rank}[\Theta_1,\cdots, \Theta_r]= \hbox{ess} \sup_{\zeta \in \mathbb T}\hbox{rank}[\Theta_1(\zeta),\cdots, \Theta_r(\zeta)].
\end{equation}

Observe
\begin{equation}\label{rank1}
  \hbox{rank}\Theta_i(\zeta) \le \hbox{rank}[\Theta_1(\zeta),\cdots,  \Theta_r(\zeta)] \le \hbox{rank}\Theta_1(\zeta) +\cdots + \hbox{rank}\Theta_r(\zeta)
\end{equation} for each $\zeta \in \mathbb T$. By taking the essential supremum of (\ref{rank1}), we have
\begin{equation}\label{rank2}
  \hbox{Rank}\Theta_i \le \hbox{Rank}[\Theta_1,\cdots,  \Theta_r] \le \hbox{ess}\sup_{\zeta \in \mathbb T}(\hbox{rank}\Theta_1(\zeta) +\cdots + \hbox{rank}\Theta_r(\zeta)).
\end{equation}
Note the following is clear :
\begin{equation}\label{rank3}
  \hbox{ess}\sup_{\zeta \in \mathbb T}(\hbox{rank}\Theta_1(\zeta) +\cdots + \hbox{rank}\Theta_r(\zeta))\le
\hbox{Rank}\Theta_1 +\cdots + \hbox{Rank}\Theta_r.
\end{equation}
By (\ref{rank}), (\ref{rank2}) and (\ref{rank3}), we have
\begin{equation}\label{Rank ineq}
 m_i \le l \le\Sigma_{i=1}^r m_i.
\end{equation}
Since (\ref{Rank ineq}) holds for each $i=1,\cdots,r,$
we have $$\max\{m_1,\cdots,m_r\}\le l \le \Sigma_{i=1}^r m_i,$$
as desired.
\end{proof}

\begin{corollary}
  If $\Theta_1$ and $\Theta_2$ be matrix inner functions of size $(n\times n)$ and $(n\times m)$, respectively, then $\Theta:=GCD\{\Theta_1, \Theta_2\}$ is an $n\times n$ matrix (inner) function.
\end{corollary}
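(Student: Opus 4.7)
The plan is to derive the result as a direct corollary of Theorem \ref{gcd} combined with the basic dimensional constraint that any matrix inner function $\Theta \in H^\infty_{M_{n \times l}}$ must satisfy $l \le n$ (mentioned earlier in the paper, right after the definition of inner functions).

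First I would apply Theorem \ref{gcd} with $r = 2$, $m_1 = n$ and $m_2 = m$, where necessarily $m \le n$ since $\Theta_2$ is an $n \times m$ inner function. The theorem gives
\[
\max\{n, m\} \le l \le n + m,
\]
so in particular $l \ge n$. Next I would invoke the fact that $\Theta = GCD\{\Theta_1, \Theta_2\}$ is itself a matrix inner function in $H^\infty_{M_{n \times l}}$, and the definition of inner function forces $l \le n$. Combining the two inequalities yields $l = n$, which is the desired conclusion.

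I do not expect any genuine obstacle here; the work has already been done in Theorem \ref{gcd}, and the corollary amounts to pinning down the inequality at its upper endpoint using the size constraint on inner functions. The only small point worth stating explicitly in the write-up is the observation that the hypothesis $m \le n$ is automatic from the inner-ness of $\Theta_2$, so that $\max\{n, m\} = n$.
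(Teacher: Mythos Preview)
Your proposal is correct and matches the paper's own proof essentially line for line: apply Theorem \ref{gcd} with $m_1=n$, $m_2=m$ to get $n=\max\{n,m\}\le l$, then use the size constraint $l\le n$ for inner functions to conclude $l=n$.
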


\begin{proof}
Let $\Theta = H^\infty_{M_{n\times l}}$, then by Theorem \ref{gcd}, $n= \max\{n, m\}\le l \le n+m$. But, since $\Theta$ is an inner function, $l \le n$. Thus, $l=n.$
\end{proof}

\bigskip
We conclude the paper with an example.
\begin{example}
  Let $\Phi=\left[
               \begin{array}{ccc}
                 a_1 & 0&0 \\
                 0 & \overline {\theta_1}&0 \\
                 0&0& \overline {\theta_2}\\
               \end{array}
             \right]
  $
  and $\Psi=\left[
               \begin{array}{ccc}
               a_2 &  a_2& 0\\
               0 & \overline {\theta_3} & 0\\
               0&0&\overline {\theta_4}\\
               \end{array}
             \right]
  $, where $a_1, a_2 \in L^2\setminus \mathcal N^2$ and $\theta_1, \cdots, \theta_4$ are inner functions. A simple calculation shows that $$\ker H_{\Phi}=\{(0, \theta_1 g_1, \theta_2 h_1)^t | g_1, h_1 \in H^2 \}\hbox{ and } \ker H_{\Psi}= \{(-\theta_3 g_2, \theta_3 g_2, \theta_4 h_2)^t | g_2, h_2\in H^2\}.$$

  Using matrix inner functions, $$\ker H_{\Phi}=\Theta_1 H^2_{\mathbb C^2} ,\hbox{ where,
} \Theta_1= \left[
\begin{array}{cc}
 0 & 0 \\
 \theta_1 & 0 \\
 0 & \theta_2 \\
 \end{array}
 \right]
$$ and $$\ker H_{\Psi}=\Theta_2 H^2_{\mathbb C^2},\hbox{ where,
}\Theta_2 = \left[
\begin{array}{cc}
-\frac{1}{\sqrt 2}\theta_3 & 0 \\
\frac{1}{\sqrt 2}\theta_3\ & 0 \\
0 & \theta_4 \\
  \end{array}
  \right]
.$$ Let $\varphi_i$ and $\psi_i$ be the column vectors of $\Phi$ and $\Psi$, respectively.
Then one can easily verify $\hbox{ind}_{\mathcal N}\{\varphi_1,\varphi_2,\varphi_3\}=\hbox{ind}_{\mathcal N}\{\psi_1,\psi_2,\psi_3\}=1.$
Therefore the sizes of the inner matrix functions $\Theta_1$ and $\Theta_2$ are in accordance with Theorem \ref{ker of block Hankel}.
 Let $\Theta:= LCM\{\Theta_1,\Theta_2\}$ and $\Theta^\prime := GCD\{\Theta_1,\Theta_2\}$. To find $\Theta$, we need to figure out the set $\ker H_\Phi \cap \ker H_\Psi(=\Theta_1 H^2_{\mathbb C^2}\cap \Theta_2 H^2_{\mathbb C^2})$. By analyzing the equation $(0,\theta_1g_1, \theta_2h_1)^t=(-\theta_3g_2, \theta_3 g_2, \theta_4 h_2)^t$, we find that $$\ker H_\Phi \cap \ker H_\Psi = \{(0,0, \theta h)^t | h\in H^2\}=\left[
                                                 \begin{array}{c}
                                                   0 \\
                                                   0 \\
                                                   \theta \\
                                                 \end{array}
                                               \right] H^2,$$
 where, $\theta:= LCM\{\theta_2,\theta_4\}$. Hence, $\Theta= \left[
                                                 \begin{array}{c}
                                                   0 \\
                                                   0 \\
                                                   \theta \\
                                                 \end{array}
                                               \right]
$. Note that if we set $\Omega:=\left[
                                  \begin{array}{c}
                                    \Phi \\
                                    \Psi \\
                                  \end{array}
                                \right]
$, then one can verify that the independency(mod $\mathcal N$) of the columns of $\Omega$ is $2$ and $\ker H_\Omega = \ker H_\Phi \cap \ker H_\Psi = \Theta H^2$, which also is in accordance with Theorem \ref{ker of block Hankel}.

To discuss the size of $\Theta^\prime=\hbox{GCD}\{\Theta_1,\Theta_2\}$, note that $\Theta_1 H^2_{\mathbb C ^2}\bigvee \Theta_2 H^2_{\mathbb C^2}$ is the shift-invariant subspace of $H^2_{\mathbb C^3}$ generated by the columns of $\Theta_1$ and $\Theta_2$. It is easy to verify $\hbox{Rank}[\Theta_1, \Theta_2]=3$ because the first three columns of $[\Theta_1, \Theta_2]$ has a nonzero determinant. Thus, by Theorem \ref{shiftinvarsubsgeneratedby}, we can conclude that $\Theta^\prime$ is a $3\times 3$ square matrix inner function.
\end{example}

\bigskip

Dong-O Kang

Department of Mathematics, Chungnam National University, Daejeon, 34134, Korea

E-mail: dokang@cnu.ac.kr

\bigskip

\thanks{This work was supported by Basic Science Research Program through the National Research Foundation of Korea(NRF) funded by the Ministry of Science, ICT and Future Planning(NRF-2015R1C1A1A01053837).}

Mathematics Subject Classification (2010). 47B35, 47A05

\end{document}